\newcommand{\RR}{\mathbb{R}}
\numberwithin{equation}{section}
\newcommand{\coloneqq}{:=}
\DeclareMathOperator{\supp}{supp}
\DeclareMathOperator{\dist}{dist}
\DeclareMathOperator{\Spec}{spec}
\DeclareMathOperator{\Specess}{spec_\mathrm{ess}}
\DeclareMathOperator{\Specdisc}{spec_\mathrm{disc}}
\DeclareMathOperator{\Ker}{Ker}
\DeclareMathOperator{\Ran}{Ran}
\renewcommand{\tilde}{\widetilde}
\newtheorem{theorem}{Theorem}[section]
\newtheorem{proposition}[theorem]{Proposition}
\newtheorem{lemma}[theorem]{Lemma}
\newtheorem{Corollaires}[theorem]{Corollary}
\theoremstyle{definition}
\begin{document}
%
%

\title[Eigenvalues of Robin Laplacians in infinite sectors]{Eigenvalues of Robin Laplacians in infinite sectors}

\author{Magda Khalile}
\address{Laboratoire de Math\'ematiques d'Orsay, Univ.~Paris-Sud, CNRS, Universit\'e Paris-Saclay, 91405 Orsay, France}
\author{Konstantin Pankrashkin}

\address{Laboratoire de Math\'ematiques d'Orsay, Univ.~Paris-Sud, CNRS, Universit\'e Paris-Saclay, 91405 Orsay, France \&
Laboratoire Poems, INRIA, ENSTA ParisTech, 828, Boulevard des Mar\'echaux, 91762 Palaiseau, France}

\begin{abstract}

For $\alpha\in(0,\pi)$, let $U_\alpha$ denote the infinite planar sector of opening
$2\alpha$,
\[
U_\alpha=\big\{
(x_1,x_2)\in\mathbb R^2:
\big|\arg(x_1+ix_2) \big|<\alpha
\big\},
\]
and $T^\gamma_\alpha$ be the Laplacian in $L^2(U_\alpha)$,
$T^\gamma_\alpha u= -\Delta u$, with the Robin boundary condition
$\partial_\nu u=\gamma u$, where
 $\partial_\nu$ stands for the outer normal derivative and $\gamma>0$.
The essential spectrum of $T^\gamma_\alpha$ does not depend on the angle $\alpha$
and equals $[-\gamma^2,+\infty)$, and the discrete spectrum is non-empty
iff $\alpha<\frac\pi 2$. In this case we show that the discrete spectrum
is always finite and that each individual eigenvalue
is a continous strictly increasing function of the angle $\alpha$.
In particular, there is just one discrete eigenvalue
for $\alpha \ge \frac{\pi}{6}$. 
As $\alpha$ approaches $0$, the number of discrete eigenvalues
becomes arbitrary large and is minorated by $\kappa/\alpha$ with a suitable $\kappa>0$,
and the $n$th eigenvalue $E_n(T^\gamma_\alpha)$ of $T^\gamma_\alpha$ behaves as
\[
E_n(T^\gamma_\alpha)=-\dfrac{\gamma^2}{(2n-1)^2 \alpha^2}+O(1)
\]
and admits a full asymptotic expansion in powers of $\alpha^2$.
The eigenfunctions are exponentially localized near the origin.
The results are also applied to $\delta$-interactions on star graphs.
\end{abstract}

\maketitle

\section{Introduction}

For $\alpha\in(0,\pi)$, let $U_\alpha$ denote the infinite sector of opening
$2\alpha$,
\[
U_\alpha=\big\{
(x_1,x_2)\in\mathbb R^2:
\big|\arg(x_1+ix_2) \big|<\alpha
\big\}.
\]
In the present paper, we are interested in the spectral properties
of the associated Robin Laplacian, to be denoted $T^\gamma_\alpha$, which is defined
as follows: for $\gamma>0$, the operator
$T^\gamma_\alpha$ acts in $L^2(U_\alpha)$ as
\[
u\mapsto -\Delta u:=-\Big(
\dfrac{\partial^2}{\partial x_1^2}
+
\dfrac{\partial^2}{\partial x_2^2}
\Big)u,
\]
on the functions $u$ satisfying the boundary condition
\[
\dfrac{\partial u}{\partial \nu}=\gamma u \text{ at } \partial U_\alpha,
\]
where $\nu$ stands for the outer unit normal. More precisely, $T^\gamma_\alpha$
is defined as the unique self-adjoint operator in $L^2(U_\alpha)$
corresponding to the sesquilinear form
\begin{equation}
      \label{talph}
t^\gamma_\alpha(u,u)=\int_{U_\alpha} |\nabla u|^2 dx_1 dx_2 -\gamma\int_{\partial U_\alpha} |u|^2 ds, \quad u\in H^1(U_\alpha),
\end{equation}
where $ds$ is the one-dimensional Hausdorff measure.

During the last years, the spectral analysis of Robin Laplacians attracted
a considerable attention. As shown in \cite{lacey, ref13}, the spectral properties
of such operators are sensible to the regularity of the boundary,
and the case of smooth domains has been studied in detail~\cite{hk,hkr,pp15,2d,pp16}.
On the other hand, only partial results are available
for domains with a non-smooth boundary, cf. \cite{bp16,ref2,conv}.

The study of the above operator $T^\gamma_\alpha$ has several interesting aspects
from the point of view of the existing results.
First, it can be viewed as the simplest non-smooth domain in two dimensions
and depending in an explicit way on the single geometric parameter $\alpha$.
Second, its spectral properties play an important role in the study
of more general non-smooth domains in the strong coupling limit, see \cite{bp16,clr,ref13},
as the sectors $U_\alpha$ exhaust the whole family of possible tangent cones to the boundary
in two dimensions. Third, the domain $U_\alpha$ and its boundary are non-compact,
which may potentially lead to quite unusual spectral properties,
such as the presence of an infinite discrete spectrum, and the respective
studies in higher dimensions \cite{ref14} do not extend directly to the planar
case: we will see below that the results are actually quite different.
In fact, the only spectral result on $T^\gamma_\alpha$ available
in the existing literature is as follows, see \cite{ref13}:
\begin{equation}
     \label{eq-tga}
\inf \Spec T^\gamma_\alpha=\begin{cases}
-\gamma^2, & \alpha \ge \dfrac{\pi}{2},\\
-\dfrac{\gamma^2}{\sin^2\alpha}, & \alpha < \dfrac{\pi}{2},
\end{cases}
\end{equation}
and for $\alpha<\frac{\pi}{2}$ the value indicated is an eigenvalue
with an explicitly known eigenfunction $\exp(-\gamma x_1/\sin \alpha)$.
The aim of the present work is to provide a more detailed
spectral analysis.

Our main results are as follows. First, as the essential spectrum of
$T^\gamma_\alpha$ does not depend on the angle $\alpha$
and equals $[-\gamma^2,+\infty)$, see Theorem~\ref{SPEC},
it follows  from \eqref{eq-tga} that the discrete spectrum is non-empty
if and only if $\alpha<\frac{\pi}{2}$, i.e. if and only
if the sector is strictly
smaller than the half-plane.
(It is worth noting that a similar geometric effect
appears for other classes of differential operators, see e.g. \cite{babich,kamot}.)
In Theorem~\ref{thm31} we show that the discrete spectrum
is always finite, which is a non-trivial result
due to the non-compactness of the boundary. In subsection~\ref{monotone}
we obtain more detailed results: in Theorem~\ref{increase}
we show that each individual eigenvalue
is a strictly increasing continuous function of the angle $\alpha$
and, moreover, that there is just one discrete eigenvalue for $\alpha \ge \frac{\pi}{6}$,
see Theorem~\ref{thmpi6}. In section~\ref{small1}
we discuss the behavior of the discrete eigenvalues for small $\alpha$.
We show that the $n$th eigenvalue $E_n(T^\gamma_\alpha)$ behaves as
\[
E_n(T^\gamma_\alpha)=-\dfrac{\gamma^2}{(2n-1)^2 \alpha^2}+O(1),
\quad \alpha\to 0,
\]
see Corollary~\ref{corol43}, and, moreover, admits
an asymptotic expansion up to any order with respect to
the powers of $\alpha^2$, see Theorem~\ref{full}. The
number of discrete eigenvalues becomes arbitrary large
and is minorated by $\kappa/\alpha$, $\kappa>0$,
as $\alpha$ approaches $0$, see Corollary~\ref{cor42}.
In Theorem~\ref{agmon} we show that the associated
eigenfunctions are localized, in a suitable sense, near the vertex,.
Appendices \ref{appa}--\ref{appd} contain some technical details of  the proof.
In Appendix~\ref{sec6} we apply the results obtained to the study
of Schr\"odinger operators with $\delta$-interactions
supported by star graphs, and show that such operators always
have a finite discrete spectrum, which was missing in the existing literature.

Our proofs are mostly variational and based on the min-max characterization
of the essential spectrum and the eigenvalues. The proof of the finiteness of the discrete spectrum
uses an idea proposed in \cite{ref12} for a different operator
involving a similar geometry, while the continuity and the monotonicity
of the eigenvalues are established using a suitable change of variables.
The asymptotics for small $\alpha$ is based on the well-known Born-Oppenheimer
strategy \cite{bo,raybook} and, similar to various problems involving
small parameters \cite{vbn,HJ,r1,r2,naz1,raybook}, on a reduction to a one-dimensional
effective operator, which in our case acts in $L^2(\mathbb R_+)$ as
\[
f \mapsto \Big( -\dfrac{d^2}{dr^2}-\dfrac{1}{4r^2}-\dfrac{1}{\alpha r}\Big)f
\]
with a suitable boundary condition at the origin, while an additional
work is required due to the singularity of the potential.
We remark that the use of an improved Hardy
inequality \cite{ref9} allows one to perform the reduction
in a rather direct way, without any preliminary
localization argument for the associated eigenfunctions. 
The properties of the eigenfunctions are studied with the help of the standard
Agmon-type approach using a suitable decomposition of the domain~\cite{agmon}.

We remark that in the present paper we are not discussing
the (non-)existence of eigenvalues embedded into the continuous spectrum.
Some partial information can be easily obtained, for example,
the recent Rellich-type result \cite{bonnet} implies the absence
of positive embedded eigenvalues  for $\alpha\ge \frac{\pi}{2}$,
but a specific separate study is needed in order to cover all possible
cases. This will be discussed elsewhere.

\section{Preliminaries}

\subsection{Min-max principle}\label{sminmax}

Recall first the min-max principle giving a variational characterization of eigenvalues.
Let $A$ be a self-adjoint operator acting in a Hilbert space $\mathcal H$ of infinite dimension.
We assume that $A$ is semibounded from below, $A\ge -c$, $c\in \mathbb R$, and denote
\[
\Sigma:=\begin{cases}
\inf \Specess A, & \text{ if }\Specess A\ne \emptyset,\\
+\infty, & \text{ if }\Specess A=\emptyset.
\end{cases}
\]
By $E_j(A)$ we denote its $j$th eigenvalue when ordered in the non-decreasing
order and counted with multiplicities.
The domain and the form domain of $A$ will be denoted by $D(A)$ and $Q(A)$, respectively, and by
$a:\,Q(A)\times Q(A)\to \mathbb C$
we denote the associated sesquilinear form. Recall that
$Q(A)$ is a Hilbert space when considered with the scalar product
$\langle u,v\rangle_a:=a(u,v)+(c+1)\langle u,v\rangle$.
The following result is a standard tool of the spectral theory, see e.g. \cite[Section XIII.1]{rs4}.
\begin{theorem}[Min-max principle]
Let $n \in \mathbb N$ and $Q$ be a dense subset of the Hilbert space $Q(A)$. Let $\Lambda_n(A)$
be the $n$th \emph{Rayleigh quotient} of $A$, which is defined by
\[
\Lambda_n(A):=\sup_{\psi_1,...,\psi_{n-1} \in \mathcal H}\inf_{\substack{\varphi \in Q,\varphi \neq 0\\ \varphi \perp \psi_j, j=1,...,n-1}}\frac{a(\varphi,\varphi)}{\langle \varphi,\varphi \rangle}\equiv \inf_{\substack{G\subset Q \\ \dim G=n}} \sup_{\substack{\varphi \in G\\\varphi \neq 0}} \frac{a(\varphi,\varphi)}{\langle \varphi,\varphi \rangle},
\]
then one and only one of the following assertions is true:
\begin{enumerate}
\item $\Lambda_n(A)<\Sigma$ and $E_n(A)=\Lambda_n(A)$.
\item $\Lambda_n(A)=\Sigma$ and $\Lambda_m(A)=\Lambda_n(A)$ for all $m\ge n$.
\end{enumerate}
\end{theorem}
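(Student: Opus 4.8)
The plan is to deduce both alternatives from the spectral theorem for $A$, writing $E_A(\cdot)$ for its projection-valued spectral measure and $R(\varphi):=a(\varphi,\varphi)/\langle\varphi,\varphi\rangle$ for the Rayleigh quotient, which is defined and continuous on $Q(A)\setminus\{0\}$ with respect to the form norm. The first, purely linear-algebraic, observation is that the two expressions for $\Lambda_n(A)$ are comparable: for any $\psi_1,\dots,\psi_{n-1}\in\mathcal H$ and any $G\subset Q$ with $\dim G=n$, a dimension count yields a nonzero $\varphi\in G$ with $\varphi\perp\psi_j$ for all $j$, so that $\inf_{\varphi\perp\psi_j}R(\varphi)\le\sup_{\varphi\in G}R(\varphi)$; taking the infimum over $G$ and then the supremum over the $\psi_j$ shows that the sup-inf expression is bounded above by the inf-sup one. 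It therefore suffices to sandwich both expressions between the same spectral quantity, and the two alternatives of the theorem correspond precisely to whether $A$ has at least $n$, or fewer than $n$, eigenvalues (with multiplicity) strictly below $\Sigma$.

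Suppose first that such eigenvalues $\mu_1\le\dots\le\mu_n<\Sigma$ exist; since the spectrum below $\Sigma$ is discrete, $\mu_n=E_n(A)$. For a lower bound on the sup-inf expression I would pick an orthonormal family $\psi_1,\dots,\psi_{n-1}$ whose span lies between $\Ran E_A\big((-\infty,\mu_n)\big)$ and $\Ran E_A\big((-\infty,\mu_n]\big)$, which is possible because at most $n-1$ eigenvalues are strictly below $\mu_n$ while at least $n$ are at or below it. Any admissible $\varphi$ then satisfies $E_A\big((-\infty,\mu_n)\big)\varphi=0$, so its spectral measure is carried by $[\mu_n,\infty)$ and $a(\varphi,\varphi)=\int\lambda\,d\|E_A(\lambda)\varphi\|^2\ge\mu_n\|\varphi\|^2$, giving a value $\ge\mu_n$. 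For the matching upper bound on the inf-sup expression I take the genuine eigenspace $G_0=\Span\{v_1,\dots,v_n\}$, on which $R\le\mu_n$, and approximate its basis inside the dense set $Q$ to produce an admissible $G$ with $\sup_{\varphi\in G}R(\varphi)\le\mu_n+\varepsilon$. Together with the elementary inequality this squeezes all expressions to $\mu_n$, so $\Lambda_n(A)=\mu_n<\Sigma$, which is alternative~(1).

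Suppose instead that fewer than $n$ eigenvalues lie below $\Sigma$; then $\Specess A\ne\emptyset$ and $\Sigma<\infty$, since empty essential spectrum would force infinitely many eigenvalues and hence the previous case. For the lower bound I choose $\psi_1,\dots,\psi_{n-1}$ so that their span contains $\Ran E_A\big((-\infty,\Sigma)\big)$ (possible as this space has dimension at most $n-1$); every admissible $\varphi$ then has spectral measure on $[\Sigma,\infty)$ and $R(\varphi)\ge\Sigma$. For the upper bound I use that $\Sigma=\inf\Specess A$ makes $\Ran E_A\big((-\infty,\Sigma+\varepsilon)\big)$ infinite-dimensional for every $\varepsilon>0$, so it contains an $n$-dimensional subspace on which $R\le\Sigma+\varepsilon$; approximating it inside $Q$ yields an inf-sup value $\le\Sigma+2\varepsilon$. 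Letting $\varepsilon\to0$ gives $\Lambda_n(A)=\Sigma$, and the identical count for every $m\ge n$ gives $\Lambda_m(A)=\Sigma$ as well, which is alternative~(2); the two alternatives are visibly exclusive and exhaustive.

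The step I expect to be the real obstacle is the approximation inside $Q$ used in the two upper bounds: one must check that replacing a spectral subspace by a nearby $n$-dimensional subspace of $Q$ perturbs $\sup_{\varphi\in G}R(\varphi)$ by an arbitrarily small amount. I would handle this by fixing an orthonormal basis of the target subspace, perturbing it within $Q$ so that its Gram matrix stays close to the identity (which keeps the vectors independent and the subspace genuinely $n$-dimensional), and then using the uniform continuity of $R$ together with compactness of the unit sphere in finite dimension to bound the change of the supremum uniformly. The rest is bookkeeping with the spectral measure and a careful treatment of eigenvalue multiplicities in the choice of the constraint vectors $\psi_j$.
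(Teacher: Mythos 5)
Your proof is correct, and there is nothing in the paper to compare it against line by line: the paper does not prove this theorem at all, but quotes it as a standard tool with a pointer to Section XIII.1 of \cite{rs4}. What you have written is, in essence, that classical textbook argument — the dimension-count inequality between the sup-inf and inf-sup expressions, the dichotomy according to whether $A$ has at least $n$ eigenvalues (counted with multiplicity) strictly below $\Sigma$, the constraint vectors spanning a space squeezed between $\Ran E_A\big((-\infty,\mu_n)\big)$ and $\Ran E_A\big((-\infty,\mu_n]\big)$, and the infinite rank of $E_A\big((-\infty,\Sigma+\varepsilon)\big)$ coming from $\Sigma\in\Specess A$ — and each step is deployed correctly, including the observation that empty essential spectrum forces infinitely many eigenvalues and hence falls into the first alternative.

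The one genuine subtlety of the statement as the paper phrases it — the test functions range over a dense subset $Q$ of the form domain rather than over $Q(A)$ itself — is exactly the point you flagged, and your resolution is sound: eigenvectors lie in $D(A)\subset Q(A)$, the Rayleigh quotient is continuous on $Q(A)\setminus\{0\}$ in the form norm (which dominates the $\mathcal H$-norm since $a\ge -c$), and perturbing an orthonormal basis inside $Q$ with Gram matrix near the identity keeps the test subspace $n$-dimensional while moving $\sup_G R$ by an arbitrarily small amount. If you write this out in full, record two micro-points that your sketch uses implicitly: in the lower bounds the admissible set $\{\varphi\in Q:\varphi\neq 0,\ \varphi\perp\psi_j\}$ is nonempty (another dimension count, using that $Q$ spans an infinite-dimensional space), and in the second alternative $\Ran E_A\big((-\infty,\Sigma+\varepsilon)\big)\subset D(A)\subset Q(A)$ because $A$ is bounded on that spectral subspace, so the approximation step applies to it. Both are immediate, and the proof stands.
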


We remark that most of the subsequent constructions are heavily based
on estimates for the Rayleigh quotients
of various operators.

\subsection{Robin Laplacian on the half-line}\label{halfline}

For $\gamma>0$ denote by $B_\gamma$ the self-adjoint operator acting in $L^2(\mathbb R_+)$
by
\[
B_\gamma u=-u'', \quad  D(B_\gamma)=\big\{u\in H^2(\mathbb R_+):  -u'(0)=\gamma u(0)\big\}.
\]
One easily checks that $\Specess B_\gamma=[0,+\infty)$ and that the unique eigenvalue is
$E_1(B_\gamma)=-\gamma^2$ with $u(t)=e^{-\gamma t}$ the associated eigenfunction.
Remark that the sesquilinear form for $B_\gamma$ is
\[
b_\gamma(u,u)=\int_0^{+\infty} \lvert u'(t) \rvert^2 dt -\gamma \lvert u(0) \vert ^2,
\quad
D(b_\gamma)=H^1(\mathbb R_+),
\]
hence
\begin{eqnarray}
\int_0^{+\infty} \lvert u'(t) \rvert^2 dt -\gamma \lvert u(0) \vert ^2 \geq - \gamma^2 \int_{0}^{+\infty} \lvert u(t) \rvert ^2 dt,
\quad
u\in H^1(\mathbb R_+).
\label{2.1.1}
\end{eqnarray}

\subsection{Robin Laplacian on an interval}\label{interval}

For $L>0$ and $\gamma\in \mathbb R$, consider the operator $B_{L,\gamma}$ in $L^2(-L,L)$ acting by
\[
B_{L,\gamma}u=-u'',\quad
D(B_{L,\gamma})=\big\{u \in H^2(-L,L): \quad -u'(-L)=\gamma u(-L), \quad u'(L)=\gamma u(L)\big\}.
\]
Remark that the associated sesquilinear form $b_{L,\gamma}$ is
\[
b_{L,\gamma}(L) (u,u)=\int_{-L}^L \lvert u'(t) \rvert ^2 dt -\gamma \lvert u(-L)\rvert^2 -\gamma  \lvert u(L)\rvert^2,
\quad u \in H^1(-L,L).
\]
The operator $B_{L,\gamma}$ has a compact resolvent and its spectrum is purely discrete
and consists of the simple eigenvalues $E_j(L,\gamma)$, $j\in \mathbb N$,
numbered in the increasing order. Remark that due to the scaling we have
\begin{eqnarray}
E_j(L,\gamma)=\frac{1}{L^2} E_j(1,\gamma L), \quad j\in \mathbb N.
\label{2.1.2}
\end{eqnarray}
An easy application of the min-max principle shows that
the maps $\mathbb R\ni \gamma \mapsto E_j(1,\gamma)$ are continuous,
in particular $E_j(1,0)$ coincides with the $j$th eigenvalue of the Neumann Laplacian on $(-1,1)$,
hence
\begin{align}
\label{2.1.3}
E_1(1,0)&=0, \\
\label{2.1.4}
E_2(1,0)&=\frac{\pi^2}{4}.
\end{align}
Moreover, as follows from the computations of Appendix in \cite{ref2}, one has
$E_1(1,\gamma)=-k^2$ with $k>0$ being the solution to $k \tanh k= \gamma$, hence,
the function $\mathbb R\ni \gamma \mapsto E_1(1,\gamma)$ is real-analytic.

By \cite[Proposition A.3]{ref2} the following assertions hold true:
\begin{itemize}
\item for any $\gamma>0$ there holds $E_1(L,\gamma)<0$, and the associated eigenfunction
is
\[
\Phi_{L}(\gamma,t)=\cosh\big(\sqrt{-E_1(L,\gamma)}t\big),
\]
\item the inequality $E_2(L,\gamma)<0$ holds if and only if $\gamma L>1$.
\item For large $L$ there holds
\begin{align}
\label{2.1.5}
E_1(L,\gamma) &=-\gamma^2-4 \gamma^2 e^{-2\gamma L}+O(Le^{-4L}), \\
\label{2.1.6}
E_2(L,\gamma)&=-\gamma^2+4 \gamma^2e^{-2 \gamma L}+O(Le^{-4L}).
\end{align}
\end{itemize}
Let us introduce some quantites to be used later in the text.  For $\gamma \in\mathbb R_+$, denote
\[
m(\gamma):= \sqrt{-E_1(1,\gamma)},
\]
then we have
\[
 E_1(L,\gamma)=-\dfrac{m(\gamma L)^2}{L^2},
\quad
\Phi_{L}(\gamma,t)=\cosh\Big(\dfrac{m(\gamma L)t}{L}\Big),
\]
and a simple direct computation gives
\[
\int_{-L}^L \big\lvert \Phi_{L}(\gamma,t)\big\rvert^2 dt=L\left ( \frac{\sinh\big(2m(\gamma L)\big)}{2m(\gamma L)}+1 \right ) .
\]
The first eigenfunction $\widetilde \Phi_{L}(\gamma,\cdot)$ of $B_{\gamma,L}$,
chosen positive and normalized, is then given by
\[
\widetilde \Phi_{L}(\gamma,t)=C_L(\gamma) \Phi_{L}(\gamma,t) \quad\text{with}\quad C_L(\gamma)\coloneqq \frac{1}{\sqrt{L}}\left ( \frac{\sinh\big(2m(\gamma L)\big)}{2m(\gamma L)}+1 \right ) ^{-\frac{1}{2}}.
\]
For any fixed $L$ and $t$ the functions $\gamma \mapsto C_L(\gamma)$ and $\gamma \mapsto \Phi_{L}(\gamma,t)$ are smooth, and, by direct computation,
\begin{gather*}
\dfrac{\partial}{\partial\gamma} C_L(\gamma)=-\sqrt{L} m'(\gamma L) \left (\frac{\sinh\big(2m(\gamma L)\big)}{2m(\gamma L)}+1 \right )^{-\frac{3}{2}} \left (\frac{\cosh(2m(\gamma L))}{2m(\gamma L)}-\frac{\sinh\big(2m(\gamma L)\big)}{(2m(\gamma L))^2} \right ), \\
\dfrac{\partial}{\partial\gamma} \Phi_{L}(\gamma,t)=m'(\gamma L) t \sinh\Big(m(\gamma L) \frac{t}{L}\Big).
\end{gather*}
Moreover, as $\gamma\mapsto E_1(L,\gamma)$ is analytic and simple, we can compute its first derivative
in the standard way, namely, consider the implicit equation satisfied by $\Phi_{L}(\gamma,\cdot)$ and $E_1(L,\gamma)$:
\begin{align}
\label{eqimp1}
-\frac{\partial^2}{\partial t^2}\Phi_{L}(\gamma,t)&=E_1(L,\gamma)\Phi_{L}(\gamma,t),\quad t\in(-L,L)\\
\label{eqimp2}
\pm \frac{\partial}{\partial t}\Phi_{L}(\gamma,\pm L)& = \gamma \Phi_{L}(\gamma,\pm L).
\end{align}
We take the derivative of \eqref{eqimp1} with respect to $\gamma$ and, after multiplying by $\Phi_{L}(\gamma,\cdot)$, we integrate it over $(-L,L)$. After two integrations by part we obtain the following equality
\begin{multline}
\label{eqimp2bis}
\int_{-L}^L \partial_{\gamma}\Phi_{L}(\gamma,t) \left(-\frac{\partial^2}{\partial t^2}\Phi_{L}(\gamma,t) \right)dt\\
- \Big[\Big(\partial_t\partial_{\gamma}\Phi_{L}(\gamma,t)\Big)\Big(\Phi_{L}(\gamma,t) \Big)\Big]_{-L}^L + \Big[\Big(\partial_{\gamma}\Phi_{L}(\gamma,t)\Big)\Big(\partial_t\Phi_{L}(\gamma,t)\Big)\Big]_{-L}^L \\
= \partial_\gamma E_1(L,\gamma) \int_{-L}^L\lvert \Phi_{L}(\gamma,t)\rvert^2 dt +E_1(L,\gamma) \int_{-L}^L \partial_\gamma\Phi_{L}(\gamma,t) \Phi_{L}(\gamma,t) dt.
\end{multline}
We now take the derivative of \eqref{eqimp2} with respect to $\gamma$ and we get
\begin{align*}
\partial_\gamma\partial_t \Phi_{L}(\gamma,L)&=\Phi_{L}(\gamma,L)+\gamma\partial_\gamma \Phi_{L}(\gamma,L),\\
-\partial_\gamma\partial_t \Phi_{L}(\gamma,-L)&=\Phi_{L}(\gamma,-L)+\gamma\partial_\gamma \Phi_{L}(\gamma,-L).
\end{align*}
After replacing these expressions in \eqref{eqimp2bis} we finally have
\begin{align}
\label{2.1.12}
\frac{\partial}{\partial \gamma} E_1(L,\gamma)=-\dfrac{\lvert \Phi_{L}(\gamma,-L)\rvert^2+\lvert \Phi_{L}(\gamma,L)\rvert^2}{\lVert \Phi_{L}(\gamma,\cdot)\rVert^2_{L^2(-L,L)}}=-\frac{2\cosh^2\left(m(\gamma L)\right)}{L\left(\dfrac{\sinh\big(2m(\gamma L)\big)}{2m(\gamma L)}+1\right)}.
\end{align}
In particular,
\begin{equation}
  \label{tmp5}
\frac{\partial}{\partial \gamma} E_1(1,\gamma)\Big\lvert_{\gamma=0}=-1.
\end{equation}
Furthermore, due to the preceding consideration, the following representation is valid:
\begin{proposition}{\label{Prop2.5}}
There exists  $\phi \in C^\infty(\mathbb R_+)\mathop{\cap}L^\infty(\mathbb R_+)$ such that
$E_1(1,\gamma)=-\gamma+\gamma^2\phi(\gamma)$ for all $\gamma \in \mathbb R_+$.
\end{proposition}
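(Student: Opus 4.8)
The plan is to define $\phi$ by the only formula compatible with the asserted identity and then to verify smoothness and boundedness separately; the genuine content lies in the behaviour of $\phi$ near the origin and near infinity, while smoothness on the open half-line is immediate.

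Since the excerpt has already recorded that $\mathbb R\ni\gamma\mapsto E_1(1,\gamma)$ is real-analytic (being given by $E_1(1,\gamma)=-k^2$ with $k>0$ solving $k\tanh k=\gamma$), I would first set
\[
\phi(\gamma):=\frac{E_1(1,\gamma)+\gamma}{\gamma^2},\qquad \gamma>0 ,
\]
which is forced by the desired equality. On $(0,+\infty)$ this $\phi$ is real-analytic, hence $C^\infty$, as the quotient of a real-analytic function by the non-vanishing factor $\gamma^2$. To control it at $\gamma=0$ I would write $g(\gamma):=E_1(1,\gamma)+\gamma$ and observe that $g(0)=E_1(1,0)=0$ by \eqref{2.1.3}, while $g'(0)=\partial_\gamma E_1(1,\gamma)\big|_{\gamma=0}+1=0$ by \eqref{tmp5}. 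Thus the real-analytic function $g$ vanishes to order at least two at the origin, so $g(\gamma)/\gamma^2$ extends real-analytically across $0$. In particular $\phi$ is $C^\infty$ up to and including the endpoint, which settles smoothness and also gives boundedness on every bounded subinterval.

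It then remains to bound $\phi$ as $\gamma\to+\infty$, which is the main point. For the upper bound I would use $\tanh k<1$, so that $\gamma=k\tanh k<k$ forces $k>\gamma$ and hence $E_1(1,\gamma)=-k^2<-\gamma^2$, giving $\phi(\gamma)<-1+\gamma^{-1}$. For the lower bound I would use $\tanh k\ge 1-2e^{-2k}$, which turns $k\tanh k=\gamma$ into
\[
k\le \gamma+2ke^{-2k}\le \gamma+\tfrac1e ,
\]
the bound $2ke^{-2k}\le 1/e$ coming from the maximum at $k=\tfrac12$; squaring yields $k^2\le\gamma^2+2\gamma/e+1/e^2$, whence $E_1(1,\gamma)\ge-\gamma^2-2\gamma/e-1/e^2$ and
\[
\phi(\gamma)\ge -1+\frac{1-2/e}{\gamma}-\frac{1}{e^2\gamma^2}.
\]
Together these two estimates show $\phi$ is bounded for $\gamma\ge 1$, and combined with continuity on $[0,1]$ this gives $\phi\in L^\infty(\mathbb R_+)$. (Alternatively, the large-$\gamma$ asymptotics can be read off \eqref{2.1.5} through the scaling \eqref{2.1.2}, giving $\phi(\gamma)\to-1$.)

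The only mild obstacle I anticipate is establishing the second-order vanishing of $g$ at the origin; everything hinges on having the \emph{exact} values $E_1(1,0)=0$ and $\partial_\gamma E_1(1,0)=-1$ from \eqref{2.1.3} and \eqref{tmp5}, since an error in either would leave a genuine singularity. The infinity estimate, by contrast, is robust and requires only the elementary two-sided control of $k$ in terms of $\gamma$.
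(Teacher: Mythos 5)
Your proposal is correct, and its skeleton coincides with the paper's proof: both define $\phi(\gamma)=\gamma^{-2}\big(E_1(1,\gamma)+\gamma\big)$, both handle $\gamma\to 0$ via the exact values from \eqref{2.1.3} and \eqref{tmp5} together with real-analyticity, and both then conclude from boundedness at the two ends. The one genuine divergence is at infinity: the paper simply quotes the large-$L$ asymptotics \eqref{2.1.5} through the scaling $L^2E_1(L,\gamma)=E_1(1,\gamma L)$, whereas you extract explicit two-sided bounds directly from the transcendental equation $k\tanh k=\gamma$ (the inequality $\tanh k\ge 1-2e^{-2k}$ giving $\gamma<k\le\gamma+1/e$, hence $-1+(1-2/e)\gamma^{-1}-e^{-2}\gamma^{-2}\le\phi(\gamma)<-1+\gamma^{-1}$); these estimates check out, and your route is more self-contained since it avoids the expansion imported from \cite{ref2}, at the modest cost of redoing by hand what \eqref{2.1.5} already encodes. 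You also prove slightly more near the origin: the paper only records boundedness of $\phi$ there, while your observation that $g(\gamma)=E_1(1,\gamma)+\gamma$ vanishes to second order shows $\phi$ extends real-analytically across $0$, which in particular makes the $C^\infty$ claim on $\mathbb R_+$ transparent. Your closing caveat is well placed: the argument does hinge on the exact identities $E_1(1,0)=0$ and $\partial_\gamma E_1(1,\gamma)\big\lvert_{\gamma=0}=-1$, which is precisely why the paper computes \eqref{tmp5} beforehand via \eqref{2.1.12}.
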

\begin{proof}
Define $\phi:\mathbb R_+\to \mathbb R$ by
$\phi(\gamma)=\gamma^{-2} \big(E_1(1,\gamma)+\gamma\big)$.
Due to \eqref{2.1.5} and to the equality $L^2 E_1(L,\gamma)= E_1(1,\gamma L)$,
the function $\phi$ is bounded at infinity.
For $\gamma$ near $0$ one has, due to \eqref{2.1.3} and \eqref{tmp5}
and due to the analyticity, $E_1(1,\gamma)=-\gamma+O(\gamma^2)$,
which shows that $\phi$ is bounded near $0$. As $\phi$ is
continuous, the result follows.
\end{proof}

\subsection{Robin Laplacians in sectors: first properties}

The following theorem is a starting point for our considerations.
The results are essentially known for the specialists,
in particular, the points (b) and (c) were discussed in \cite{ref13},
but, to our knowledge, it was never stated explicitly so far.
We prefer to give a complete proof in Appendix~\ref{appa2}
in order to keep the presentation self-contained.

\begin{theorem}\label{SPEC}
For any $\alpha \in (0,\pi)$ and  any $\gamma>0$ the sesquilinear form $t^\gamma_\alpha$ given by \eqref{talph}
is closed and semibounded from below, hence, the associated operator $T^\gamma_\alpha$ is self-adjoint
in $L^2(U_\alpha)$.
Furthermore,
\begin{itemize}
\item[(a)] $\Specess T^\gamma_\alpha=[-\gamma^2,+\infty)$ for any $\alpha \in (0,\pi)$.
\item[(b)] If $\alpha\in\big(0,\frac{\pi}{2}\big)$, then
\[
E_1(T^\gamma_\alpha)=-\dfrac{\gamma^2}{\sin^2\alpha},
\]
and $u(x_1,x_2)=\exp\big(-\gamma x_1/\sin \alpha\big)$
is an associated eigenfunction.
\item[(c)] for $\alpha \in \big[\frac{\pi}{2},\pi\big)$, the discrete spectrum of $T^\gamma_\alpha$
is empty.
\end{itemize}
\end{theorem}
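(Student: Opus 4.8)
The plan is to treat the three items separately, in each case reducing matters to the one-dimensional Robin problems of Subsection~\ref{halfline} via the min-max principle and suitable slicings. Throughout I write $\partial U_\alpha=R_+\cup R_-$ for the two boundary rays, and near each ray I introduce rotated Cartesian coordinates $(s,t)$ with $s$ the arclength along the ray and $t\ge0$ the distance into $U_\alpha$; in these coordinates the Robin condition $\partial_\nu u=\gamma u$ becomes exactly the boundary condition $-\partial_t u=\gamma u$ at $t=0$ of the operator $B_\gamma$. Before anything else I would record that $t^\gamma_\alpha$ is closed and semibounded: since $U_\alpha$ is a Lipschitz cone the trace map $H^1(U_\alpha)\to L^2(\partial U_\alpha)$ is bounded, and the slicing/scaling estimate $\int_{\partial U_\alpha}|u|^2\,ds\le\varepsilon\int_{U_\alpha}|\nabla u|^2+C_\varepsilon\int_{U_\alpha}|u|^2$ (valid for every $\varepsilon>0$) shows the negative boundary term in \eqref{talph} to be an infinitesimally form-bounded perturbation of $\int_{U_\alpha}|\nabla u|^2$; closedness and semiboundedness follow immediately.

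For (a) I would prove the two inclusions separately. To get $[-\gamma^2,+\infty)\subseteq\Specess T^\gamma_\alpha$, I construct for each $\xi\in\RR$ a singular sequence for $\lambda=\xi^2-\gamma^2$: in the coordinates near $R_+$, put $u_n(s,t)=\chi\big((s-s_n)/L_n\big)e^{i\xi s}e^{-\gamma t}$ (with a harmless transverse cut-off) where $s_n\to+\infty$ and $1\ll L_n\ll s_n$. The factor $e^{-\gamma t}$ satisfies the boundary condition exactly and contributes $-\gamma^2$, the oscillation $e^{i\xi s}$ contributes $\xi^2$, and the cut-off yields $(T^\gamma_\alpha-\lambda)u_n=(-\phi''-2i\xi\phi')e^{i\xi s}e^{-\gamma t}$ with $\phi(s)=\chi((s-s_n)/L_n)$, so $\|(T^\gamma_\alpha-\lambda)u_n\|/\|u_n\|=O(L_n^{-1})\to0$; as the supports escape to infinity within the half-plane-like region near $R_+$, the normalized $u_n$ form a singular sequence and $\lambda\in\Specess$. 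For the reverse inequality $\inf\Specess\ge-\gamma^2$ I use Persson's characterization: for $u\in H^1(U_\alpha)$ supported in $\{r>R\}$ I localize by an angular partition of unity $\chi_+^2+\chi_-^2+\chi_0^2=1$ (functions of the polar angle only) near $R_+$, near $R_-$, and in the interior bulk. On the bulk piece the boundary term is absent and $t^\gamma_\alpha(\chi_0u,\chi_0u)=\int|\nabla(\chi_0u)|^2\ge0$, while on each near-ray piece the slices in $t$ are, after extension by zero, half-lines on which \eqref{2.1.1} gives $t^\gamma_\alpha(\chi_\pm u,\chi_\pm u)\ge-\gamma^2\|\chi_\pm u\|^2$. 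Since $|\nabla\chi_j|\le C/r\le C/R$, the IMS remainder is $O(R^{-2})\|u\|^2$ and vanishes as $R\to\infty$; together with the first inclusion this gives $\Specess T^\gamma_\alpha=[-\gamma^2,+\infty)$.

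For (b), assume $\alpha<\tfrac\pi2$. I first check directly that $u_0(x_1,x_2)=\exp(-\gamma x_1/\sin\alpha)$ lies in $H^1(U_\alpha)$ — the hypothesis $\alpha<\tfrac\pi2$ forces $x_1>0$ on $U_\alpha$ and hence $L^2$-decay — and that $-\Delta u_0=-\tfrac{\gamma^2}{\sin^2\alpha}u_0$ with $\partial_\nu u_0=\gamma u_0$ on both rays, so $-\gamma^2/\sin^2\alpha$ is an eigenvalue; as $\sin\alpha<1$ it lies strictly below $\inf\Specess=-\gamma^2$ and is therefore discrete. To see it is the \emph{lowest} eigenvalue I use the ground-state substitution $u=u_0v$ with $v=u/u_0$: integrating $\int_{U_\alpha}\nabla u_0\cdot\nabla(u_0v^2)$ by parts and inserting the equation and the boundary condition for $u_0$ gives the identity $t^\gamma_\alpha(u,u)=\int_{U_\alpha}u_0^2|\nabla v|^2\,dx-\tfrac{\gamma^2}{\sin^2\alpha}\|u\|^2\ge-\tfrac{\gamma^2}{\sin^2\alpha}\|u\|^2$ for all $u\in H^1(U_\alpha)$, whence $T^\gamma_\alpha\ge-\gamma^2/\sin^2\alpha$ and $E_1(T^\gamma_\alpha)=-\gamma^2/\sin^2\alpha$. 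The only care required is to justify the integration by parts on the non-compact domain, which I would do by density from compactly supported $u$.

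For (c), (a) gives $\Specess T^\gamma_\alpha=[-\gamma^2,+\infty)$, so $\Specdisc T^\gamma_\alpha=\emptyset$ is equivalent to the full lower bound $T^\gamma_\alpha\ge-\gamma^2$, i.e.\ the absence of eigenvalues below $-\gamma^2$. This is the main obstacle: the inequality $\inf\Spec T^\gamma_\alpha\le\inf\Specess T^\gamma_\alpha$ is automatic, but the reverse is not, and the simple exponential supersolution of (b) only yields the weaker bound $-\gamma^2/\sin^2\alpha$. I expect the cleanest route to be the known value \eqref{eq-tga}, which asserts $\inf\Spec T^\gamma_\alpha=-\gamma^2$ for $\alpha\ge\tfrac\pi2$; combined with (a) this at once rules out eigenvalues below $-\gamma^2$ and gives $\Specdisc T^\gamma_\alpha=\emptyset$. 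A self-contained alternative would be to exhibit a positive generalized supersolution $u_0>0$ with $-\Delta u_0\ge-\gamma^2u_0$ and $\partial_\nu u_0\ge\gamma u_0$ and rerun the substitution of (b) to obtain $T^\gamma_\alpha\ge-\gamma^2$; the difficulty is precisely that no elementary $u_0$ does the job, and the natural decomposition of $U_\alpha$ (for $\alpha\ge\tfrac\pi2$) into the two half-planes bounded by the lines carrying $R_\pm$ leaves a genuine $O(r^{-2})$ Hardy-type remainder near the vertex that would have to be absorbed.
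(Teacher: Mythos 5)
Your treatment of closedness/semiboundedness matches the paper's (a slicing trace estimate plus KLMN). For (a), the singular-sequence construction along a boundary ray is essentially the paper's Weyl-sequence argument in Appendix~\ref{appa2}; note only that your ``harmless transverse cut-off'' hides a real point for thin sectors -- the support must stay clear of the \emph{other} ray, which is why the paper ties the transverse cutoff width to $\sin 2\alpha$ -- but this is fixable. Your reverse inequality $\inf\Specess T^\gamma_\alpha\ge-\gamma^2$ via Persson's theorem plus an angular IMS partition is a genuinely different route from the paper's, which instead uses Neumann bracketing into a compact quadrilateral, two pieces unitarily equivalent to $T^\gamma_{RN}\otimes 1+1\otimes T_N$ on a strip of width $L\to\infty$, and a translated sector; your version is shorter but leans on Persson's characterization for Robin forms on unbounded domains as a black box, which the paper's hands-on bracketing avoids. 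For (b), your ground-state substitution $u=u_0v$ with the identity $t^\gamma_\alpha(u,u)=\int_{U_\alpha}u_0^2|\nabla v|^2\,dx-\gamma^2(\sin^2\alpha)^{-1}\|u\|^2$ is correct (the boundary terms cancel exactly because $\partial_\nu u_0=\gamma u_0$) and is a different, arguably slicker argument than the paper's, which obtains the same lower bound by slicing $U_\alpha$ into horizontal half-lines and applying the one-dimensional inequality \eqref{2.1.1} with Robin parameter $\gamma/\sin\alpha$, as in \eqref{trace1}.

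The genuine gap is point (c). You correctly reduce it to the bound $\inf\Spec T^\gamma_\alpha\ge-\gamma^2$ for $\alpha\ge\frac\pi2$, but you do not prove it: invoking \eqref{eq-tga} is circular in this context, since that formula from \cite{ref13} is precisely part of what Theorem~\ref{SPEC} sets out to re-prove self-containedly, and you yourself diagnose that your supersolution/half-plane alternative fails. The missing idea (Appendix~\ref{appa2}, subsection~\ref{b3}) is a Neumann bracketing adapted to the angle: decompose $U_\alpha$ by polar angle into the inner sector $D_1=\{|\arg(x_1+ix_2)|<\alpha-\frac\pi2\}$ and the two congruent outer pieces $D_2$, $D_3$ of opening \emph{exactly} $\frac\pi2$. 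Imposing Neumann conditions on the two internal cuts, i.e.\ using $H^1(U_\alpha)\subset H^1(D_1\cup D_2\cup D_3)$ and the min-max principle, decouples the three forms: $D_1$ meets $\partial U_\alpha$ only at the vertex, so $Q_1\ge0$, while each right-angle piece separates variables as $T_N\otimes1+1\otimes B_\gamma$ and has infimum exactly $-\gamma^2$. The choice of half-opening $\alpha-\frac\pi2$ is what makes the outer pieces right-angled so that separation of variables applies, and since bracketing needs no partition of unity there is no Hardy-type remainder at the vertex to absorb -- precisely the obstruction you ran into with the two overlapping half-planes.
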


Therefore, the discrete spectrum of $T^\gamma_\alpha$
is non-empty if and only if $\alpha<\frac{\pi}{2}$, \emph{which will be assumed
in the rest of the paper}. Our principal aim is to obtain a more detailed information
on the number of discrete eigenvalues
and on their behavior with respect to the angle $\alpha$.

Remark that the domain $U_\alpha$ is invariant by dilations,
hence, the operator $T^\gamma_\alpha$
is unitarily equivalent to $\gamma^2 T^1_\alpha$.
Therefore, in what follows
we restrict our attention to the operator
\[
T_\alpha\coloneqq T_\alpha^1.
\]

\section{Qualitative spectral properties}

Our first objective is to show that the discrete spectrum of $T_\alpha$
is finite. One should remark that the result is dimension-dependent
in the sense that Robin Laplacians on cones may have an infinite discrete spectrum
in higher dimensions, as shown in \cite{bpp,ref14}.
Later, in section \ref{monotone}, we prove that the eigenvalues
of $T_\alpha$ are monotone continuous functions of $\alpha$.

\subsection{Reduction by parity}\label{parity}

We start with a decomposition of $T_\alpha$
due to the symmetry of the sector.
Consider the upper half $U^+_\alpha$ of $U_\alpha$,
$U_\alpha^+=U_\alpha \mathbin{\cap} (\mathbb R\times \mathbb R_+)$,
and the unitary map
\begin{gather*}
\mathcal U:
L^2(U_\alpha)\ni u \mapsto (g,h) \in L^2(U_\alpha^+)\oplus L^2(U_\alpha^+),\\
g(x_1,x_2):=\frac{u(x_1,x_2)+u(x_1,-x_2)}{\sqrt{2}},\quad
h(x_1,x_2):=\frac{u(x_1,x_2)-u(x_1,-x_2)}{\sqrt{2}}.
\end{gather*}
By direct computation, for $u \in D(t_\alpha)$ one has
$t_\alpha(u,u)=t_\alpha^N(g,g)+t_\alpha^D(h,h)$ with
\[
t_\alpha^N(g,g)=\int_{U_\alpha^+} \Big| \nabla g(x_1,x_2) \Big|^2 dx-\int_{\mathbb R^+} \Big|\,
g\left(\frac{x_2}{\tan \alpha},x_2\right)\Big|^2 \frac{dx_2}{\sin \alpha},
\quad
g\in H^1(U_\alpha^+),
\]
and $t_\alpha^D$ is given by the same expression but acts on the smaller domain
\[
D(t_\alpha^D)=\{h \in H^1(U_\alpha^+):  h(\cdot,0)=0\},
\]
and $D(t^N_\alpha)=P_1\mathcal U D(t_\alpha)$
and $D(t^D_\alpha)=P_2\mathcal U D(t_\alpha)$,
where $P_j:L^2(U_\alpha^+)\oplus L^2(U_\alpha^+)\to L^2(U_\alpha^+)$
is the projection onto the $j$the component.
Hence, if $T_\alpha^N$ and $T_\alpha^D$ are the self-adjoint operators acting in $L^2(U^+_\alpha)$ and associated with
$t_\alpha^N$ and $t_\alpha^D$, respectively, then, by construction,
$T_\alpha= \mathcal U^* (T_\alpha^N \oplus T_\alpha^D) \mathcal U$,
and it is sufficient to study separately the spectra of $T_\alpha^D$ and $T_\alpha^N$.

Let us show first that
\begin{equation}
  \label{tda}
\inf \Spec T^D_\alpha\ge -1.
\end{equation}
To see this, consider the half-plane
\begin{equation}
    \label{eq-paa}
P_\alpha=\big\{(x_1,x_2)\in \mathbb R^2,\quad x_1\geq \frac{x_2}{\tan\alpha}\big\}
\end{equation}
and remark that if one takes $u\in D(t^D_\alpha)$ and denotes by $\widetilde u$
its extension by zero to $P_\alpha$, then
\[
t^D_\alpha(u,u)=q_{P_\alpha}(\widetilde u,\widetilde u),
\]
where
\[
q_{P_\alpha}(u,u)=\int_{\mathbb R}\int_{\frac{x_2}{\tan\alpha}}^{+\infty} \big| \nabla u(x_1,x_2)\big|^2 dx-
\int_{\mathbb R} \Big| u\left(\frac{x_2}{\tan\alpha},x_2\right)\Big|^2\frac{dx_2}{\sin\alpha},
\quad
D(q_{P_\alpha})=H^1(P_\alpha).
\]

If $Q_{P_\alpha}$ is the self-adjoint operator associated with $q_{P_\alpha}$ and acting in $L^2(P_\alpha)$, then
$\inf\Spec T^D_\alpha\geq \inf\Spec Q_{P_\alpha}$. On the other side, by applying a rotation
one sees that $Q_{P_\alpha}$ is unitarily equivalent to
$Q=B_1\otimes 1 + 1\otimes L$ acting in $L^2(\mathbb R\times\mathbb R_+)\simeq L^2(\mathbb R)\otimes L^2(\mathbb R_+)$,
where $B_1$ is defined in subsection~\ref{halfline} and $L$ is the free Laplacian in $L^2(\mathbb R)$.
In particular, $\Spec Q=\overline{\Spec B_1 + \Spec L}=[-1,+\infty)$, which proves \eqref{tda}.
Therefore, we have
\[
\Spec T_\alpha\mathop{\cap}(-\infty,-1)=\Spec T^N_\alpha\mathop{\cap}(-\infty,-1).
\]
Furthermore, in view of Theorem \ref{SPEC} we have
\[
\Specess T^N_\alpha \subset \Specess T_\alpha=[-1,+\infty),
\]
in particular,
\begin{equation}
     \label{specdisc}
\Specdisc T_\alpha=\Specdisc T^N_\alpha\mathop{\cap}(-\infty,-1),
\end{equation}
and the eigenvalue multiplicities are preserved. It also follows that
all eigenfunctions of $T_\alpha$ associated with the discrete eigenvalues
are even with respects to $x_2$.

\subsection{Finiteness of the discrete spectrum}\label{fini}

\begin{theorem}\label{thm31}
The discrete spectrum of $T_\alpha$ is finite for any $\alpha\in(0,\frac{\pi}{2})$.
\end{theorem}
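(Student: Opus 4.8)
The plan is to reduce the two-dimensional problem to a one-dimensional effective problem and to count, via the min-max principle, the eigenvalues lying below the bottom $-1$ of the essential spectrum. By the parity decomposition of Subsection~\ref{parity}, and in particular by \eqref{specdisc}, the discrete spectrum of $T_\alpha$ coincides with the set of eigenvalues of $T^N_\alpha$ situated in $(-\infty,-1)$; since $\Specess T^N_\alpha\subset[-1,+\infty)$, these are exactly the isolated eigenvalues of $T^N_\alpha$ below its essential spectrum. Hence it suffices to show that $T^N_\alpha$ has only finitely many eigenvalues below $-1$.

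First I would localize by a Neumann bracketing: cut $U_\alpha^+$ along the arc $\{r=R\}$ into a bounded piece $\Omega^{\mathrm{in}}_R=U_\alpha^+\cap\{r<R\}$ and an unbounded piece $\Omega^{\mathrm{out}}_R=U_\alpha^+\cap\{r>R\}$, and add a Neumann condition on the cut. Decoupling only lowers the quadratic form, so the number of eigenvalues of $T^N_\alpha$ below $-1$ is bounded by the sum of the corresponding counting functions of the two pieces. The inner operator acts on a bounded Lipschitz domain, hence has compact resolvent and contributes only finitely many eigenvalues below $-1$ (their number may be large for small $\alpha$, but it is finite for fixed $\alpha$). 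All the difficulty is concentrated in $\Omega^{\mathrm{out}}_R$.

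On $\Omega^{\mathrm{out}}_R$ I would pass to polar coordinates $(r,\theta)\in(R,\infty)\times(0,\alpha)$ and analyse the transverse operator at fixed $r$: after rescaling to the arclength variable $s=r\theta$, it is exactly the Robin Laplacian on $(0,\alpha r)$ with a Neumann condition at $0$ and the Robin condition with $\gamma=1$ at $\alpha r$, whose eigenvalues are the even eigenvalues of the operator $B_{\alpha r,1}$ of Subsection~\ref{interval}. Thus the two lowest transverse levels are $\mu_1(r)=E_1(\alpha r,1)$ and $\mu_2(r)=E_3(\alpha r,1)$; by \eqref{2.1.5}--\eqref{2.1.6} one has $\mu_1(r)=-1-4e^{-2\alpha r}+O(re^{-4\alpha r})$, whereas $B_{\alpha r,1}$ has at most two negative eigenvalues, so $\mu_2(r)\ge 0$ for $r>R$ once $\alpha R>1$. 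Projecting fibrewise onto the transverse ground state and its orthogonal complement (a Born--Oppenheimer splitting), the excited part has energy at least $\mu_2(r)\ge 0>-1$ and therefore produces no spectrum below $-1$, while the ground-state part is bounded from below by an effective one-dimensional radial operator of the form $-\partial_r^2-\tfrac{1}{4r^2}+\mu_1(r)$ on $L^2\big((R,\infty)\big)$, with a Neumann condition at $R$, the term $-\tfrac{1}{4r^2}$ arising from the substitution turning $L^2(r\,dr)$ into $L^2(dr)$.

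Since $\mu_1(r)+1=O(e^{-2\alpha r})$ is an exponentially decaying, hence integrable, potential well, and the inverse-square term is bounded by $1/(4R^2)$ on $\{r>R\}$, this one-dimensional operator has essential spectrum $[-1,+\infty)$ and, by the classical one-dimensional bound guaranteeing that a Schr\"odinger operator with an $L^1$ potential binds only finitely many states, only finitely many eigenvalues below $-1$. Combining the two pieces of the bracketing, $T^N_\alpha$ has finitely many eigenvalues below $-1$, which is the assertion. The main obstacle is making the Born--Oppenheimer reduction on $\Omega^{\mathrm{out}}_R$ rigorous as a genuine lower bound: the transverse ground state $\psi_1(r,\cdot)$ depends on $r$, so differentiating $g=f(r)\psi_1(r,\cdot)+g_\perp$ in $r$ generates cross terms involving $\partial_r\psi_1$, and these must be absorbed using the uniform transverse gap $\mu_2(r)-\mu_1(r)\ge c>0$ on the outer region together with uniform control of $\|\partial_r\psi_1\|$; the shrinking of the transverse fibre as $r\to 0$, where $\mu_1(r)\sim-1/(\alpha r)\to-\infty$, is deliberately confined to the compact inner region, where it is harmless.
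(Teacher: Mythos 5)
Your overall architecture (parity reduction, Neumann bracketing along the arc $r=R$, a compact inner sector with compact resolvent, and a fibered Born--Oppenheimer reduction in polar coordinates on the outer piece) is coherent, and it is genuinely different from the paper's proof. But the final counting step rests on a false principle. There is no ``classical bound guaranteeing that a Schr\"odinger operator with an $L^1$ potential binds only finitely many states'': the potential $-c/r^2$ with $c>1/4$ is integrable on $(R,\infty)$ yet produces infinitely many eigenvalues below the essential spectrum. The classical finiteness result (Bargmann) requires the first moment $\int r\,\lvert V(r)\rvert\,dr<\infty$, and for your effective operator $-\partial_r^2-\tfrac{1}{4r^2}+\mu_1(r)$ this moment diverges precisely because of the inverse-square term: $\int_R^\infty r\cdot\tfrac{1}{4r^2}\,dr=+\infty$. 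Your remark that this term is ``bounded by $1/(4R^2)$'' does not help, since boundedness is irrelevant to the counting problem. Your conclusion happens to be true, but only because the coupling constant is exactly the critical Hardy value $\tfrac14$: with $\tfrac12$ in place of $\tfrac14$ your argument would read verbatim while the conclusion would be false, which shows the justification is missing rather than merely terse. To close the gap you need a borderline tool: for instance, undo the substitution $u=r^{1/2}w$, which turns $-\partial_r^2-\tfrac{1}{4r^2}$ into the radial free Laplacian in $L^2\big((R,\infty),r\,dr\big)$, and apply a two-dimensional (logarithmically weighted) Bargmann-type bound to the exponentially small well $\mu_1(r)+1=O(e^{-2\alpha r})$; alternatively invoke the critical Kneser non-oscillation theorem, noting that $r^2\big(\tfrac{1}{4r^2}+\lvert\mu_1(r)+1\rvert\big)\to\tfrac14$. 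A second, smaller gap is the cross-term absorption in the fibered projection, which you correctly flag but do not carry out; the required uniform bound on $\lVert\partial_r\psi_1(r,\cdot)\rVert_{L^2}$ is nontrivial (the paper proves exactly such a bound, Lemma~\ref{majorK}, for its small-angle analysis, and absorbs the cross terms as in Proposition~\ref{sbb1}).

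It is instructive to see how the paper's route is engineered to avoid the borderline altogether. It cuts in the Cartesian variable $x_2$ rather than in $r$, extends the outer piece to the half-plane $P_\alpha$, and rotates so that the Robin boundary becomes $\{x_1=0\}$; the transverse ground state is then $e^{-x_1}$, independent of the longitudinal variable, so no analogue of the $\partial_r\psi_1$ cross terms arises (the only cross terms come from the IMS localization potential and are absorbed by Cauchy--Schwarz), and the effective one-dimensional potential $Z_R$ on $\mathbb R$ has finite first moment, so the standard Bargmann estimate $N\le 2+\int\lvert x_2\rvert Z_R\,dx_2$ applies directly. In that parametrization the vertex and the critical $-1/(4r^2)$ are confined to the compact piece and never enter the counting. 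Your radial parametrization is instead close in spirit to the paper's Section~\ref{small1}, where the same critical inverse-square term is tamed via the improved Hardy inequality \eqref{hardy}; if you repair the counting step as indicated, your proof goes through and is a legitimate alternative.
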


\begin{proof}
In view of \eqref{specdisc}, it is sufficient to show that the operator $T^N_\alpha$
has only a finite number of eigenvalues in $(-\infty,-1)$.
During the proof, if $A$ is a self-adjoint operator associated to a semi-bounded from below
sesquilinear form $a$ and $\lambda\in \mathbb R$, we denote
by $N(A,\lambda)$ or $N(a,\lambda)$ the number of the eigenvalues (counting the multiplicities) of $A$ in $(-\infty,\lambda)$ for $\Specess A \cap (-\infty,\lambda)=\emptyset$,
and set $N(A,\lambda)=+\infty$ otherwise.
The proof scheme is inspired by \cite[Theorem 2.1]{ref12}.
The idea is to perform a dimensional reduction in order to compare the operator with a one-dimensional one and to conclude using a Bargmann-type estimate.

We first introduce a decomposition of $U_\alpha^+$.
Let $\chi_0$ and $\chi_1$ be smooth real-valued
functions defined on $\mathbb R_+$ such that
\[
\chi_0(t)=1 \text{ for } \quad 0< t  <1, \quad
\chi_0(t)=0 \text{ for } \quad  t >2, \quad
\chi_0^2+\chi_1^2=1.
\]
For $R>1$, to be determined later, consider the functions $\chi_{0,R}$ and $\chi_{1,R}$ defined on $U^+_\alpha$ by 
\[
\chi_{j,R}(x_1,x_2)\coloneqq\chi_j\Big(\frac{x_2}{R}\Big),\quad j=0,1,
\]
and the following subdomains of $U_\alpha^+$,
\[
A_R=\{ (x_1,x_2) \in U_\alpha^+ :  0<x_2<2R\}, \quad
B_R =\{(x_1,x_2) \in U_\alpha^+ : x_2> R \},
\]
see Fig~\ref{fig:Covering}.
We get easily 
\[t_\alpha^N(u,u)=t_\alpha^N(\chi_{0,R}u,\chi_{0,R}u) + t_\alpha^N(\chi_{1,R}u, \chi_{1,R}u) - \lVert u \nabla\chi_{0,R} \rVert ^2_{L^2(U_{\alpha})} - \lVert u \nabla \chi_{1,R} \rVert ^2_{L^2(U_{\alpha})},\]
for all $u \in D(t_\alpha^N)$, so that
\begin{multline}
t_\alpha^N(u,u) \geq  \int_{A_R} \Big (\lvert \nabla(u \chi_{0,R}) \rvert ^2 - V_R\lvert u \chi_{0,R}\rvert ^2  \Big )dx - \int_{\partial U_\alpha \cap \partial A_R} \lvert u \chi_{0,R} \rvert ^2 ds  \\
            + \int_{B_R} \Big (\lvert \nabla(u \chi_{1,R}) \rvert ^2 - V_R\lvert u \chi_{1,R} \rvert ^2  \Big ) dx - \int_{\partial U_\alpha^+ \cap \partial B_R} \lvert u \chi_{1,R} \rvert ^2 ds,
\label{3.1.3}
\end{multline}
where
\[
V_R(x_1,x_2)= \displaystyle \sum_{j=0,1} \big\lvert \nabla\chi_{i,R}(x_1,x_2)\big\rvert ^2
\equiv
\dfrac{1}{R^2}\displaystyle \sum_{j=0,1} \Big\lvert \chi'_i\Big(\dfrac{x_2}{R}\Big)\Big\rvert^2,
\quad
\|V_R\|_\infty \le \dfrac{C}{R^2}, \quad
C:=\|\chi'_0\|^2_\infty+\|\chi'_1\|^2_\infty.
\]
\begin{figure}
  \centering
		\includegraphics[width=0.60\textwidth]{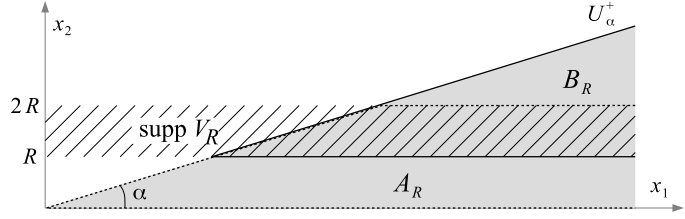}
	\caption{\label{fig:Covering} Covering of $U_\alpha^+$ by the domains $A_R$ (surrounded by the dash line) and $B_R$
	(surrounded by the solid line). The support of $V_R$ is hatched.}
\end{figure}

We define the following sesquilinear forms:
\begin{align*}
q_{A_R}(u,u)&= \int_{A_R} \Big( \lvert \nabla u \rvert ^2 - V _R \lvert u \rvert ^2 \Big ) dx - \int_{\partial U_\alpha \cap\partial A_R} \lvert u \rvert ^2 ds,\\
D(q_{A_R})&=\big \{ u \in H^1(A_R): u(\cdot,2R)=0  \big \},\\
q_{B_R}(u,u)&= \int_{B_R} \Big( \lvert \nabla u \rvert ^2 - V _R \lvert u \rvert ^2 \Big ) dx - \int_{\partial U_\alpha^+ \cap \partial B_R} \lvert u \rvert ^2 ds,\\ 
D(q_{B_R})&=\big \{ u \in H^1(B_R): u(\cdot,R)=0 \big \}.
\end{align*}
Due to
\[
u\chi_0 \in D(q_{A_R}), \quad
u\chi_1 \in D(q_{B_R}), \quad
\lVert u\chi_0\rVert^2_{L^2(A_R)}+\lVert u\chi_1\rVert^2_{L^2(B_R)}=\lVert u\rVert ^2_{L^2(U^+_\alpha)},
\]
the min-max principle and \eqref{3.1.3} give for for any $n\in\mathbb N$:
\begin{align*}
\Lambda_n(T^N_\alpha)&\geq \min_{\substack{G\subset D(t^N_\alpha)\\\dim(G)=n}}\max_{\substack{u\in G\\ u\neq 0}} \frac{q_{A_R}(u\chi_0,u\chi_0)+q_{B_R}(u\chi_1,u\chi_1)}{\lVert u\chi_0\rVert ^2_{L^2(A_R)}+\lVert u\chi_1\rVert^2_{L^2(B_r)}} \\
&\geq \min_{\substack{G\subset D(q_{A_r}\oplus q_{B_R})\\\dim(G)=n}}\max_{\substack{(u_0,u_1)\in G\\ (u_0,u_1)\neq (0,0)}} \frac{q_{A_R}(u_0,u_0)+q_{B_R}(u_1,u_1)}{\lVert u_0\rVert ^2_{L^2(A_R)}+\lVert u_1\rVert^2_{L^2(B_r)}}\\
&=\Lambda_n(Q_{A_R}\oplus Q_{B_R}),
\end{align*}
where $Q_{A_R}$ and $Q_{B_R}$ are the self-adjoint operators acting respectively
in $L^2(A_R)$ and $L^2(B_R)$ and produced by the forms $q_{A_R}$
and $q_{B_R}$. Then,
\begin{align}
\label{3.1.4}
N(T_\alpha^N, - 1) \leq N(q_{A_R},- 1) + N(q_{B_R}, - 1 ).
\end{align}

Let us first estimate $N(q_{A_R},- 1)$. We consider the following two domains:
\[
A^0_R=\Big \{(x_1,x_2) \in A_R: 0<x_1 < \frac{2R}{\tan\alpha} \Big \}, \quad
A^1_R=\Big \{(x_1,x_2) \in A_R: x_1>\frac{2R}{\tan\alpha} \Big \},
\]
and the sesquilinear forms
\begin{align*}
q_{A^0_R}(u,u)&=\int_{A^0_R} \Big ( \lvert \nabla u \rvert ^2 - \lvert u \rvert ^2 V_R \Big) dx - \int_{\partial A^0_R \cap \partial U_{\alpha}} \lvert u \rvert ^2 ds, \quad  D(q_{A^0_R})= H^1(A^0_R), \\
q_{A_R^1} (u) &= \int_{A^1_R} \Big (\lvert \nabla u \rvert ^2 - V_R \lvert u \rvert ^2 \Big ) dx, \quad  D(q_{A^1_R}) = \Big \{ u \in H^1(A^1_R): u(\cdot,2R)=0 \Big \}.
\end{align*}
By  the min-max principle we have
\begin{align}
\label{3.1.5}
N(q_{A_R},- 1) \leq N(q_{A^0_R}, -1 ) + N(q_{A_R^1},- 1).
\end{align}
On one hand, the operator $Q_{A^0_R}$ associated to $q_{A_R^0}$ has a compact resolvent, which implies
\begin{align}
\label{3.1.6}
N(Q_{A_R^0},- 1) < + \infty.
\end{align}
On the other hand, the operators $Q_{A^1_R}$ associated to $q_{A^1_R}$
can be represented as
\[
Q_{A^1_R} = T_N\otimes 1 + 1 \otimes T_{DN}(2R),
\]
where $T_N$ is the Neumann Laplacian in $L^2(2R/ \tan\alpha, + \infty)$ and $T_{DN}(2R)$ is the self-adjoint operator
in $L^2(0,2R)$ associated with the sesquilinear form
\[
t_{DN}(u,u)=\int_{0}^{2R} \Big (\lvert u' (t) \rvert ^2 - V_R(t) \rvert u (t)\rvert ^2 \Big)dt,\quad u\in D(t_{DN})=\Big\{u \in H^1(0,2R), \text{ }u(2R)=0\Big\}.
\]
One has $\inf \Spec T_N = 0$ and $\inf \Spec T_{DN}(2R) \geq - \lVert V_R \rVert_{\infty}$.
Setting $R_1 =\sqrt{C}$ and taking $R>R_1$ implies $\inf\Spec T_{DN}(2R) \geq -1$ and then
\begin{align}
\label{3.1.7}
N(Q_{A^1_{R}},- 1) = 0 \text{ for } R> R_1,
\end{align}
and we conclude by \eqref{3.1.5}, \eqref{3.1.6} and \eqref{3.1.7} that $N(q_{A_R}, - 1) < + \infty$, for all $R> R_1$.

Now let us estimate $N(q_{B_R}, - 1 )$ for $R>R_1$. Let us introduce the sesquilinear form
\[
q^R_{P_\alpha}(u,u)=\int_{P_\alpha}\Big ( \lvert \nabla u \rvert^2 -V_R\lvert u\vert^2\Big) dx - \int_{\mathbb R} \Big\lvert u \Big(\frac{x_2}{\tan \alpha},x_2\Big)\Big \rvert^2 \frac{dx_2}{\sin \alpha}, \quad D(q^R_{P_\alpha})=H^1(P_\alpha),
\]
where $P_\alpha$ is the half-plane given by \eqref{eq-paa}, then $N(q_{B_R},-1)\leq N(q^R_{P_\alpha},-1)$
by the min-max principle.
If we make an anti-clockwise rotation of angle $\frac{\pi}{2}-\alpha$ of $P_\alpha$, then we obtain the half-plane $\mathbb R_+\times\mathbb R$, and the operator $Q^R_{P_\alpha}$ associated with $q^R_{P_\alpha}$ is then unitarily equivalent to
the operators $Q^R$ associated with the sesquilinear form
\[
q^R(u,u)=\int_{\mathbb R_+ \times \mathbb R}\left(\lvert\nabla u\rvert^2-\tilde V_R\lvert u\rvert^2\right)dx-\int_{\mathbb R}\big\lvert u(0,x_2)\big\rvert^2dx_2,\quad u\in D(q^R)=H^1(\mathbb R_+\times \mathbb R),
\]
where
\begin{align*}
\tilde V_R(x_1,x_2)&=V_R(x_1\sin\alpha+x_2\cos\alpha,x_2\sin\alpha-x_1\cos\alpha)
\equiv v_R(x_2\sin \alpha-x_1\cos \alpha ),\\
v_R(t)&=\dfrac{1}{R^2}\displaystyle \sum_{j=1,2} \Big\lvert \chi'_i\Big(\dfrac{t}{R}\Big)\Big\rvert^2
\end{align*}
One has, for $u\in D(q^R)$,
\[
q^R(u,u)=q(u,u)-\int_{\mathbb R_+ \times \mathbb R} \tilde V_R \lvert u \rvert ^2 dx, 
\]
where the operator associated to $q$ is $Q=B_1 \otimes 1 + 1 \otimes L$ with $B_1$ defined in subsection~\ref{halfline} and $L$
the free Laplacian in $L^2(\mathbb R)$. Let us consider the orthogonal projections $\Pi$ and $P$ in $L^2(\mathbb R_+ \times \mathbb R)$,
\begin{align*}
\Pi u(x_1,x_2)&=\sqrt{2} e^{-x_1} \psi(x_2),\quad \psi(x_2)=\sqrt{2}\int_{\mathbb R_+} u(t,x_2) e^{-t} dt, \\
P u &= u - \Pi u.
\end{align*}
Remark that $\Pi=\pi\otimes 1$, where $\pi$ is the spectral projector of $B_1$ on  $\{-1\}$.
For $u \in D(q^R)$ there holds $\Pi u,Pu\in D(q^R)$, and
\[
q^R(u,u)=q^R(\Pi u,\Pi u)+ q^R(P u,Pu ) -2\Re \int_{\mathbb R_+\times\mathbb R} \tilde V_R \overline{\Pi u} Pu  dx,
\]
as $q(\Pi u,P u)=0$ by the spectral theorem. Writing 
\[
W_R(x_2)=2 \int_{\mathbb R_+} e^{-2x_1 } \tilde V_R(x_1,x_2) dx_1
\]
we have
\[
 q^R(\Pi u,\Pi u)= \int_\mathbb R \Big( \lvert \psi'(x_2) \rvert ^2 - W_R(x_2) \lvert \psi (x_2) \rvert ^2 \Big ) dx_2 -
\lVert \psi \rVert ^2_{L^2(\mathbb R)}.
\]
By the spectral theorem applied to $B_1$, for a.e. $x_2\in \mathbb R$ one has,
\[
\displaystyle \int_ {\mathbb R_+}\Big\lvert\frac{\partial P u}{\partial x_1}(x_1,x_2)\Big\rvert ^2 dx_1-
\big\lvert u(0,x_2)\big\rvert ^2 \geq 0,
\]
and, finally,
\[
 q^R(P u,Pu) \geq \int_{\mathbb R} \int_{\mathbb R_+} \Big ( \Big\lvert \frac{\partial P u}{\partial x_2} \Big\rvert ^2 - \tilde V_R \lvert P u \rvert ^2 \Big ) dx.
\]
For any $\epsilon \in\mathbb R_+$ one can estimate
\[
2 \left \lvert\int_{\mathbb R_+ \times \mathbb R } \overline{\Pi u} P u \tilde V_R dx \right \rvert \leq \epsilon \lVert Pu \rVert^2_{L^2(\mathbb R_+\times \mathbb R)} +\frac{1}{\epsilon} \lVert \Pi u \tilde V_R\rVert^2_{L^2(\mathbb R_+\times\mathbb R)}.
\]
Then, using the equality $\lVert \psi \rVert_{L^2(\mathbb R)}= \lVert \Pi u \rVert_{L^2(\mathbb R_+ \times \mathbb R) }$, we get 
\[
 q^R(u,u) \geq \int_{\mathbb R} \Big (\lvert \psi '(x_2) \rvert ^2 - Z_R(x_2) \lvert \psi(x_2) \rvert ^2 \Big ) dx_2 -\lVert \Pi u \rVert ^2_{L^2(\mathbb R_+ \times \mathbb R)} - \Big(\epsilon+ \frac{C}{R^2}\Big) \lVert P u \rVert ^2_{L^2(\mathbb R_+ \times \mathbb R)},
\]
where 
\[
Z_R(x_2)= W_R(x_2)+\frac{1}{\epsilon} \int_{\mathbb R_+} 2 e^{-2 x_1} \tilde V_R ^2 (x_1,x_2) dx_1.
\]
We can choose $R_2>R_1$ and $\epsilon >0$ such that $\epsilon + C/R_2 ^2 \leq 1$,
then for $R>R_2$ one arrives at
\begin{align}
\label{3.1.9}
\ q^{R}(u,u) \geq \int_{\mathbb R} \Big (\lvert \psi ' \rvert ^2 - Z_{R} \lvert \psi \rvert ^2\Big ) dx_2  -\lVert u \rVert ^2 _{L^2(\mathbb R_+ \times \mathbb R)}.
\end{align}
We introduce the sesquilinear form 
\[
a^R(\psi,\psi)= \int_{\mathbb R} \Big (\big\lvert \psi'(x_2) \big\rvert ^2  - Z_{R}(x_2)
\big\lvert \psi (x_2) \big \rvert ^2 \Big ) dx_2, \quad  u\in H^1(\mathbb R),
\]
then, by \eqref{3.1.9} and the min-max principle we have
\begin{align}
\label{3.1.10}
N( q^{R},- 1 ) \leq N(a^R, 0), \quad R>R_2.
\end{align}
In order to show that the number of negative eigenvalues of $a^R$ is finite, we want to use a Bargmann-type estimate, see e.g. \cite[Eq.(8)]{ref4}:
\[
N(a^R,0) \le 2+\int_{\mathbb R} \lvert x_2 \rvert Z_{R}(x_2) dx_2.
\]
We can write, using the fact that $\supp \chi_0'\cup \supp \chi_1' \subset [1,2]$, 
\[
\int_{\mathbb R} \lvert x_2 \rvert Z_{R}(x_2) dx_2 = \int_{\mathbb R_+ } 2 e^{-2 x_1} \left( \int_{\frac{R+ x_1 \cos \alpha}{\sin \alpha}} ^{\frac{2R + x_1 \cos \alpha}{\sin\alpha}} \lvert x_2 \rvert \Big( \tilde V_{R}(x_1,x_2) + \frac{1}{\epsilon}\tilde V_{R} ^2(x_1,x_2) \Big)dx_2 \right)dx_1.
\]
Using the boundedness of $\tilde V_R$ we finally get the following upper bound:
\[
\int_{\mathbb R} \lvert x_2 \rvert Z_{R}(x_2) dx_2 \leq \frac{C}{R \sin^2 \alpha}
\Big(1 + \frac{C}{R^2 \epsilon }\Big)\int_{\mathbb R_+} e^{-2 x_1 } (R+2x_1\cos\alpha )dx_1 < + \infty.
\]
Hence, $N(a^R,0)<+\infty$ and \eqref{3.1.10} implies that $N( q^{R},-1)<+\infty$ for $R>R_2$. By \eqref{3.1.4} we conclude that $N(T^N_\alpha,-1)<+\infty$.
\end{proof}

\subsection{Continuity and monotonicity with respect to the angle}\label{monotone}

Let us discuss first the monotonicity of the Rayleigh quotients of $T_\alpha$
with respect to $\alpha$.
\begin{proposition}\label{prop-mon1}
For any $n\in\mathbb N$ the function
$(0,\frac{\pi}{2})\ni\alpha \mapsto \Lambda_n(T_\alpha)$
is non-decreasing and continuous.
\end{proposition}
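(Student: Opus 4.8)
The plan is to remove the $\alpha$-dependence of the underlying Hilbert space by a change of variables, so that the whole family $(T_\alpha)$ is realized through forms on one fixed space whose only $\alpha$-dependence is an explicit scalar coefficient. Passing to polar coordinates $(r,\theta)$, $r>0$, $\theta\in(-\alpha,\alpha)$, I would straighten the sector by setting $\theta=\alpha\phi$ and rescaling the radial variable through $\rho=\alpha r$. Writing $w(\rho,\phi)=u(\rho/\alpha,\alpha\phi)$, a direct computation shows that, up to the explicit constant-factor unitary $u\mapsto \alpha^{-1/2}w$, the operator $T_\alpha$ is unitarily equivalent to the self-adjoint operator $\mathfrak T_\alpha$ in $\mathcal H=L^2(\Omega,\rho\,d\rho\,d\phi)$, $\Omega=\mathbb R_+\times(-1,1)$, generated by
\[
\mathfrak t_\alpha(w,w)=\alpha^2\!\int_\Omega|\partial_\rho w|^2\rho\,d\rho\,d\phi+\int_\Omega\frac{1}{\rho}|\partial_\phi w|^2\,d\rho\,d\phi-\int_{\mathbb R_+}\!\big(|w(\rho,1)|^2+|w(\rho,-1)|^2\big)\,d\rho.
\]
The space $\mathcal H$, the form domain $\mathcal D$ (the image of $H^1(U_\alpha)$, i.e. the weighted space of $w$ with $\partial_\rho w,\rho^{-1}\partial_\phi w,w\in\mathcal H$), and the value $\inf\Specess\mathfrak T_\alpha=-1$ (by Theorem~\ref{SPEC}(a)) are all independent of $\alpha$. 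In particular $\Lambda_n(T_\alpha)=\Lambda_n(\mathfrak T_\alpha)$ for every $n$.

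Monotonicity is then immediate. For fixed $w\in\mathcal D$ the only $\alpha$-dependent term of $\mathfrak t_\alpha(w,w)$ is $\alpha^2\int_\Omega|\partial_\rho w|^2\rho$, whose coefficient is non-negative and increasing, so $\alpha\mapsto\mathfrak t_\alpha(w,w)$, and hence $\alpha\mapsto R_\alpha(w):=\mathfrak t_\alpha(w,w)/\|w\|_{\mathcal H}^2$, is non-decreasing. Since $Q(\mathfrak T_\alpha)=\mathcal D$ for every $\alpha$, I may apply the min-max principle with the single dense set $Q=\mathcal D$, and the pointwise inequality $R_{\alpha_1}(w)\le R_{\alpha_2}(w)$ for $\alpha_1\le\alpha_2$ passes through $\inf_{G}\sup_{w\in G}$ to yield $\Lambda_n(T_{\alpha_1})\le\Lambda_n(T_{\alpha_2})$.

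For continuity, one direction is soft: each map $\alpha\mapsto\sup_{w\in G}R_\alpha(w)$ is convex and non-decreasing in $\alpha^2$, hence continuous, so $\Lambda_n$, being an infimum of continuous functions, is upper semicontinuous, and together with monotonicity this already gives right-continuity. The opposite (lower semicontinuity) is the crux, since on a near-minimizing subspace the radial energy $\int_\Omega|\partial_\rho w|^2\rho$ need not be bounded, so the identity $R_\alpha(w)-R_{\alpha_0}(w)=(\alpha^2-\alpha_0^2)\int_\Omega|\partial_\rho w|^2\rho\,/\,\|w\|_{\mathcal H}^2$ cannot be controlled uniformly without additional input. I would supply this input through the relative form bound
\[
\int_\Omega|\partial_\rho w|^2\rho\,d\rho\,d\phi\le\kappa\big(\mathfrak t_{\alpha_0}(w,w)+C\|w\|_{\mathcal H}^2\big),\qquad w\in\mathcal D,
\]
with $\kappa,C$ locally uniform in $\alpha_0$, which in turn rests on the trace inequality on $U_\alpha$ with relative bound zero, $\int_{\partial U_\alpha}|u|^2\,ds\le\varepsilon\int_{U_\alpha}|\nabla u|^2+C_\varepsilon\int_{U_\alpha}|u|^2$ for every $\varepsilon>0$. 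The latter I would obtain by the classical vector-field argument: the constant field $F=(-1/\sin\alpha,0)$ satisfies $F\cdot\nu=1$ on both boundary rays and $\operatorname{div}F=0$, whence $\int_{\partial U_\alpha}|u|^2\,ds=\int_{U_\alpha}2\,\Re(\bar u\,\nabla u\cdot F)\,dx$, and Young's inequality gives the claim with $C_\varepsilon\sim(\varepsilon\sin^2\alpha)^{-1}$; transporting it to the reference coordinates produces precisely the bound above. Inserting this into the identity and running the $\inf_G\sup_{w\in G}$ yields the two-sided estimate $\Lambda_n(\alpha_0)\le\Lambda_n(\alpha)\le(1+\eta)\Lambda_n(\alpha_0)+\eta C$ with $\eta=(\alpha^2-\alpha_0^2)\kappa\to0$ as $\alpha\to\alpha_0$ (and the symmetric estimate for $\alpha<\alpha_0$ using $\mathfrak t_\alpha$ as reference), which closes the continuity. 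The main obstacle is exactly this lower-semicontinuity step, and the entire difficulty is concentrated in the relative-bound-zero trace inequality that tames the otherwise uncontrolled radial energy.
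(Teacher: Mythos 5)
Your proof is correct, and its core mechanism is the same as the paper's: a change of variables onto a fixed Hilbert space and fixed form domain, so that $\alpha$ enters only through a monotone continuous scalar coefficient multiplying a non-negative part of the form, after which monotonicity of $\Lambda_n$ drops out of the min-max principle. The implementations differ, though. The paper first reduces by parity to $T^N_\alpha$, rotates the half-sector so the Robin edge sits on $\{x_1=0\}$, and applies the one-variable scaling $t=x_2\tan\alpha$ onto the fixed domain $\tilde U^+_{\pi/4}$, producing the coefficient $\tan^2\alpha$ on the $|\partial_t v|^2$ term; you instead scale in polar coordinates ($\theta=\alpha\phi$, $\rho=\alpha r$), landing in a weighted $L^2$ on the fixed strip $\mathbb R_+\times(-1,1)$ with coefficient $\alpha^2$ on the radial energy — essentially the same transformation the paper itself uses later for $L_\alpha$ in the proof of Theorem~\ref{full} — and this lets you skip the parity reduction and the rotation altogether. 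The more substantive difference concerns continuity: the paper dismisses it in one sentence (``which gives the result due to the min-max principle''), whereas you correctly observe that only upper semicontinuity (hence, with monotonicity, right-continuity) is soft, and that lower semicontinuity needs control of the unbounded radial energy on near-minimizing subspaces. Your fix — the relative form bound $\int_\Omega|\partial_\rho w|^2\rho\le\kappa(\mathfrak t_{\alpha_0}(w,w)+C\|w\|^2)$, derived from the relative-bound-zero trace inequality, which you prove by the vector-field identity with $F=(-1/\sin\alpha,0)$ (the paper obtains the same estimate \eqref{trace1} in Appendix~\ref{appa2} by slicing and the one-dimensional bound \eqref{2.1.1}) — is sound, and the resulting two-sided estimate $\Lambda_n(\alpha_0)\le\Lambda_n(\alpha)\le(1+\eta)\Lambda_n(\alpha_0)+\eta C$ with $\eta\to0$ closes the argument cleanly; the symmetric estimate handles left-continuity. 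One tiny point you leave implicit in the soft direction: finiteness of $\sup_{w\in G,\|w\|=1}\int_\Omega|\partial_\rho w|^2\rho$ on a fixed finite-dimensional $G\subset\mathcal D$, which holds because a finite-valued quadratic form on a finite-dimensional space is bounded on its unit sphere. In short: same strategy as the paper with a different (and arguably more economical) change of variables, plus a fully detailed continuity proof where the paper relies on a standard but unrecorded argument.
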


\begin{proof}
In view of the constructions of subsection~\ref{parity}
it is sufficient to show the result for the Rayleigh quotients $\Lambda_n(T^N_\alpha)$
of $T^N_\alpha$.
We denote by $\tilde t^N_\alpha$ the sesquilinear form obtained after the anti-clockwise rotation of angle $\frac{\pi}{2}-\alpha$ of $U_\alpha^+$. Then $\tilde t^N_\alpha$ is unitarily equivalent to $t^N_\alpha$ and we have
\[
\tilde t_\alpha^N(g,g)=\int_{\tilde U_\alpha^+} \lvert \nabla g \rvert^2 dx - \int_{\mathbb R_+}\lvert g\rvert^2(0,x_2)dx_2,
\]
for all $g\in H^1(\tilde U_\alpha^+)$, where $\tilde U_\alpha^+=\Big\{(x_1,x_2)\in(\mathbb R_+)^2, x_1\leq x_2\tan\alpha\Big\}$. After the scaling $t=x_2\tan\alpha$ and writing  $\tilde g(x_1,t)=g(x_1,\frac{t}{\tan\alpha})$ we have
\[
\tilde t_\alpha^N(g,g)=\int_{\tilde U_{\frac{\pi}{4}}^+}\left ( \lvert \partial_{x_1} \tilde g (x_1,t)\rvert^2+\tan^2\alpha \lvert \partial_t \tilde g(x_1,t)\rvert^2 \right)\frac{dx_1dt}{\tan\alpha}-\int_{\mathbb R^+} \lvert \tilde g(0,t)\rvert ^2\frac{dt}{\tan\alpha},
\]
and $\lVert g \rVert ^2_{L^2(\tilde U_\alpha^+)}=\displaystyle\frac{1}{\tan\alpha} \lVert \tilde g\rVert^2_{L^2(\tilde U^+_{\frac{\pi}{4}})}$. 
Then, we can define 
\[
q_\alpha(v,v)=\int_{\tilde U_{\frac{\pi}{4}}^+}\left ( \lvert \partial_{x_1} v (x_1,t)\rvert^2+\tan^2\alpha \lvert \partial_t  v(x_1,t)\rvert^2 \right)dx_1dt-\int_{\mathbb R^+} \lvert v(0,t)\rvert ^2dt,
\]
for $v\in D(q_\alpha)=H^1(\tilde U_{\frac{\pi}{4}}^+)$ and $Q_\alpha$ the associated operator
in $L^2(\tilde U_{\frac{\pi}{4}}^+)$.
By construction, we have $\Lambda_n(T_\alpha)=\Lambda_n(Q_\alpha)$ .
The dependence  of $Q_\alpha$ on $\alpha$ only appears through the coefficient
$(\tan \alpha)^2$, which gives the result due to the min-max principle.
\end{proof}

Let us now obtain a stronger result for the eigenvalues.

\begin{theorem}\label{increase}
Assume that for some $n\ni\mathbb N$ and $\alpha_n\in (0,\frac\pi 2)$ the operator $T_{\alpha_n}$
has at least $n$ discrete eigenvalues, then $T_\alpha$ has at least $n$ discrete eigenvalues
for all $\alpha<\alpha_n$, and the function $(0,\alpha_n)\ni \alpha\mapsto E_n(T_\alpha)$
is strictly increasing.
\end{theorem}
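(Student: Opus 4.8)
My plan is to reduce both assertions to the variational representation obtained in the proof of Proposition~\ref{prop-mon1}. Recall that after the rotation by $\frac\pi2-\alpha$ and the scaling $t=x_2\tan\alpha$, the operator $T_\alpha$ becomes (through $T^N_\alpha$) unitarily equivalent to the operator $Q_\alpha$ in $L^2(\tilde U_{\frac\pi4}^+)$ generated by
\[
q_\alpha(v,v)=\int_{\tilde U_{\frac\pi4}^+}\Big(|\partial_{x_1}v|^2+\tan^2\alpha\,|\partial_t v|^2\Big)\,dx_1\,dt-\int_{\mathbb R_+}|v(0,t)|^2\,dt,
\]
with $\Lambda_n(T_\alpha)=\Lambda_n(Q_\alpha)$; since the scaling $v\mapsto\tilde v$ is a dilation (unitary up to a constant), $Q_\alpha$ is unitarily equivalent to $T^N_\alpha$, so $\Specess Q_\alpha=[-1,+\infty)$ and the eigenvalues of $Q_\alpha$ below $-1$ are genuine, with eigenfunctions. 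The whole $\alpha$-dependence sits in the coefficient $\tan^2\alpha$ of the \emph{non-negative} term $\|\partial_t v\|^2$, and $\tan$ is strictly increasing on $(0,\frac\pi2)$; this is the structural fact I will exploit.

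For the existence part I would argue as follows. The hypothesis that $T_{\alpha_n}$ has at least $n$ discrete eigenvalues means $\Lambda_n(T_{\alpha_n})<-1=\Sigma$, so by the min-max principle $E_n(T_{\alpha_n})=\Lambda_n(T_{\alpha_n})$. By the non-decreasing monotonicity of Proposition~\ref{prop-mon1}, $\Lambda_n(T_\alpha)\le\Lambda_n(T_{\alpha_n})<-1$ for every $\alpha<\alpha_n$, and the min-max principle again gives $E_n(T_\alpha)=\Lambda_n(T_\alpha)<-1$. Hence $T_\alpha$ has at least $n$ discrete eigenvalues, and $E_n(T_\alpha)=\Lambda_n(Q_\alpha)$, for all $\alpha\in(0,\alpha_n)$.

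For the strict monotonicity I fix $\alpha<\beta<\alpha_n$; it suffices to show $\Lambda_n(Q_\alpha)<\Lambda_n(Q_\beta)$. The elementary identity
\[
q_\alpha(v,v)=q_\beta(v,v)-(\tan^2\beta-\tan^2\alpha)\,\|\partial_t v\|^2_{L^2(\tilde U_{\frac\pi4}^+)},\qquad v\in H^1(\tilde U_{\frac\pi4}^+),
\]
has strictly positive coefficient $\tan^2\beta-\tan^2\alpha$ and already yields the non-strict inequality $\Lambda_n(Q_\alpha)\le\Lambda_n(Q_\beta)$. To upgrade it I would argue by contradiction, assuming $\Lambda_n(Q_\alpha)=\Lambda_n(Q_\beta)$. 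Since $E_n(T_\beta)=\Lambda_n(Q_\beta)<-1$, the operator $Q_\beta$ has $n$ eigenvalues below its essential spectrum; I take orthonormal eigenfunctions $v_1,\dots,v_n$ and set $G:=\Span\{v_1,\dots,v_n\}$, so that $q_\beta(v,v)\le\Lambda_n(Q_\beta)\|v\|^2$ for all $v\in G$. Plugging $G$ into the min-max for $Q_\alpha$ and using compactness of the unit sphere of the finite-dimensional space $G$, I obtain a maximizer $v^*\in G$ with $\|v^*\|=1$ and $\Lambda_n(Q_\alpha)\le q_\alpha(v^*,v^*)$; combined with the identity above and $\Lambda_n(Q_\alpha)=\Lambda_n(Q_\beta)$ this forces $\|\partial_t v^*\|^2\le0$, i.e.\ $\partial_t v^*=0$.

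The last step, which I expect to be the crux, rules out such a $v^*$: a function in $L^2(\tilde U_{\frac\pi4}^+)$ with $\partial_t v^*=0$ is a.e.\ independent of $t$, but every vertical slice $\{x_1=\text{const}\}$ of $\tilde U_{\frac\pi4}^+$ has infinite length in the $t$-direction, so square-integrability forces $v^*=0$, contradicting $\|v^*\|=1$. This non-compactness of the domain is precisely what promotes the fixed-vector strict inequality into a strict inequality of the min-max values; restricting the min-max of $Q_\alpha$ to the eigenspace $G$ of $Q_\beta$ is the device that squeezes the "$\ge\Lambda_n$" and "$\le\Lambda_n$" bounds together and isolates the term $\|\partial_t v^*\|^2$. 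I therefore expect the main obstacle to be not the monotonicity itself (immediate from the sign of $\tan^2\beta-\tan^2\alpha$) but the bookkeeping making the maximizer argument legitimate and ensuring that $\partial_t v^*=0$ genuinely contradicts $v^*\in L^2$.
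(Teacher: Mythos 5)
Your proposal is correct and follows essentially the same route as the paper's own proof: the existence statement via the monotonicity of the Rayleigh quotients from Proposition~\ref{prop-mon1} together with the min-max principle, and the strict monotonicity via the identity $q_\alpha(v,v)=q_\beta(v,v)-(\tan^2\beta-\tan^2\alpha)\lVert\partial_t v\rVert^2$ tested on the span of the first $n$ eigenfunctions of $Q_\beta$, with finite-dimensional compactness producing an extremizer $v^*$ satisfying $\partial_t v^*=0$, which is incompatible with $v^*\in L^2(\tilde U^+_{\frac{\pi}{4}})$ and $\lVert v^*\rVert=1$. The only cosmetic difference is that the paper extracts a minimizer of the quotient $\lVert\partial_t\varphi\rVert^2/\lVert\varphi\rVert^2$ over the eigenspace rather than a maximizer of $q_\alpha$ on its unit sphere, which is the same argument rearranged.
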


\begin{proof}
If $T_{\alpha_n}$ has at least $n$ discrete eigenvalues, then by the min-max principle one
has $\Lambda_n(T_{\alpha_n})<-1$, which by Proposition~\ref{prop-mon1} implies
$\Lambda_n(T_\alpha)<-1$ for all $\alpha<\alpha_n$, hence $\Lambda_n(T_\alpha)$
is the $n$th discrete eigenvalue of $T_\alpha$ by the min-max principle.
The weak monotonicity and continuity also follow from Proposition~\ref{prop-mon1}.

Let us show the strict monotonicity of the eigenvalues.
Let $ \alpha_1$, $\alpha_2$ such that $\alpha_1<\alpha_2$
and $E_n(T_{\alpha_2})<-1$. We continue using the notation
of the proof of Proposition~\ref{prop-mon1}, then $E_n(Q_{\alpha_2})=E_n(T_{\alpha_2})<-1$,
and we need to show the strict inequality
$E_n(Q_{\alpha_1})<E_n\big(Q_{\alpha_2})$.
For all $\varphi \in H^1(\tilde U^+_{\frac{\pi}{4}})$ one has, with
$\kappa=\tan^2 \alpha_2-\tan^2 \alpha_1>0$,
\begin{align}
\label{sc1}
\frac{q_{\alpha_1}(\varphi,\varphi)}{\lVert \varphi \rVert ^2_{L^2(\tilde U_{\frac{\pi}{4}}^+)}}= \frac{q_{\alpha_2}(\varphi,\varphi)}{\lVert \varphi \rVert ^2_{L^2(\tilde U^+_{\frac{\pi}{4}})}} - \kappa \frac{\displaystyle \int_{\tilde U_{\frac{\pi}{4}}^+}\lvert \partial_t \varphi \rvert^2 dx_1dt}{\lVert \varphi\rVert^2_{L^2(\tilde U_{\frac{\pi}{4}}^+)}}.
\end{align}
Let $\varphi_1,...,\varphi_n$ be an orthonormal basis
in
$\mathcal K_n:=\sum_{k=1}^n
\Ker\big(Q_{\alpha_2}-E_k(Q_{\alpha_2})\big)$.
On one hand, for all $\varphi\in \mathcal K_n\setminus\{0\}$ we have
\[
\frac{q_{\alpha_2}(\varphi,\varphi)}{\lVert \varphi \rVert ^2_{L^2(\tilde U^+_{\frac{\pi}{4}})}}\le E_n(Q_{\alpha_2}).
\]
On the other hand, using the min-max principle and \eqref{sc1} we have
\[
E_n(Q_{\alpha_1})\leq \sup_{\substack{\varphi \in \mathcal K_n \\ \varphi \neq 0}}\frac{q_{\alpha_1}(\varphi,\varphi)}{\lVert \varphi \rVert ^2_{L^2(\tilde U_{\frac{\pi}{4}}^+)}}
\le
E_n(Q_{\alpha_2})-\kappa \inf_{\substack{\varphi \in \mathcal K_n\setminus\{0\} \\ \varphi \neq 0}} \frac{\displaystyle \int_{\tilde U_{\frac{\pi}{4}}^+}\lvert \partial_t \varphi \rvert^2 dx_1dt}{\lVert \varphi\rVert^2_{L^2(\tilde U_{\frac{\pi}{4}}^+)}}.
\]
Assume that $E_n(Q_{\alpha_1})=E_n(Q_{\alpha_2})$, then the second term on the right-hand side is zero.
As the unit ball of $\mathcal K_n$ is compact, there must exist
$\varphi\in\mathcal K_n$ with $\|\varphi\|=1$
such that
\[
\int_{\tilde U_{\frac{\pi}{4}}^+}\lvert \partial_t \varphi \rvert^2 dx_1dt=0,
\]
i.e. $\partial_t \varphi(x_1,t)=0$. Then $\varphi$ depends on the $x_1$ variable only, but as
$\varphi \in L^2(\tilde U^+_{\frac{\pi}{4}})$ we necessarily have 
$\varphi=0$, which contradicts the normalization $\|\varphi\|=1$.
\end{proof}

Another important corollary is as follows: 

\begin{Corollaires}\label{corol123}
Assume that for some $\alpha_1$ the operator $T_{\alpha_1}$
has a unique discrete eigenvalue, then $T_\alpha$
has a unique discrete eigenvalue for all $\alpha\in \big(\alpha_1, \frac{\pi}{2}\big)$.
\end{Corollaires}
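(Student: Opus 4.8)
The plan is to translate the statement into a counting of Rayleigh quotients and then exploit the weak monotonicity of Proposition~\ref{prop-mon1} together with the universal bound $\Lambda_n(A)\le\Sigma$ furnished by the min-max principle. By Theorem~\ref{SPEC}(a) we have $\inf\Specess T_\alpha=-1$ for every $\alpha\in(0,\frac\pi2)$, so the number of discrete eigenvalues of $T_\alpha$ equals the number of indices $n$ for which $\Lambda_n(T_\alpha)<-1$.

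First I would record the consequence of the hypothesis. Since $T_{\alpha_1}$ has exactly one discrete eigenvalue, one has $\Lambda_1(T_{\alpha_1})<-1$, whereas $\Lambda_2(T_{\alpha_1})=-1$: indeed the min-max principle gives $\Lambda_2(T_{\alpha_1})\le\Sigma=-1$, and a strict inequality $\Lambda_2(T_{\alpha_1})<-1$ would produce a second discrete eigenvalue, contradicting uniqueness.

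The key step is then a squeezing argument for the second Rayleigh quotient. By Proposition~\ref{prop-mon1} the map $\alpha\mapsto\Lambda_2(T_\alpha)$ is non-decreasing, hence for every $\alpha\in(\alpha_1,\frac\pi2)$ we have $\Lambda_2(T_\alpha)\ge\Lambda_2(T_{\alpha_1})=-1$. On the other hand the min-max principle again yields $\Lambda_2(T_\alpha)\le\Sigma=-1$. Combining the two bounds forces $\Lambda_2(T_\alpha)=-1$, so $T_\alpha$ admits no second discrete eigenvalue below $-1$. Finally, Theorem~\ref{SPEC}(b) guarantees that $E_1(T_\alpha)=-1/\sin^2\alpha<-1$ is always a discrete eigenvalue for $\alpha<\frac\pi2$, whence $T_\alpha$ has exactly one discrete eigenvalue, as claimed.

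I do not expect a serious obstacle here: the only point that needs care is the borderline case $\Lambda_2=\Sigma=-1$, where the merely \emph{weak} monotonicity of the Rayleigh quotients still suffices because it is paired with the universal upper bound $\Lambda_2\le\Sigma$. This is precisely why I would phrase the argument in terms of Rayleigh quotients rather than invoking the strict monotonicity of Theorem~\ref{increase}, which applies only to genuine discrete eigenvalues and is no longer available once a second eigenvalue has merged into the essential spectrum.
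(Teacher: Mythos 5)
Your proof is correct and is essentially the paper's argument made explicit: the paper deduces the corollary as the contrapositive of the first part of Theorem~\ref{increase}, which itself rests on exactly the weak monotonicity of $\alpha\mapsto\Lambda_n(T_\alpha)$ from Proposition~\ref{prop-mon1} combined with the min-max dichotomy $\Lambda_n\le\Sigma$, together with Theorem~\ref{SPEC}(b) for the existence of the first eigenvalue. Your closing remark on why strict monotonicity is neither needed nor applicable once $\Lambda_2$ reaches the essential spectrum is accurate.
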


A natural candidate for $\alpha_1$ is $\frac{\pi}{4}$. In fact,
the respective operator $T_{\frac\pi 4}$ admits a separation of variables
and is unitarily equivalent to $B_1\otimes 1+1 \otimes B_1$
with $B_1$ given in subsection~\ref{halfline}.
Hence, $T_{\frac\pi 4}$ has a unique discrete eigenvalue $(-2)$,
which shows

\begin{Corollaires}\label{oneeigenV}
For  $\alpha \in [\frac{\pi}{4}, \frac{\pi}{2})$,
the operator $T_\alpha$ admits a unique discrete eigenvalue.
\end{Corollaires}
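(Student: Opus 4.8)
The plan is to reduce the claim on the whole interval $[\frac\pi4,\frac\pi2)$ to a single explicit computation at the endpoint $\alpha=\frac\pi4$. Indeed, Corollary~\ref{corol123} already tells us that, once $T_{\pi/4}$ is known to possess exactly one discrete eigenvalue, the same holds for every $\alpha\in(\frac\pi4,\frac\pi2)$; adjoining the endpoint itself then yields the full range $[\frac\pi4,\frac\pi2)$. So the entire content of the statement is concentrated in analysing the threshold operator $T_{\pi/4}$.

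First I would exhibit the separation of variables for $T_{\pi/4}$. The sector $U_{\pi/4}$ has opening $\frac\pi2$ and, after an anti-clockwise rotation by the angle $\frac\pi4$, it is mapped onto the quadrant $\mathbb R_+\times\mathbb R_+$, the two bounding rays going to the two coordinate half-axes. Under this isometry the Robin condition $\partial_\nu u=u$ (recall $\gamma=1$, since $T_\alpha=T^1_\alpha$) becomes $-\partial_{x_1}u=u$ on $\{x_1=0\}$ and $-\partial_{x_2}u=u$ on $\{x_2=0\}$, which is exactly the boundary condition defining $B_1$ in each variable. Since $-\Delta=-\partial_{x_1}^2-\partial_{x_2}^2$ and both the differential expression and the boundary conditions split variable by variable, the operator is unitarily equivalent to $B_1\otimes 1+1\otimes B_1$ on $L^2(\mathbb R_+)\otimes L^2(\mathbb R_+)$. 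The cleanest way to make this rigorous is at the level of the quadratic form, checking that the rotated form and its $H^1$ domain factorise, the boundary term being precisely the sum of the two copies of $-|u(0,\cdot)|^2$ and $-|u(\cdot,0)|^2$ from $b_1$ in subsection~\ref{halfline}.

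Next I would read off the spectrum from the tensor structure. Recall that $\Spec B_1=\{-1\}\cup[0,+\infty)$, the value $-1$ being a simple eigenvalue with eigenfunction $e^{-t}$. For a tensor sum one has $\Spec(B_1\otimes 1+1\otimes B_1)=\overline{\Spec B_1+\Spec B_1}$, which here equals $\{-2\}\cup[-1,+\infty)$: the only way to land strictly below $-1$ is to add the two isolated values, $-1+(-1)=-2$. Hence $\Specess T_{\pi/4}=[-1,+\infty)$, in agreement with Theorem~\ref{SPEC}(a), and the discrete spectrum reduces to the single simple eigenvalue $-2$ (with eigenfunction $e^{-x_1-x_2}$), consistent with $E_1(T_{\pi/4})=-1/\sin^2\frac\pi4=-2$ from Theorem~\ref{SPEC}(b). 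This shows that $T_{\pi/4}$ has a unique discrete eigenvalue, and Corollary~\ref{corol123} completes the argument.

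The only genuine obstacle is the first step, namely making the separation of variables precise. The spectral computation for the tensor sum is standard and the final invocation of Corollary~\ref{corol123} is immediate, so the real work lies in verifying that the rotated form $t_{\pi/4}$ together with its $H^1$ form domain truly decomposes as the form sum of two copies of $b_1$; once the boundary term is correctly identified this is routine.
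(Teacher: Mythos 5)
Your proposal is correct and follows the paper's own argument exactly: the paper likewise observes that $T_{\pi/4}$ separates variables after rotation to the quadrant, identifies it with $B_1\otimes 1+1\otimes B_1$ whose only discrete eigenvalue is $-2$, and then invokes Corollary~\ref{corol123} to cover $\alpha\in(\frac{\pi}{4},\frac{\pi}{2})$. Your additional care in verifying the form-level factorisation and the tensor-sum spectral identity $\overline{\Spec B_1+\Spec B_1}=\{-2\}\cup[-1,+\infty)$ merely fills in details the paper leaves implicit.
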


In fact, we can obtain a better estimate:

\begin{theorem}\label{thmpi6}
The operator $T_\alpha$ has a unique discrete eigenvalue
for  $\alpha \in \big[\frac\pi 6,\frac \pi 2\big)$.
\end{theorem}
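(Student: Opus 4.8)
The plan is to reduce the whole statement to a single spectral estimate at the critical angle $\alpha=\frac\pi6$. By Corollary~\ref{corol123} it suffices to prove that $T_{\pi/6}$ has a unique discrete eigenvalue, since uniqueness then propagates automatically to every $\alpha\in(\frac\pi6,\frac\pi2)$; the endpoint is covered because the estimate is performed at $\alpha=\frac\pi6$ itself. (Equivalently, one may invoke the monotonicity of the Rayleigh quotient $\Lambda_2(T_\alpha)$ from Proposition~\ref{prop-mon1}: once $\Lambda_2(T_{\pi/6})\ge-1$ is known, $\Lambda_2(T_\alpha)\ge-1$ follows for all $\alpha\ge\frac\pi6$.) Using the parity decomposition of subsection~\ref{parity} together with \eqref{specdisc}, the discrete eigenvalues of $T_{\pi/6}$ lying below $-1$ are exactly those of the even part $T_{\pi/6}^N$, so the theorem reduces to the inequality $\Lambda_2(T_{\pi/6}^N)\ge-1$, i.e. to showing that $T_{\pi/6}^N$ has no eigenvalue in $\big(E_1(T_{\pi/6}),-1\big)=(-4,-1)$.

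To exploit the tightness of this bound I would use a ground-state substitution. The function $u_0(x_1,x_2)=\exp(-x_1/\sin\alpha)$ from Theorem~\ref{SPEC}(b) is the positive ground state of $T_\alpha^N$ with eigenvalue $E_1=-1/\sin^2\alpha$, and it satisfies the Robin condition on the lateral ray as well as the Neumann condition on the symmetry axis. Writing an even trial function as $\varphi=u_0w$ and integrating by parts gives the ground-state representation
\[
t_\alpha^N(\varphi,\varphi)=E_1\,\lVert\varphi\rVert^2+\int_{U_\alpha^+}u_0^2\,\lvert\nabla w\rvert^2\,dx,
\]
where all boundary terms cancel precisely because $u_0$ obeys the boundary conditions. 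Taking $\psi=u_0$ as the single trial vector in the min-max formula for $\Lambda_2(T_\alpha^N)$ and using $\langle\varphi,u_0\rangle=\int_{U_\alpha^+}u_0^2w\,dx$, the inequality $\Lambda_2(T_\alpha^N)\ge-1$ becomes the weighted Poincaré inequality
\[
\int_{U_\alpha^+}u_0^2\,\lvert\nabla w\rvert^2\,dx\ \ge\ \cot^2\alpha\int_{U_\alpha^+}u_0^2\,\lvert w\rvert^2\,dx
\]
for all $w$ with $\int_{U_\alpha^+}u_0^2w\,dx=0$, where I used $E_1+1=-\cot^2\alpha$. At $\alpha=\frac\pi6$ the constant is $\cot^2\frac\pi6=3$.

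It remains to establish this weighted Poincaré inequality, which is the heart of the matter. Here I would pass to the straightened coordinates of Proposition~\ref{prop-mon1} (or, equivalently, fiber the sector in the radial variable), so that the domain becomes a product-type region and the weight $u_0^2$ takes an explicit form. Splitting $w$ into its transverse (angular) average and the orthogonal remainder, the remainder is controlled by a one-dimensional Neumann--Poincaré inequality on each transverse slice, while the average is governed by an explicit one-dimensional weighted operator closely related to the interval operators $B_{L,\gamma}$ analysed in subsection~\ref{interval}. Matching the two contributions pins down the admissible constant, and the value $3$, equivalently the angle $\frac\pi6$, appears exactly as the threshold for which the combined lower bound still reaches $\cot^2\alpha$.

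The main obstacle is the sharpness of this last inequality. The constant $\cot^2\alpha$ is not an artifact: it equals $\inf\Specess$ of the operator obtained after the ground-state transform, so the weighted Poincaré inequality is saturated at the very bottom of the essential spectrum and no lossy estimate can succeed. Moreover the geometry is genuinely non-product, since the width of the transverse slices grows linearly with the radial variable whereas the weight $u_0^2$ decays only in the radial direction; consequently the transverse Poincaré constant degenerates at infinity and a naive tensorization fails. The estimate must therefore treat the radial and angular directions jointly, and it is precisely this coupled one-dimensional reduction, carried out so as not to lose the constant, that constitutes the real work behind the improvement from $\frac\pi4$ to $\frac\pi6$.
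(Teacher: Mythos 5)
Your reductions are all correct: Corollary~\ref{corol123} (or the monotonicity of $\Lambda_2$ from Proposition~\ref{prop-mon1}) legitimately reduces the theorem to the single estimate $\Lambda_2(T_{\pi/6})\ge -1$, the parity decomposition and \eqref{specdisc} reduce this to $T^N_{\pi/6}$, and the ground-state substitution $\varphi=u_0 w$ with $u_0=e^{-x_1/\sin\alpha}$ is a valid reformulation: the identity $t^N_\alpha(\varphi,\varphi)=E_1\|\varphi\|^2+\int_{U_\alpha^+}u_0^2|\nabla w|^2\,dx$ holds because $u_0$ satisfies the Robin and Neumann conditions, and taking $\psi=u_0$ in the min-max formula does yield that the weighted Poincar\'e inequality $\int u_0^2|\nabla w|^2\,dx\ge\cot^2\alpha\int u_0^2|w|^2\,dx$ on $\{\int u_0^2w\,dx=0\}$ implies the theorem. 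But the proposal stops exactly where the proof must begin: this inequality, with the constant $3$ at $\alpha=\frac\pi6$, is never proved, and the mechanism you sketch for it (angular average plus a slice-wise Neumann--Poincar\'e bound on the remainder) is precisely the one you concede fails in your last paragraph. The difficulty is real and fatal as stated: since $\Lambda_2(T_\alpha)\le\inf\Specess T_\alpha=-1$ always, at $\alpha=\frac\pi6$ one in fact has $\Lambda_2=-1$, so the target constant is saturated by the essential spectrum and no lossy step is permitted; meanwhile the transverse slices have width growing linearly in the radial variable, so the slice-wise spectral gap degenerates at infinity and a tensorized estimate loses a fixed amount. Your claim that ``the value $3$ appears exactly as the threshold'' of the matched one-dimensional reduction is the theorem restated, not an argument --- nothing in the scheme singles out $\frac\pi6$. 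Indeed, the closely analogous mean-plus-remainder reduction carried out in Section~\ref{small1} produces the effective one-dimensional operator only up to $O(1)$ errors, which is hopeless at a fixed angle where the constant must be exact.

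The paper proves the borderline estimate by a mechanism your analytic scheme cannot see: a hidden symmetry specific to $\frac\pi6$. After Neumann bracketing along the lines $x_2=L$ and $x_1=L/\sqrt3$ in the rotated half-sector, the relevant piece is a right triangle $\Omega_L$ with half-angle $\frac\pi6$, and six reflected copies of $\Omega_L$ tile an \emph{equilateral} triangle $2L\Theta$; every eigenfunction of the bracketed operator $A_{1,L}$ unfolds to a fully symmetric eigenfunction of the Robin Laplacian $K_{2L}$ on $2L\Theta$. McCartin's explicit spectral analysis of the Robin equilateral triangle then shows that the second eigenvalue of $K_{2L}$ is doubly degenerate with no fully symmetric eigenfunction, whence $\Lambda_2(A_{1,L})\ge\Lambda_4(K_{2L})$ and $\liminf_{L\to\infty}\Lambda_4(K_{2L})\ge-1$, contradicting $\Lambda_2(T_{\pi/6})<-1$. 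In other words, the exactness at $\frac\pi6$ is certified by a tiling symmetry and known model spectra, not by a quantitative one-dimensional reduction; to complete your route you would need an independent sharp proof of the weighted Poincar\'e inequality, which is an open step in your proposal, not a technicality.
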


\begin{proof}
In view of Corollary~\ref{corol123} it is sufficient to consider $\alpha=\frac\pi6$.
We continue using the domain $\Tilde U^+_\alpha$
from the proof of Proposition \ref{prop-mon1}.
Let $Q^L_\alpha$ be the Laplacian in $L^2(\Tilde U^+_{\frac{\pi}{6}})$
with the Robin boundary condition $\partial u/\partial \nu=u$
at $x_1=0$, the Neumann boundary condition
at the line $x_1=x_2\tan \alpha$ and with the Neumann boundary condition
at the both sides of the lines $x_2=L$ and $x_1=L/\sqrt 3$,
then by the min-max principle for any $L>0$ and $k\in \mathbb N$ one has the inequality
\begin{equation}
     \label{eq-lll1}
\Lambda_k(T_{\frac \pi 6})=\Lambda_k(Q_{\frac \pi 6})\ge \Lambda_k(Q_{\frac \pi 6}^L).
\end{equation}
Let us argue by contradiction. Assume that $\Lambda_2(T_{\frac \pi 6})<-1$, then it follows
from \eqref{eq-lll1} that
\begin{equation}
       \label{eq-lll2}
\limsup_{L\to+\infty}\Lambda_2(Q_{\frac \pi 6}^L)< -1.
\end{equation}
Remark that $Q_{\frac \pi 6}^L=A_{1,L}\oplus A_{2,L}\oplus A_{3,L}$, where
$A_{1,L}$ is the Laplacian in the triangle
\[
\Omega_L=\Big\{
(x_1,x_2): 0<x_1< \frac{x_2}{\sqrt{3}},\, 0<x_2<L
\Big\}
\]
with the Robin boundary condition at $x_1=0$ and with the Neumann boundary condition
on the other two sides,
the operator $A_{2,L}$ in the Laplacian in the half-strip $\Pi_L=(0,L/\sqrt 3)\times(L,\infty)$
with the Robin boundary condition at $x_1=0$ and with the Neumann boundary condition
at the remaining part of the boundary, and $A_{3,L}$ is the Neumann Lalplacian
in $\Tilde U^+_{\frac\pi 6}\setminus \overline{\Omega_L\cap\Pi_L}$, see Fig.\ref{prop36}(a).
\begin{figure}
	\centering
\begin{tabular}{cc}
\begin{minipage}[c]{0.30\textwidth}
\begin{center}
\includegraphics[width=0.6\textwidth]{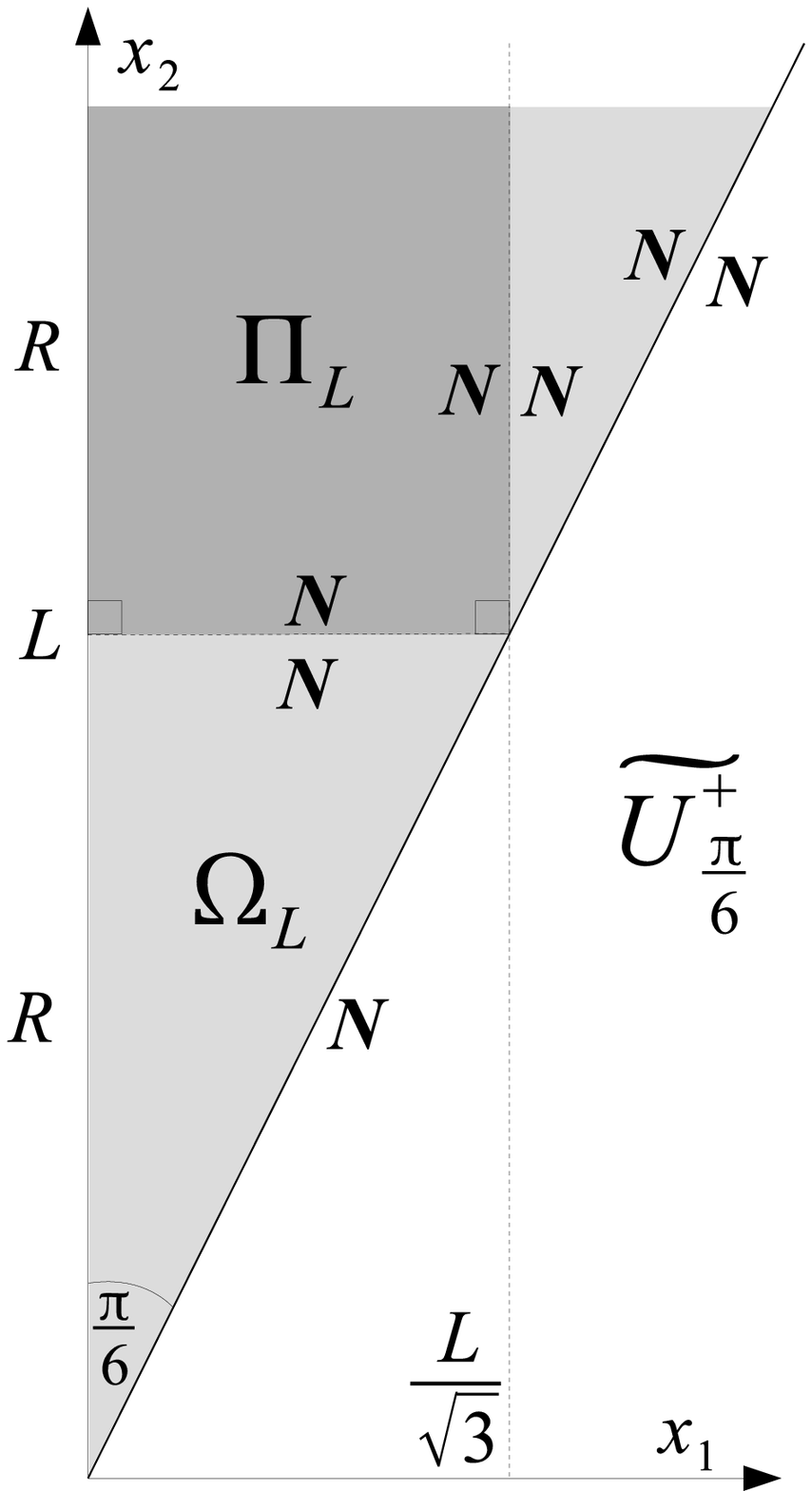}
\end{center}
\end{minipage}
&
\begin{minipage}[c]{0.30\textwidth}
\begin{center}
\includegraphics[width=0.8\textwidth]{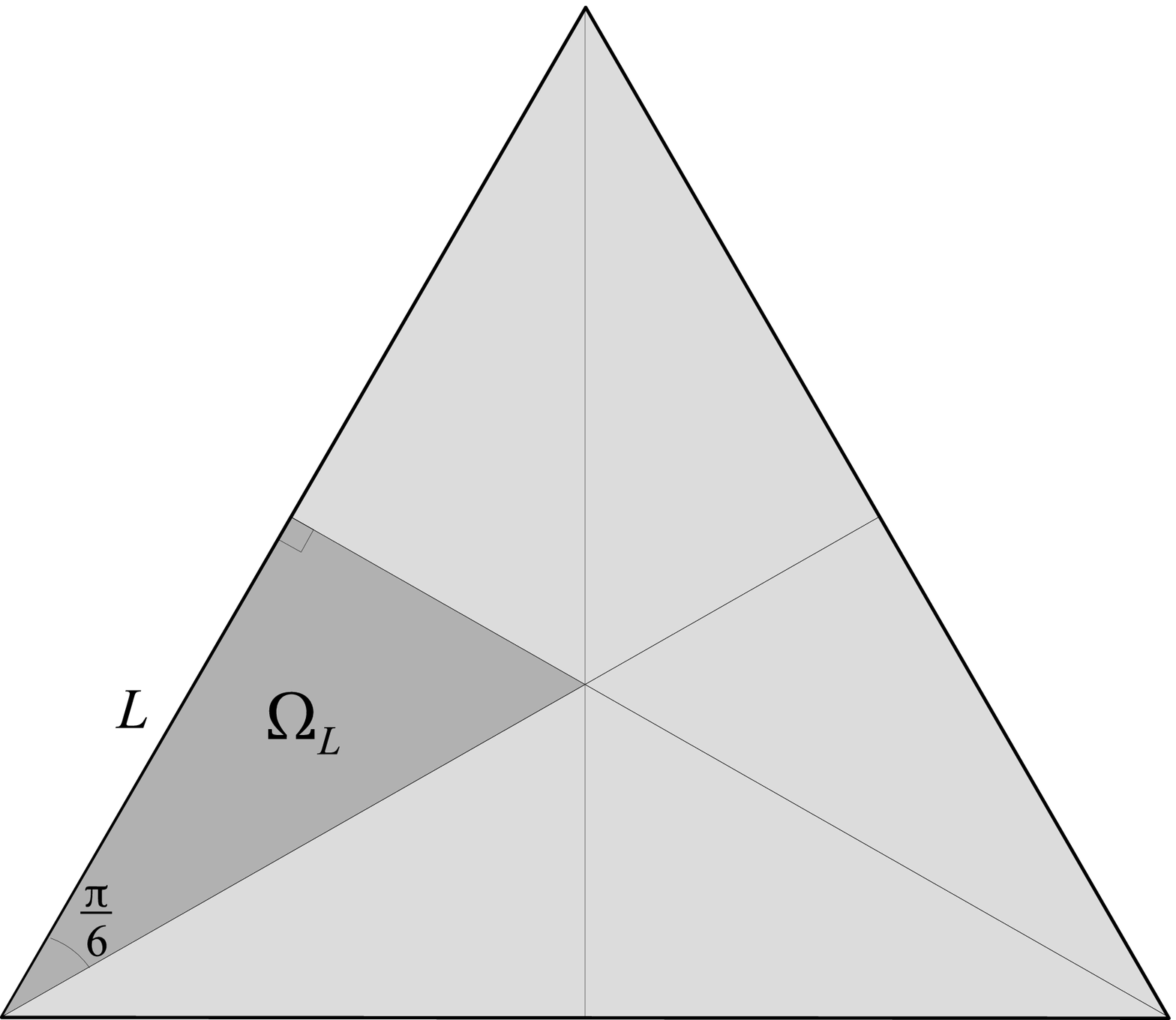}
\end{center}
\end{minipage}\\
(a) & (b)
\end{tabular}
\caption{\label{prop36} (a) Decomposition of $\tilde U^+_{\frac{\pi}{6}}$.
The symbols $R$ and $N$ indicate
respectively the Robin and Neumann boundary conditions.
(b) The triangle $2L \Theta$.}
\end{figure}

The spectrum of $A_{2,L}$ can be easily estimated using the separation of variables, and then
$A_{2,L}\ge -1+o(1)$ for $L\to+\infty$. Furthermore, $A_{3,L}\ge 0$, and Eq.~\eqref{eq-lll2}
implies
\begin{equation}
       \label{eq-lll3}
\limsup_{L\to+\infty}\Lambda_2(A_{1,L})< -1.
\end{equation}
Remark that each eigenfunction of $A_{1,L}$ can be extended, using the reflections
with respect to the Neumann sides, to an eigenfunction of the Laplacian $K_{2L}$
with the Robin boundary condition $\partial u/\partial \nu=u$ on the equilateral
triangle $2L \Theta$ composed from six copies on $\Omega_L$, see Figure~\ref{prop36}(b),
where $\Theta$ is an equilateral triangle of side length $1$.

Therefore, $\Lambda_2(A_{1,L})=\Lambda_2(K_{2L,s})$, where
$K_{2L,s}$ is the restriction of $K_{2L}$ to the functions which are invariant
under the reflections with respect to the medians
and with respect to the rotations by $\frac{2\pi}{3}$.
The eigenvalues of $K_{2L}$ in the limit $L\to+\infty$
were analyzed in \cite[Section 7]{mcc}. In particular, the first
eigenfunction
of $K_{2L}$ has the above-mentioned symmetries, hence,
$\Lambda_1(K_{2L})=\Lambda_1(K_{2L,s})$.
On the other hand, the second eigenvalue of $K_{2L}$
is double-degenerate, and no associated eigenfunction
has the required symmetries: there is just one eigenfunction, noted
$T^{0,1}_s$ in \cite[Section 7]{mcc}, which is even with respect to one of the medians,
but it does not possess the other symmetries.
Therefore, $\Lambda_2(K_{2L,s})\ge \Lambda_4(K_{2L})$ for large $L$.
On the other hand, it is shown in \cite[Subsection 7.4]{mcc}
that $\liminf_{L\to+\infty}\Lambda_j(K_{2L})\ge -1$
for $j\ge 4$, which contradicts \eqref{eq-lll3}.
This contradiction shows that the inequality
$\Lambda_2(T_{\frac \pi 6})\equiv\Lambda_2(Q_{\frac \pi 6})<-1$
is not possible, and $T_{\frac \pi 6}$ has a unique discrete eigenvalue.
\end{proof}

\section{Asymptotics of eigenvalues for small angle}\label{small1}

The present section is devoted to the study of the discrete spectrum
of $T_\alpha$ as $\alpha$ tends to $0$.

\subsection{First order asymptotics}
We are going to show first the following result giving
an estimate for the Rayleigh quotients of $T_\alpha$:
\begin{theorem}\label{Thprinc}
There exists $\alpha_0\in(0,\frac{\pi}{2})$ and  $\mathcal C>0$ such that for all $\alpha\in(0,\alpha_0)$ and for all $n\in\mathbb N$ there holds
\[
\Big\lvert\, \Lambda_n(T_\alpha)+\frac{1}{(2n-1)^2\alpha^2}\Big\rvert \leq \mathcal C.
\]
\end{theorem}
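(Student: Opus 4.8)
The plan is to carry out the Born--Oppenheimer reduction announced in the introduction: reduce $T_\alpha$ to an explicitly solvable one-dimensional radial operator and then compare Rayleigh quotients through the min-max principle. First I pass to polar coordinates $(r,\theta)$, $r>0$, $\lvert\theta\rvert<\alpha$, in which
\[
t_\alpha(u,u)=\int_0^{\infty}\int_{-\alpha}^{\alpha}\Big(\lvert\partial_r u\rvert^2+\tfrac{1}{r^2}\lvert\partial_\theta u\rvert^2\Big)\,r\,d\theta\,dr-\int_0^{\infty}\big(\lvert u(r,\alpha)\rvert^2+\lvert u(r,-\alpha)\rvert^2\big)\,dr,
\]
and then apply the unitary map $u\mapsto w:=r^{1/2}u$ onto $L^2(\RR_+\times(-\alpha,\alpha),dr\,d\theta)$. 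An integration by parts turns the radial kinetic term into $\int\lvert\partial_r w\rvert^2\,dr-\frac14\int r^{-2}\lvert w\rvert^2\,dr$, producing the critical Hardy potential $-1/(4r^2)$. At fixed $r$ the transverse contribution is $r^{-2}$ times the Robin bracket on $(-\alpha,\alpha)$ with parameter $r$ studied in subsection~\ref{interval} (with $L=\alpha$); its lowest eigenvalue is $r^{-2}E_1(\alpha,r)=-m(\alpha r)^2/(\alpha^2r^2)$, which by Proposition~\ref{Prop2.5} equals exactly $\lambda_1(r):=-\frac{1}{\alpha r}+\phi(\alpha r)$ with $\phi$ bounded. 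Writing $\Psi_r:=\widetilde\Phi_\alpha(r,\cdot)$ for the corresponding normalized transverse ground state, the effective operator is $H_\alpha=-\frac{d^2}{dr^2}-\frac{1}{4r^2}-\frac{1}{\alpha r}+\phi(\alpha r)$ on $L^2(\RR_+)$, the boundary condition at the origin being the one selecting the $r^{1/2}$ (rather than $r^{1/2}\log r$) behaviour forced by $w=r^{1/2}u$ with $u$ regular at the vertex.

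Since $H_\alpha$ differs from $H^0_\alpha:=-\frac{d^2}{dr^2}-\frac{1}{4r^2}-\frac{1}{\alpha r}$ only by the bounded term $\phi(\alpha r)$, the min-max principle gives $\lvert\Lambda_n(H_\alpha)-\Lambda_n(H^0_\alpha)\rvert\le\lVert\phi\rVert_\infty$ for every $n$. The operator $H^0_\alpha$ is an exactly solvable Whittaker (two-dimensional hydrogen) operator: with the $r^{1/2}$-extension one checks directly that $r^{1/2}e^{-r/\alpha}$ is an eigenfunction for the eigenvalue $-1/\alpha^2$, and the full spectrum is $\Lambda_n(H^0_\alpha)=-\frac{1}{(2n-1)^2\alpha^2}$, $n\in\mathbb N$, with Laguerre-type eigenfunctions. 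Hence the theorem reduces to the two-sided comparison $\lvert\Lambda_n(T_\alpha)-\Lambda_n(H_\alpha)\rvert\le\mathcal C$ with $\mathcal C$ independent of $n$ and of $\alpha\in(0,\alpha_0)$.

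For the upper bound I feed product functions $w=f(r)\Psi_r(\theta)$ into the min-max characterization of $\Lambda_n(T_\alpha)$, using an $n$-dimensional space of radial profiles $f$. Because $\Psi_r$ is real and normalized one has $\langle\Psi_r,\partial_r\Psi_r\rangle=0$, so the transformed form evaluates to the form of $H_\alpha$ plus the single error $\int\lvert f\rvert^2\,\lVert\partial_r\Psi_r\rVert_{L^2_\theta}^2\,dr$; the explicit formulas for $\partial_\gamma C_L$ and $\partial_\gamma\Phi_L$ recorded in subsection~\ref{interval}, after the rescaling $x=\alpha r$, yield the uniform bound $\lVert\partial_r\Psi_r\rVert_{L^2_\theta}^2\le C\alpha^2$. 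This error is therefore $\le C\alpha^2\lVert f\rVert^2$ and gives $\Lambda_n(T_\alpha)\le\Lambda_n(H_\alpha)+C\alpha^2$ for all $n$.

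The lower bound is the heart of the matter. For arbitrary $w$ in the form domain I use the fibre decomposition $w(r,\cdot)=f(r)\Psi_r+w^\perp(r,\cdot)$, $f(r)=\langle w(r,\cdot),\Psi_r\rangle$, $w^\perp\perp\Psi_r$ for a.e.\ $r$. The transverse form splits as $\lambda_1(r)\lvert f\rvert^2+b_r(w^\perp,w^\perp)$, and the transverse gap $r^{-2}E_2(\alpha,r)=E_2(1,\alpha r)/(\alpha r)^2$ — huge and positive for $\alpha r\lesssim 1$ and bounded below by $-1$ for large $\alpha r$ by \eqref{2.1.6} — confines the excited channel, which after the Hardy bound for $w^\perp$ contributes at least $-\lVert w^\perp\rVert^2$. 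The genuine difficulty comes from differentiating $f\Psi_r$ in $r$, which produces the diagonal error $\int\lvert f\rvert^2\lVert\partial_r\Psi_r\rVert^2$ and, more seriously, the off-diagonal cross terms $2\,\mathrm{Re}\,\langle\partial_r(f\Psi_r),\partial_r w^\perp\rangle$; using $\langle\Psi_r,\partial_r w^\perp\rangle=-\langle\partial_r\Psi_r,w^\perp\rangle$ these reduce to quantities of size $\alpha\int\lvert f'\rvert\,\lVert w^\perp\rVert$. The main obstacle is that they must be absorbed \emph{uniformly in $n$} without spoiling the exact cancellation of the critical potential $-1/(4r^2)$: a naive Young's inequality borrowing from $\int\lvert f'\rvert^2$ would leave a term $-\delta\int r^{-2}\lvert f\rvert^2$, and since both $\int\lvert f'\rvert^2$ and $\int r^{-2}\lvert f\rvert^2$ are infinite on the genuine $r^{1/2}$-type eigenfunctions while only their difference is finite, this destroys the estimate. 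This is resolved by the improved Hardy inequality of \cite{ref9}, which strengthens $\int\lvert f'\rvert^2\ge\frac14\int r^{-2}\lvert f\rvert^2$ by a strictly positive remainder; that remainder is the finite reservoir against which the cross terms are estimated, so that the reduction goes through without any preliminary localization. Combining the two channels gives $\hat t_\alpha(w,w)\ge h_\alpha(f,f)-\mathcal C\lVert w\rVert^2$, whence $\Lambda_n(T_\alpha)\ge\Lambda_n(H_\alpha)-\mathcal C$ for all $n$ by the min-max principle. Finally, for the indices with $(2n-1)\alpha\ge 1$ the claimed estimate is automatic, since there $\Lambda_n(T_\alpha)=\inf\Specess T_\alpha=-1$ by Theorem~\ref{SPEC} while $0<(2n-1)^{-2}\alpha^{-2}\le 1$.
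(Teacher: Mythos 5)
Your proposal is correct and follows essentially the same route as the paper's proof: passing to polar coordinates with the $r^{1/2}$ unitary to produce the critical $-1/(4r^2)$ potential, projecting onto the transverse Robin ground state with the uniform bound $\lVert\partial_r\Psi_r\rVert^2_{L^2_\theta}=O(\alpha^2)$ for the upper bound (the paper's Proposition~\ref{Prop4.5} and Lemma~\ref{majorK}), and, for the lower bound, the same fibre decomposition with weighted Cauchy--Schwarz absorption of the cross terms into the remainder of the improved Hardy inequality of \cite{ref9} and into the transverse gap $E_2(1,\alpha r)/(\alpha r)^2$ (the paper's Propositions~\ref{sbb1}--\ref{sbb2} and Lemmas~\ref{a1sbb}--\ref{a2sbb}), before comparing with the exactly solvable hydrogen-type operator with spectrum $-1/((2n-1)^2\alpha^2)$. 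The only blemish is your closing remark: the claim $\Lambda_n(T_\alpha)=-1$ whenever $(2n-1)\alpha\ge 1$ is not justified as stated (it would require a priori control of the number of discrete eigenvalues), but it is also superfluous, since your two-sided comparison, like the paper's Propositions~\ref{upperbound} and~\ref{lowerbound}, is already uniform in $n\in\mathbb N$.
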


Before passing to the proof let us discuss the most important consequences.
Recall that $N(T_\alpha,-1)$ is the number of discrete eigenvalues of $T_\alpha$,
and it is finite in virtue of Theorem~\ref{thm31}.

\begin{Corollaires}\label{cor42}
There exists $\kappa>0$ such that $N(T_\alpha,-1)\geq \kappa/\alpha$
as $\alpha$ is small. In particular, $N(T_\alpha,-1)$ tends to $+\infty$ as $\alpha$ tends to $0$.
\end{Corollaires}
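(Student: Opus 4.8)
The plan is to read off the conclusion directly from the lower bound on the Rayleigh quotients furnished by Theorem~\ref{Thprinc}, using only the min-max characterization of the discrete spectrum. Recall that by Theorem~\ref{SPEC}(a) one has $\inf\Specess T_\alpha=-1$, so that $\Sigma=-1$ in the notation of Subsection~\ref{sminmax}; consequently, whenever $\Lambda_n(T_\alpha)<-1$ the min-max principle forces $E_n(T_\alpha)=\Lambda_n(T_\alpha)$ to be a genuine discrete eigenvalue. Therefore $N(T_\alpha,-1)$ is at least the number of indices $n$ for which $\Lambda_n(T_\alpha)<-1$, and since $n\mapsto\Lambda_n(T_\alpha)$ is non-decreasing it suffices to exhibit a single large index and to count all indices below it.

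First I would turn the two-sided estimate of Theorem~\ref{Thprinc} into the one-sided bound
\[
\Lambda_n(T_\alpha)\le -\frac{1}{(2n-1)^2\alpha^2}+\mathcal C,\qquad \alpha\in(0,\alpha_0),\ n\in\mathbb N.
\]
The condition $\Lambda_n(T_\alpha)<-1$ is then guaranteed as soon as $(2n-1)^2\alpha^2(\mathcal C+1)<1$, that is,
\[
2n-1<\frac{1}{\alpha\sqrt{\mathcal C+1}}.
\]
Hence every positive integer $n$ with $2n-1<\bigl(\alpha\sqrt{\mathcal C+1}\bigr)^{-1}$ yields a discrete eigenvalue of $T_\alpha$ strictly below $-1$.

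The number of such integers is at least $\tfrac12\bigl((\alpha\sqrt{\mathcal C+1})^{-1}-1\bigr)$, which for $\alpha$ small enough is bounded below by $\tfrac14(\alpha\sqrt{\mathcal C+1})^{-1}$. Setting $\kappa=\bigl(4\sqrt{\mathcal C+1}\bigr)^{-1}$ I would conclude that $N(T_\alpha,-1)\ge\kappa/\alpha$ for all sufficiently small $\alpha$, and in particular $N(T_\alpha,-1)\to+\infty$ as $\alpha\to0$.

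I expect no real obstacle here: all the analytic work is contained in Theorem~\ref{Thprinc}, and the present statement is a purely bookkeeping argument converting the asymptotic lower bound on each Rayleigh quotient into a count of eigenvalues lying strictly below the bottom of the essential spectrum. The only point requiring a word of care is the passage from ``$\Lambda_n(T_\alpha)<-1$'' to ``$E_n(T_\alpha)$ is a discrete eigenvalue'', which is exactly the first alternative in the min-max dichotomy and is legitimate precisely because $\Sigma=-1$.
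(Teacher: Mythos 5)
Your proposal is correct and follows essentially the same route as the paper: both turn the upper bound $\Lambda_n(T_\alpha)\le \mathcal C-\frac{1}{(2n-1)^2\alpha^2}$ of Theorem~\ref{Thprinc} into the condition $2n-1<\bigl(\alpha\sqrt{1+\mathcal C}\bigr)^{-1}$, invoke the min-max dichotomy (with $\Sigma=-1$ from Theorem~\ref{SPEC}) to identify each such $\Lambda_n(T_\alpha)$ as a discrete eigenvalue, and count the admissible indices. The only difference is bookkeeping: you make the constant $\kappa=\bigl(4\sqrt{\mathcal C+1}\bigr)^{-1}$ explicit, while the paper states the count as the integer part of $n_\alpha-1$ with $n_\alpha=\frac12\bigl((\alpha\sqrt{1+\mathcal C})^{-1}+1\bigr)$.
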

\begin{proof}
For all $\alpha\in(0,\alpha_0)$ and any $n\in\mathbb N$ one has 
\[
\Lambda_n(T_\alpha)\leq \mathcal C - \frac{1}{(2n-1)^2\alpha^2}.
\]
By the min-max principle, the number $\Lambda_n(T_\alpha)$ is the $n$th eigenvalue
iff it is strictly less than $(-1)$.
Notice that the right-hand side is smaller than $(-1)$ for all $n<n_\alpha$,
\[
n_\alpha:=\displaystyle\frac{1}{2} \left(\displaystyle\frac{1}{\alpha\sqrt{1+\mathcal C}}+1\right),
\]
 and then $N(T_\alpha,-1)$ is not smaller than the integer part
of $n_\alpha-1$.
\end{proof}

Another obvious corollary is
\begin{Corollaires}\label{corol43}
For any $n\in\mathbb N$ there holds
\[
E_n(T_\alpha)=-\frac{1}{(2n-1)^2 \alpha^2} +O(1) \text{ as $\alpha$ tends to $0$.}
\]
\end{Corollaires}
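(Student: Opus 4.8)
The plan is to deduce the statement directly from Theorem~\ref{Thprinc} together with the min-max principle; the only genuine point is to convert the uniform bound on the Rayleigh quotients $\Lambda_n(T_\alpha)$ into a statement about the honest eigenvalues $E_n(T_\alpha)$.

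First I would fix $n\in\mathbb N$ and recall from Theorem~\ref{SPEC}(a) that $\Specess T_\alpha=[-1,+\infty)$, so in the notation of the min-max principle one has $\Sigma=-1$. Theorem~\ref{Thprinc} gives, for all $\alpha\in(0,\alpha_0)$, the upper bound
\[
\Lambda_n(T_\alpha)\le \mathcal C-\frac{1}{(2n-1)^2\alpha^2}.
\]
The right-hand side tends to $-\infty$ as $\alpha\to 0$; in particular it is strictly smaller than $-1$ as soon as $\frac{1}{(2n-1)^2\alpha^2}>\mathcal C+1$, i.e.\ for all $\alpha<\alpha_n:=\min\big\{\alpha_0,\,[(2n-1)\sqrt{\mathcal C+1}]^{-1}\big\}$.

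For such $\alpha$ one has $\Lambda_n(T_\alpha)<-1=\Sigma$, so alternative (1) of the min-max principle applies and yields $E_n(T_\alpha)=\Lambda_n(T_\alpha)$; in particular $T_\alpha$ has at least $n$ discrete eigenvalues. It then suffices to substitute this identity into the two-sided bound of Theorem~\ref{Thprinc}, which becomes
\[
\Big\lvert\,E_n(T_\alpha)+\frac{1}{(2n-1)^2\alpha^2}\Big\rvert\le\mathcal C,\qquad \alpha<\alpha_n,
\]
and this is exactly the asserted expansion $E_n(T_\alpha)=-\frac{1}{(2n-1)^2\alpha^2}+O(1)$ as $\alpha\to0$.

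There is essentially no obstacle here: the whole analytic content has been placed in Theorem~\ref{Thprinc}, and the only thing to watch is the bookkeeping that, for each \emph{fixed} $n$, the threshold $\alpha_n$ below which $\Lambda_n$ is a genuine eigenvalue depends on $n$ — which is harmless since $n$ is held fixed while $\alpha\to0$. This is the same mechanism already exploited in the proof of Corollary~\ref{cor42}, where the count $N(T_\alpha,-1)$ was controlled by counting the indices $n$ for which $\Lambda_n(T_\alpha)<-1$.
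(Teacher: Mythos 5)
Your proposal is correct and is precisely the argument the paper intends: it labels Corollary~\ref{corol43} an ``obvious corollary'' of Theorem~\ref{Thprinc}, the point being exactly the one you spell out, namely that for each fixed $n$ the bound $\Lambda_n(T_\alpha)\le\mathcal C-\frac{1}{(2n-1)^2\alpha^2}$ forces $\Lambda_n(T_\alpha)<-1=\inf\Specess T_\alpha$ for small $\alpha$, so the min-max principle identifies $\Lambda_n(T_\alpha)$ with the discrete eigenvalue $E_n(T_\alpha)$ and the two-sided estimate of Theorem~\ref{Thprinc} transfers to $E_n(T_\alpha)$. Your explicit threshold $\alpha_n$ and the remark that its $n$-dependence is harmless for fixed $n$ are correct bookkeeping, matching the mechanism already used in Corollary~\ref{cor42}.
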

In Theorem~\ref{full} below we show a stronger result
that $E_n(T_\alpha)$ admits a full asymptotic expansions
in powers of $\alpha^2$.

\subsubsection{Model one-dimensional operator}\label{op1D}

The main idea of the proof is to compare the operator
$T_\alpha$ with some one-dimensional operator.
Namely, for $a>0$ consider the following operator acting in $L^2(\mathbb R_+)$:
\[
(H_a v)(r)=\Big(-\frac{d^2}{dr^2} -\frac{1}{4r^2}-\frac{1}{a r} \Big) v(r),
\quad v\in C^\infty_c(\mathbb R_+)
\]
and $h_a$ be the associated sesquilinear form,
\[
h_a(v,v)=\langle v, H_a v\rangle_{L^2(\mathbb R_+)}
=\int_0^\infty \Big(|v'|^2-\dfrac{|v|^2}{4r^2}
-\dfrac{|v|^2}{ar} \Big)dr,
\quad v\in C^\infty_c(\mathbb R_+).
\]
Denote by $H^\infty_a$ the Friedrichs extension of $H_a$.
It is known (see Appendix~\ref{appb}) that its essential spectrum
equals $[0,+\infty)$
and that the discrete spectrum consists of the simple negative eigenvalues
\[
\mathcal E_n(a):=E_n(H^\infty_a)=-\dfrac{1}{(2n-1)^2 a^2}, \quad n\in \mathbb N,
\]
while the respective eigenfunctions $\psi_n$ are
\[
\psi_n(r)=\sqrt{r} e^{-\frac{r}{(2n-1)a}} L_{n-1}\Big(\dfrac{2r}{(2n-1)a}\Big)
\]
with $L_m$ being the Laguerre polynomials.

\subsubsection{Polar coordinates}\label{polarcoord}

Denote
$V_\alpha=(0,+\infty)\times(-\alpha,\alpha)$
and
define a unitary operator
$\mathcal U \colon L^2(U_\alpha,dx_1dx_2)\to L^2(V_\alpha,drd\theta)$,
\[
{\mathcal U }u(r,\theta)= r^{\frac{1}{2}}u(r\cos\theta,r\sin\theta).
\]

In order to use the min-max principle for the Robin Laplacians
we need the following density result, which is quite
standard (see  Appendix~\ref{appa} for the proof):
\begin{lemma}\label{density}
The set
\[
\mathcal F =\Big\{u\in C^\infty(\overline{U_\alpha}): \text{ there exist } R_1,R_2 >0 \text{ such that } u=0 \text{ for } \lvert x\rvert < R_1\text{ and for } \lvert x\rvert >R_2\Big\}
\]
is dense in $H^1(U_\alpha)$.
\end{lemma}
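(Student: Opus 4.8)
The statement to prove is Lemma~\ref{density}: the set $\mathcal F$ of smooth functions on $\overline{U_\alpha}$ that vanish both near the origin and near infinity is dense in $H^1(U_\alpha)$. The plan is to reach a general $u\in H^1(U_\alpha)$ in three successive approximation stages, at each stage reducing to a class of functions that is easier to handle, and only at the very end invoking smooth approximation.

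\emph{Step 1: truncation at infinity.} First I would cut off $u$ far from the origin. Fix a smooth radial cutoff $\eta_R(x)=\eta(|x|/R)$ with $\eta\equiv 1$ on $[0,1]$ and $\eta\equiv 0$ on $[2,\infty)$, so that $|\nabla\eta_R|\le C/R$. Then $\eta_R u\to u$ in $L^2(U_\alpha)$ by dominated convergence, and $\nabla(\eta_R u)=\eta_R\nabla u+u\nabla\eta_R$; the first term converges to $\nabla u$ in $L^2$, while $\|u\nabla\eta_R\|_{L^2}\le (C/R)\|u\|_{L^2}\to 0$. Hence $\eta_R u\to u$ in $H^1(U_\alpha)$, and we may assume $u$ has compact support in $\overline{U_\alpha}$.

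\emph{Step 2: excising the vertex.} The more delicate cut is near the origin, since the vertex is the singular point of the boundary. Here I would use that in two dimensions a point has zero $H^1$-capacity. Choose a logarithmic cutoff $\zeta_\varepsilon$ equal to $0$ on $|x|<\varepsilon$, equal to $1$ on $|x|>\sqrt\varepsilon$, and interpolating via $\zeta_\varepsilon(x)=\log(|x|/\varepsilon)/\log(1/\sqrt\varepsilon)$ in the annulus; then $\int_{U_\alpha}|\nabla\zeta_\varepsilon|^2\,dx\le C/\log(1/\varepsilon)\to 0$ as $\varepsilon\to 0$. With $u$ already compactly supported and in $L^2$, one checks $\zeta_\varepsilon u\to u$ in $L^2$ and $\|u\nabla\zeta_\varepsilon\|_{L^2}\to 0$ by Cauchy--Schwarz together with the capacity estimate (one must be slightly careful and first note $u$ is bounded in a suitable averaged sense, or use the two-dimensional Hardy-type bound); combined with Step~1 we reduce to $u\in H^1(U_\alpha)$ supported in an annulus $R_1<|x|<R_2$, away from the vertex.

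\emph{Step 3: mollification.} On such an annular region the boundary $\partial U_\alpha$ consists of two straight segments, so $\overline{U_\alpha}\cap\{R_1<|x|<R_2\}$ is (a finite union of pieces of) a Lipschitz domain with, in fact, smooth straight boundary away from the corner. Standard results on smooth approximation up to the boundary for Lipschitz domains (extension to $\RR^2$ followed by mollification, or translation into the domain and mollification) give a sequence in $C^\infty(\overline{U_\alpha})$, still supported in the annulus, converging in $H^1$. These approximants lie in $\mathcal F$ by construction, which finishes the proof.

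\emph{Main obstacle.} The genuinely nontrivial point is Step~2: removing the vertex without paying an $H^1$-cost. The naive power-law cutoff has $\int|\nabla\zeta_\varepsilon|^2$ bounded below, so it fails; the logarithmic cutoff is what exploits the borderline two-dimensional capacity of a point and makes the gradient term vanish. Controlling the cross term $\int u\,\nabla\zeta_\varepsilon\cdot\overline{\nabla u}$ (or $\|u\nabla\zeta_\varepsilon\|_{L^2}$) rigorously is where the care is needed, and I expect that to be the heart of the argument; Steps~1 and~3 are routine.
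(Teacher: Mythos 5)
Your central idea in Step 2 --- excising the vertex with a logarithmic cutoff, exploiting that a point has zero $H^1$-capacity in two dimensions --- is exactly the mechanism of the paper's proof, and your Steps 1 and 3 are indeed routine. But the delicate point you yourself flag is, as written, a genuine gap rather than a footnote: for a general $u\in H^1(U_\alpha)$ the claim $\|u\,\nabla\zeta_\varepsilon\|_{L^2}\to 0$ does not follow from Cauchy--Schwarz plus the capacity estimate, because that argument needs $\|u\|_\infty$, and $H^1\not\hookrightarrow L^\infty$ in dimension two. Nor can H\"older rescue it: in polar coordinates $\|\nabla\zeta_\varepsilon\|_{L^q}^q \sim |\ln\varepsilon|^{-q}\int_\varepsilon^{\sqrt{\varepsilon}} r^{1-q}\,dr$, which tends to $0$ only for $q\le 2$ and blows up like $\varepsilon^{2-q}|\ln\varepsilon|^{-q}$ for $q>2$; pairing $|u|^2$ against $|\nabla\zeta_\varepsilon|^2$ with $u\in L^p$, $p<\infty$, forces an $L^q$ norm of $\nabla\zeta_\varepsilon$ with $q>2$, so the exponents never close. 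The repairs you gesture at do exist --- a logarithmic Hardy (Leray-type) inequality $\int_{U_\alpha\cap B_{1/2}}|u|^2\,|x|^{-2}(\ln|x|)^{-2}dx\le C\|u\|^2_{H^1}$ would suffice, as would first replacing $u$ by its amplitude truncation $\max(-M,\min(u,M))$, which is dense in $H^1$ --- but each is a lemma needing its own proof, and your proposal supplies neither.

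The paper avoids the issue entirely by reordering your steps: it first invokes the standard density of restrictions of $C^\infty_c(\mathbb{R}^2)$ in $H^1(U_\alpha)$ --- which accomplishes your Steps 1 and 3 in one stroke and, crucially, produces a \emph{bounded} function --- and only then applies a single log-scale cutoff $\chi_\varepsilon(x)=\psi\big(\big|\ln|x|/\ln\varepsilon\big|\big)$ vanishing for $|x|\le\varepsilon$ and $|x|\ge 1/\varepsilon$. For bounded smooth $v$ the dangerous term becomes a one-line computation, $\int_{U_\alpha} |v|^2|\nabla\chi_\varepsilon|^2\,dx \le \alpha\,\|v\|_\infty^2\|\psi'\|_\infty^2/|\ln\varepsilon|\to 0$. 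The moral is to mollify first and cut last: if you keep your ordering, insert the amplitude truncation before Step 2; if you adopt the paper's ordering, your outline becomes precisely the paper's proof.
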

We have then
\[
\mathcal U(\mathcal F)=\mathcal G:=\big\{v\in C^\infty (\overline {V_\alpha}): \text{ } \exists R_1,R_2>0
\text{ such that } u(r,\theta)=0 \text{ for } r < R_1 \text{ and for } r >R_2\big\}.
\]
We define the new sesquilinear form $q_\alpha(v,v)\coloneqq t_\alpha(\mathcal U^*v,\mathcal U^*v)$,
\[
q_\alpha(v,v)= \int_{V_\alpha}\left ( \lvert v_r\rvert^2 -\frac{1}{4} \frac{\lvert v \rvert ^2}{r^2} +\frac{\lvert v_\theta\rvert^2}{r^2}\right) drd\theta-\int_{\mathbb R_+} \left (\frac{\lvert v(r,\alpha)\rvert^2}{r}+\frac{\lvert v(r,-\alpha)\rvert^2}{r} \right) dr, \text{ } v\in\mathcal G,
\]
defined initially on $\mathcal G$. As the set $\mathcal F$
is dense in the form domain of $T_\alpha$,
the operator $Q_\alpha$ corresponding to the closure of $q_\alpha$
writes as $Q_\alpha=\mathcal U T_\alpha \mathcal U^*$, and
$\Lambda_n(T_\alpha)=\Lambda_n(Q_\alpha)$ for all $n\in\mathbb N$.

Recall that the numbers $E_j(a,b)$ are defined in subsection~\ref{interval}.
The next proposition is a direct application of the min-max principle.
\begin{proposition}
For all $v \in \mathcal G$ we have
\[q_\alpha(v,v) \geq \int_{V_\alpha}
\Big(
 \lvert v_r\rvert^2 -\frac{1}{4}\frac{\lvert v \rvert^2}{r^2} +\frac{E_1(1,r\alpha)}{(r\alpha)^2} \lvert v \rvert^2
\Big) drd\theta.\]
\end{proposition}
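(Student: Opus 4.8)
The plan is to localise the form in the radial variable and recognise its angular part as the quadratic form of the interval operators $B_{L,\gamma}$ introduced in subsection~\ref{interval}, applied slice by slice. First I would use Fubini's theorem to write, for $v\in\mathcal G$,
\[
q_\alpha(v,v)=\int_0^{+\infty}\Bigg[\int_{-\alpha}^{\alpha}\Big(|v_r|^2-\frac{|v|^2}{4r^2}\Big)\,d\theta+A(r)\Bigg]dr,
\]
where for each fixed $r>0$ the angular contribution is
\[
A(r)=\frac{1}{r^2}\int_{-\alpha}^{\alpha}\big|v_\theta(r,\theta)\big|^2\,d\theta-\frac{1}{r}\Big(\big|v(r,\alpha)\big|^2+\big|v(r,-\alpha)\big|^2\Big).
\]
Since $v\in\mathcal G$ is smooth on $\overline{V_\alpha}$ and compactly supported in $r$, the slice $\theta\mapsto v(r,\theta)$ belongs to $H^1(-\alpha,\alpha)$ for every $r>0$, with well-defined boundary values at $\theta=\pm\alpha$, so each $A(r)$ is meaningful.

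The key observation is that $r^2A(r)$ is exactly the sesquilinear form $b_{\alpha,r}\big(v(r,\cdot),v(r,\cdot)\big)$ associated with the interval operator $B_{\alpha,r}$ on $(-\alpha,\alpha)$, i.e.\ with length $L=\alpha$ and Robin parameter $\gamma=r$. The operator $B_{\alpha,r}$ has purely discrete spectrum with lowest eigenvalue $E_1(\alpha,r)$, so the min-max principle yields the Rayleigh-quotient bound
\[
b_{\alpha,r}(w,w)\ge E_1(\alpha,r)\int_{-\alpha}^{\alpha}|w|^2\,d\theta,\qquad w\in H^1(-\alpha,\alpha).
\]
Applying this with $w=v(r,\cdot)$ and then invoking the scaling identity $E_1(\alpha,r)=\alpha^{-2}E_1(1,r\alpha)$ from \eqref{2.1.2} gives the pointwise (in $r$) estimate
\[
A(r)\ge\frac{E_1(\alpha,r)}{r^2}\int_{-\alpha}^{\alpha}|v|^2\,d\theta=\frac{E_1(1,r\alpha)}{(r\alpha)^2}\int_{-\alpha}^{\alpha}|v|^2\,d\theta.
\]

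Finally I would substitute this lower bound for $A(r)$ back under the radial integral and recombine with the remaining radial terms to recover precisely the claimed inequality. I expect no genuine obstacle here: the argument is a direct slice-wise application of min-max followed by the scaling relation \eqref{2.1.2}. The only point deserving a word of care is the legitimacy of integrating the pointwise inequality over $r\in(0,+\infty)$, but this is immediate from the smoothness and compact radial support of $v\in\mathcal G$, which make all integrands continuous with compact support in $r$ and hence integrable.
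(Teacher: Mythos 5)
Your proposal is correct and follows essentially the same route as the paper: the same Fubini decomposition into radial and angular parts, the same identification of the angular slice form with $b_{\alpha,r}$ for the interval operator $B_{\alpha,r}$, the same slice-wise min-max bound by $E_1(\alpha,r)$, and the same final appeal to the scaling identity \eqref{2.1.2}. Your extra remark on the $H^1$ regularity of the slices and the integrability over $r$ is a harmless elaboration of what the paper leaves implicit.
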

\begin{proof}
Let $v \in \mathcal G$, then
\[q_\alpha(v,v)=\int_{V_\alpha} \lvert v_r\rvert^2 -\frac{1}{4} \frac{\lvert v \rvert^2}{r^2} drd\theta+ \int_{\mathbb R^+} \frac{1}{r^2} \left( \int_{-\alpha}^\alpha \lvert v_\theta\rvert^2 d\theta - r \lvert v(r,\alpha)\rvert^2-r\lvert v(r,-\alpha)\rvert^2 \right) dr.\]
We recognize in the bracket the sesquilinear form associated to the Robin Laplacian $B_{\alpha,r}$ defined in subsection \ref{interval}. As $v\in\mathcal G$, we have $\theta\mapsto v(\cdot,\theta) \in C^\infty(-\alpha,\alpha) \subset D(b_{\alpha,r})$. Then we can apply the min-max principle to obtain
\[q_\alpha(v,v) \geq \int_{V_\alpha}
\Big(\lvert v_r\rvert^2 -\frac{1}{4} \frac{\lvert v\rvert^2}{r^2} \Big)drd\theta+ \int_{\mathbb R^+}\left( \frac{E_1(\alpha,r)}{r^2} \int_{-\alpha}^\alpha \lvert v\rvert^2 d\theta \right ) dr.\]
The conclusion is due to the equality \eqref{2.1.2}.
\end{proof}

\subsubsection{Upper bound of Theorem \ref{Thprinc}.}\label{upbound}
The operator $B_{\alpha,r}$ defined in subsection \ref{interval}
will play a special role.
Recall that  $E_1(\alpha,r)<0$, and the associated normalized eigenfunction is
\[
 \widetilde \Phi_{\alpha}(r,\theta)=C_\alpha(r) \Phi_{\alpha}(r,\theta)=C_\alpha(r)\cosh\Big(\frac{m(r\alpha)\theta}{\alpha}\Big) \text{ for } \theta \in (-\alpha,\alpha).
\]
In this section, we will omit the lower indice and denote
\[
\Phi(r,\theta)\coloneqq\Phi_{\alpha}(r,\theta),\quad C(r)\coloneqq C_\alpha(r) \text{ and } \widetilde \Phi(r,\theta)\coloneqq C(r)\Phi(r,\theta).
\]

Define two orthogonal projections $\Pi$ and $P$ in $ L^2(V_\alpha)$ by
\begin{equation}
\label{4.2.3.1} 
\begin{aligned}
\Pi v(r,\theta)&:=f(r)\tilde \Phi(r,\theta), \quad f(r)\coloneqq \int_{-\alpha}^\alpha v(r,\theta) \tilde \Phi(r,\theta)d\theta, \\
Pv(r,\theta)&:=v(r,\theta)-\Pi v(r,\theta). 
\end{aligned}
\end{equation}
During the proof, the functions $v$ and $f$ will always be related by \eqref{4.2.3.1}.

\begin{proposition}{\label{Prop4.5}}
For all $v \in \mathcal G$ we have $\Pi v \in \mathcal G$, and
\[
q_\alpha(\Pi v,\Pi v)=\int_{\mathbb R^+} \lvert f'(r) \rvert^2 + \left ( K_\alpha(r) -\frac{1}{4r^2} + \frac{E_1(1,\alpha r)}{(\alpha r)^2} \right ) \lvert f(r) \rvert^2 dr,
\]
where $K_\alpha(r):=\displaystyle\int_{-\alpha}^\alpha\big\lvert\partial_r\tilde \Phi(r,\theta)\big\rvert^2 d\theta$.
\end{proposition}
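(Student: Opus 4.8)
The statement is established by a direct computation, fibered over the radial variable $r$, exploiting that for each fixed $r$ the function $\theta\mapsto\tilde\Phi(r,\theta)$ is the \emph{normalized} first eigenfunction of the interval operator $B_{\alpha,r}$ from subsection~\ref{interval}. First I would check $\Pi v\in\mathcal G$, so that $q_\alpha(\Pi v,\Pi v)$ is even defined. Since $v\in\mathcal G$ vanishes for $r<R_1$ and $r>R_2$, the radial coefficient $f(r)=\int_{-\alpha}^\alpha v(r,\theta)\tilde\Phi(r,\theta)\,d\theta$ has support in $[R_1,R_2]\subset(0,\infty)$; and because $m$ is real-analytic and $C_\alpha$ is smooth on $(0,\infty)$ (subsection~\ref{interval}), the map $(r,\theta)\mapsto\tilde\Phi(r,\theta)=C_\alpha(r)\cosh\!\big(m(r\alpha)\theta/\alpha\big)$ is smooth on $[R_1,R_2]\times[-\alpha,\alpha]$, the support keeping us uniformly away from the singularity at $r=0$. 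Hence $\Pi v=f\,\tilde\Phi\in C^\infty(\overline{V_\alpha})$ with support in $r\in[R_1,R_2]$, i.e. $\Pi v\in\mathcal G$.

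Next I would split the integrand of $q_\alpha(\Pi v,\Pi v)$ into an \emph{angular block} and the remaining radial terms. Inserting $\Pi v=f(r)\tilde\Phi(r,\theta)$ and collecting the purely angular contributions, the angular block, as a function of $r$ to be integrated over $\mathbb R_+$, is
\[
\frac{|f(r)|^2}{r^2}\Big(\int_{-\alpha}^\alpha|\partial_\theta\tilde\Phi(r,\theta)|^2\,d\theta-r\,|\tilde\Phi(r,\alpha)|^2-r\,|\tilde\Phi(r,-\alpha)|^2\Big).
\]
The bracket is exactly the form $b_{\alpha,r}\big(\tilde\Phi(r,\cdot),\tilde\Phi(r,\cdot)\big)$, which equals $E_1(\alpha,r)$ because $\tilde\Phi(r,\cdot)$ is the normalized first eigenfunction of $B_{\alpha,r}$. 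Using the scaling $E_1(\alpha,r)=\alpha^{-2}E_1(1,\alpha r)$ from \eqref{2.1.2}, this block becomes $\dfrac{E_1(1,\alpha r)}{(\alpha r)^2}\,|f(r)|^2$, producing the last term of the asserted identity.

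For the radial terms I would compute $\partial_r(\Pi v)=f'\tilde\Phi+f\,\partial_r\tilde\Phi$, whence
\[
\int_{-\alpha}^\alpha|\partial_r(\Pi v)|^2\,d\theta=|f'|^2\!\int_{-\alpha}^\alpha\tilde\Phi^2\,d\theta+2\Re(f'\bar f)\!\int_{-\alpha}^\alpha\tilde\Phi\,\partial_r\tilde\Phi\,d\theta+|f|^2\!\int_{-\alpha}^\alpha(\partial_r\tilde\Phi)^2\,d\theta.
\]
Here the first integral is $1$ and the third is $K_\alpha(r)$ by definition; the one real point of the proof is that the cross term vanishes, which follows by differentiating the normalization $\int_{-\alpha}^\alpha\tilde\Phi(r,\theta)^2\,d\theta\equiv1$ in $r$, giving $\int_{-\alpha}^\alpha\tilde\Phi\,\partial_r\tilde\Phi\,d\theta=0$. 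The term $-\tfrac14\int_{-\alpha}^\alpha|\Pi v|^2/r^2\,d\theta=-|f|^2/(4r^2)$ is again immediate by normalization. Adding the three blocks and integrating over $\mathbb R_+$ yields precisely the claimed expression for $q_\alpha(\Pi v,\Pi v)$.

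The only points demanding care are the differentiation under the integral sign used in the cross-term identity and the smoothness of $\tilde\Phi$ invoked both for $\Pi v\in\mathcal G$ and for that identity; both are guaranteed by the regularity of $C_\alpha$ and $m$ from subsection~\ref{interval} and by the fact that $\supp f\subset[R_1,R_2]$ stays away from $r=0$. Beyond this bookkeeping there is no substantial obstacle: the proposition is essentially the statement that, along the ground-state fiber $\tilde\Phi(r,\cdot)$, the angular part of $q_\alpha$ collapses to the scalar eigenvalue $E_1(\alpha,r)$ while the radial part decouples thanks to the fiberwise normalization.
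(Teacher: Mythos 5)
Your proof is correct and follows essentially the same route as the paper: expand $\partial_r(\Pi v)=f'\tilde\Phi+f\,\partial_r\tilde\Phi$, kill the cross term by differentiating the fiberwise normalization $\int_{-\alpha}^\alpha|\tilde\Phi(r,\theta)|^2\,d\theta=1$, identify the angular block with $b_{\alpha,r}\big(\tilde\Phi(r,\cdot),\tilde\Phi(r,\cdot)\big)=E_1(\alpha,r)$, and rescale via \eqref{2.1.2}. Your added remarks on the smoothness of $\tilde\Phi$ and the support of $f$ staying away from $r=0$ only make explicit what the paper states briefly.
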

\begin{proof}
Let $v \in \mathcal G$, then $f\in C^\infty_c(\mathbb R_+)$. As $\tilde \Phi$ is smooth, one has $\Pi v= f\tilde \Phi \in \mathcal G$. Moreover,
\begin{align}
\partial_r\Pi v(r,\theta)&=f'(r)\tilde \Phi(r,\theta)+f(r) \partial_r \tilde \Phi(r,\theta),
\label{4.2.3.4}\\
\partial_\theta \Pi v(r,\theta)&=f(r) \partial_\theta \tilde \Phi(r,\theta). \nonumber
\end{align}
The evaluation of $q_\alpha$ on $\Pi v$ gives
\begin{multline*}
q_\alpha(\Pi v,\Pi v)=\int_{V_\alpha}\left( \lvert \partial_r \Pi v (r,\theta) \vert^2 +\frac{\lvert \partial_\theta \Pi v(r,\theta) \rvert ^2}{r^2}-\frac{\lvert \Pi v(r,\theta)\rvert^2}{4 r^2} \right ) d\theta dr \\
-\int_{\mathbb R_+} \left(\frac{\lvert \Pi v (r,\alpha)\rvert^2}{r}+\frac{\lvert \Pi v (r,-\alpha)\rvert^2}{r}\right ) dr.
\end{multline*}
By \eqref{4.2.3.4}, 
\[
\int_{V_\alpha}\lvert \partial _r \Pi v(r,\theta)\rvert^2d\theta dr= \int_{V_\alpha}\Big( \lvert f'(r) \tilde \Phi(r,\theta)\rvert^2 + \lvert f(r)\partial_r \tilde \Phi(r,\theta)\rvert^2 + 2 f'(r) f(r) \tilde \Phi(r,\theta)\partial_r\tilde \Phi(r,\theta) \Big)d\theta dr.
\]
For all $r\in\mathbb R_+$, $\displaystyle \int_{-\alpha}^\alpha \lvert \tilde \Phi(r,\theta)\rvert^2 d\theta=1$, then 
\[
\partial_r\left(\int_{-\alpha}^\alpha\lvert\tilde \Phi(r,\theta)\rvert^2d\theta\right)=2\int_{-\alpha}^\alpha \tilde \Phi(r,\theta) \partial_r \tilde \Phi(r,\theta) d\theta=0.
\]
Applying Fubini's theorem we obtain
\begin{multline*}
q_\alpha(\Pi v,\Pi v)=\int_{\mathbb R_+} \left (\lvert f'(r)\rvert^2+K_\alpha(r) \lvert f(r)\rvert^2-\frac{\lvert f(r)\rvert^2}{4r^2} \right) dr 
\\+ \int_{\mathbb R_+} \frac{\lvert f(r)\rvert ^2}{r^2}\left \{ \int_{-\alpha}^\alpha\lvert\partial_\theta \tilde \Phi(r,\theta)\rvert^2 d\theta- r\left (\lvert\tilde \Phi(r,\alpha)\rvert^2 +\lvert \tilde \Phi(r,-\alpha)\rvert^2 \right) \right\} dr,
\end{multline*}
and the expression in the curly brackets equals $E_1(\alpha,r)$ due to the choice of $\tilde \Phi$. Then,
\[
q_\alpha(\Pi v,\Pi v)=\int_{\mathbb R^+} \left ( \rvert f'(r)\rvert^2 +\left ( K_\alpha(r)-\frac{1}{4r^2}+ \frac{E_1(\alpha,r)}{r^2} \right) \lvert f(r)\rvert^2 \right)dr.
\]
To finish the proof we use \eqref{2.1.2}.
\end{proof}
In order to obtain an upper bound for the form $q_\alpha$ we need to study the quantity $K_\alpha$.
\begin{lemma}\label{majorK}
There exists  $A>0$ such that for all $r>0$ and $\alpha \in (0,\frac{\pi}{2})$ we have
$K_\alpha(r) \leq A \alpha ^2$.
\end{lemma}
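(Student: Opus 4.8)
The plan is to collapse the two-variable quantity $K_\alpha(r)$ into a single scalar function of the product $\beta:=r\alpha$ and then to show that this function is bounded on $(0,+\infty)$. First I would pass to the rescaled angular variable $\sigma:=\theta/\alpha\in(-1,1)$ and set $m:=m(\beta)$, recalling from subsection~\ref{interval} that $m(\beta)\tanh m(\beta)=\beta$, so $m$ is a smooth positive function of $\beta>0$. Writing $\tilde\Phi(r,\theta)=C_\alpha(r)\cosh(m\sigma)$ and inserting the explicit derivatives of $C_\alpha$ and $\Phi_\alpha$ recorded in subsection~\ref{interval}, a direct computation gives
\[
\partial_r\tilde\Phi(r,\theta)=\sqrt{\alpha}\,m'(\beta)\,g^{-1/2}\Big(\sigma\sinh(m\sigma)-g^{-1}h\,\cosh(m\sigma)\Big),
\]
where $g:=\frac{\sinh(2m)}{2m}+1$ and $h:=\frac{\cosh(2m)}{2m}-\frac{\sinh(2m)}{(2m)^2}=\tfrac12 g'(m)$. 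Substituting $\theta=\alpha\sigma$ into the integral defining $K_\alpha$ then yields
\[
K_\alpha(r)=\alpha^2 F(\beta),\qquad
F(\beta):=m'(\beta)^2\,g^{-1}\int_{-1}^1\Big(\sigma\sinh(m\sigma)-g^{-1}h\,\cosh(m\sigma)\Big)^2 d\sigma.
\]
Since $\beta=r\alpha$ runs over all of $(0,+\infty)$ as $(r,\alpha)$ vary, the claim $K_\alpha(r)\le A\alpha^2$ is \emph{equivalent} to a bound $F\le A$ on $(0,+\infty)$, uniform in both $r$ and $\alpha$.

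The function $F$ is continuous (indeed smooth) on $(0,+\infty)$, because $m(\cdot)$ is smooth and $g\ge 1>0$ there; hence it suffices to check that $F$ stays bounded as $\beta\to 0^+$ and as $\beta\to+\infty$, and both regimes I would treat with crude bounds rather than precise asymptotics. For $\beta\to 0^+$ one has $m\to 0$, and I would use that $g\ge 1$, that $h=\tfrac12 g'(m)=O(m)$ (as $g$ is a smooth even function of $m$ with $g(0)=2$, hence $g'(0)=0$), and that $\lvert\sinh(m\sigma)\rvert\le Cm$ and $\lvert\cosh(m\sigma)\rvert\le C$ uniformly for $\lvert\sigma\rvert\le 1$. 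This makes the bracket $O(m)$, so the integral is $O(m^2)$; on the other hand $m'(\beta)=\big(\tanh m+m\,\mathrm{sech}^2 m\big)^{-1}$ satisfies $m'\,m=O(1)$, i.e. $m'(\beta)^2=O(m^{-2})$. The two estimates combine to give $F=O(1)$ near $0$.

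For $\beta\to+\infty$ one has $m\to+\infty$, $m'\to 1$ (so $m'$ stays bounded), and $g\ge\frac{\sinh(2m)}{2m}\ge\frac{e^{2m}}{8m}$ for large $m$, whence $g^{-1}\le 8m\,e^{-2m}$. Bounding the integrand by $(1+\lvert c\rvert)^2\cosh^2(m\sigma)\le 9\,e^{2m\lvert\sigma\rvert}$, with $c:=g^{-1}h$ bounded since $c\to 1$, and using $\int_{-1}^1 e^{2m\lvert\sigma\rvert}\,d\sigma\le e^{2m}/m$, the integral is $O(e^{2m}/m)$, which cancels the $g^{-1}$ factor and leaves $F$ bounded. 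Collecting the three steps shows $F$ is bounded on $(0,+\infty)$ by some constant $A$, proving $K_\alpha(r)\le A\alpha^2$. The only genuinely delicate point is the behaviour near $\beta=0$: there $m'(\beta)\sim(2\sqrt{\beta})^{-1}$ diverges, and boundedness of $F$ hinges on the cancellation of the leading constant contributions in $\sigma\sinh(m\sigma)-g^{-1}h\cosh(m\sigma)$, which forces the bracket to be $O(m)$ and thus to absorb the singular factor $m'(\beta)^2$.
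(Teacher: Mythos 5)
Your proof is correct and takes essentially the same approach as the paper: both reduce $K_\alpha(r)$ via the scaling $\beta=r\alpha$ to the boundedness of a continuous one-variable function $F$ on $(0,+\infty)$, with the crucial point in both cases being the cancellation near $\beta=0$, where $h=\tfrac12 g'(m)=O(m)$ absorbs the singular factor $m'(\beta)^2=O(m^{-2})$. The only cosmetic differences are that you keep the cross term and get the exact identity $K_\alpha(r)=\alpha^2 F(\beta)$ with crude order bounds at the two endpoints, whereas the paper discards the cross term via $K_\alpha\le 2T_1+2T_2$ and computes the precise limits of the resulting function.
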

\begin{proof}
We first notice that $K_\alpha$ is continuous on $\mathbb R_+$. In addition,
\begin{align*}
K_\alpha(r)&=\int_{-\alpha}^{\alpha}\lvert C'(r)\Phi(r,\theta)\lvert^2 d\theta+\int_{-\alpha}^{\alpha} \lvert C(r)\partial_r \Phi(r,\theta)\lvert^2 d\theta+2\int_{-\alpha}^{\alpha} C'(r)C(r)\Phi(r,\theta)\partial_r \Phi(r,\theta) d\theta \\
&\leq 2 \int_{-\alpha}^{\alpha}\lvert C'(r) \Phi(r,\theta)\lvert^2 d\theta + 2 \int_{-\alpha}^{\alpha} \lvert C(r)\partial_r \Phi(r,\theta)\lvert^2 d\theta \\
&\coloneqq 2T_1+2T_2.
\end{align*}
Then,
\[
T_1=\left\lvert\frac{C'(r)}{C(r)}\right\rvert^2=\alpha^2\lvert m'(r\alpha)\rvert^2\left (\frac{\sinh\big(2m(r \alpha)\big)}{2m(r \alpha)}+1\right)^{-2}\left(\frac{\cosh\big(2m(r \alpha)\big)}{2m(r \alpha)}-\frac{\sinh\big(2m(r\alpha)\big)}{(2m(r \alpha))^2}\right).
\]
We also give an upper bound for the second term $T_2$:
\[
T_2 \leq \lvert C(r)\rvert ^2 \lvert m'(r\alpha)\vert^2 \alpha^2 \int_{-\alpha}^\alpha \sinh^2\Big(m(r\alpha)\frac{\theta}{\alpha}\Big) d\theta = \lvert C(r)\rvert ^2 \lvert m'(r\alpha)\vert^2 \alpha^3 \left ( \frac{\sinh\big(2m(r \alpha)\big)}{2m(r \alpha)}-1\right).
\]
Finally, we arrive at $K_\alpha(r) \leq 2 \alpha^2 F(r\alpha)$ with
\begin{align*}
F(r\alpha)&=\lvert m'(r\alpha)\vert^2 \left(\dfrac{\sinh\big(2m(r\alpha)\big)}{2m(r \alpha)}+1\right)^{-2} \left(\dfrac{\cosh\big(2m(r \alpha)\big)}{2m(r \alpha)}-\dfrac{\sinh\big(2m(r \alpha)\big)}{(2m(r\alpha))^2}\right)^2 \\
&\quad+\lvert m'(r\alpha) \rvert^2 \Big(\dfrac{\sinh\big(2m(r\alpha)\big)}{2m(r\alpha)}+1\Big)^{-1}
 \Big(\dfrac{\sinh\big(2m(r\alpha)\big)}{2m(r \alpha)}-1\Big).
\end{align*}
In order to conclude it is sufficient to show that $F$ is bounded on $\mathbb R_+$. As the function $F$ is continuous, we only have to prove that it admits finite limits in $0$ and $+\infty$. Let $x=r\alpha$. The function $x\mapsto m(x)$ is $C^1(\mathbb R_+)$ with $m(0)=0$,  $m(x)\sim x$ for $x\to +\infty$, and 
\[
m'(x)=-\frac{\partial_x E_1(1,x)}{2\sqrt{-E_1(1,x)}}.
\]
Furthermore, we have by \eqref{2.1.12}
\[
\partial_x E_1(1,x)=-2\,\dfrac{\cosh^2 m(x)}{\dfrac{\sinh\big(2m(x)\big)}{2m(x)}+1}.
\]
Then we can write $F(x)=G(x)+H(x)$, where
\begin{align*}
G(x)&=\frac{\cosh^4\big(m(x)\big)}{m^2(x)} \left(\frac{\sinh\big(2m(x)\big)}{2m(x)}+1\right)^{-4} \left(\frac{\cosh\big(2m(x)\big)}{2m(x)}-\frac{\sinh\big(2m(x)\big)}{(2m(x))^2}\right)^2,\\
H(x)&=\frac{\cosh^4\big(m(x)\big)}{m^2(x)} \dfrac{\dfrac{\sinh\big(2m(x)\big)}{2m(x)}-1}{\Big(\dfrac{\sinh\big(2m(x)\big)}{2m(x)}+1\Big)^3}.
\end{align*}
After a direction computation using the behavior of $m$ as $x\to 0$ and $x\to +\infty$ we get
\begin{align*}
G(x)&\to \frac{4}{9} \text{ as } x\to 0,\quad &H(x)&\to \frac{1}{12} \text{ as } x\to 0, \\
G(x)&\to 4 \text{ as } x\to+\infty,\quad & H(x)&\to \frac{1}{4} \text{ as } x\to+\infty.
\end{align*}
Then $F$ admits finite limits too, which concludes the proof.
\end{proof}

\begin{Corollaires}{\label{Cor4.9}}
For all $v\in \mathcal G$ and for all $\alpha\in (0,\frac{\pi}{2})$, 
\[q_\alpha(\Pi v,\Pi v) \leq \int_{\mathbb R_+}\left( \big\lvert f'(r)\big\rvert^2 +\Big(-\frac{1}{4r^2}+\frac{E_1(1,r\alpha)}{(r\alpha)^2}\Big) \big\lvert f(r)\big\rvert^2 \right ) dr + \alpha^2 A \lVert \Pi v\rVert^2_{L^2(V_\alpha)}.\]
\end{Corollaires}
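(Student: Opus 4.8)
The plan is to read off this bound directly from the exact identity of Proposition~\ref{Prop4.5} together with the pointwise estimate of Lemma~\ref{majorK}, so that no new computation is required. First I would recall that Proposition~\ref{Prop4.5} expresses $q_\alpha(\Pi v,\Pi v)$ as the one-dimensional integral
\[
\int_{\mathbb R_+}\Big(\lvert f'(r)\rvert^2+\Big(K_\alpha(r)-\tfrac{1}{4r^2}+\tfrac{E_1(1,\alpha r)}{(\alpha r)^2}\Big)\lvert f(r)\rvert^2\Big)dr,
\]
in which the only term not already present on the right-hand side of the desired inequality is the contribution $K_\alpha(r)\lvert f(r)\rvert^2$. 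The strategy is therefore to control this single term uniformly in $r$.

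Next I would invoke Lemma~\ref{majorK}, which furnishes a constant $A>0$ with $K_\alpha(r)\le A\alpha^2$ for all $r>0$ and all $\alpha\in(0,\frac{\pi}{2})$. Replacing $K_\alpha(r)$ by $A\alpha^2$ in the integrand and splitting off the resulting term yields
\[
q_\alpha(\Pi v,\Pi v)\le \int_{\mathbb R_+}\Big(\lvert f'(r)\rvert^2+\Big(-\tfrac{1}{4r^2}+\tfrac{E_1(1,\alpha r)}{(\alpha r)^2}\Big)\lvert f(r)\rvert^2\Big)dr+A\alpha^2\int_{\mathbb R_+}\lvert f(r)\rvert^2\,dr,
\]
which already has the shape of the claimed estimate.

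It remains to identify the last integral with $\lVert\Pi v\rVert^2_{L^2(V_\alpha)}$. Here I would use that $\tilde\Phi(r,\cdot)$ is normalized, namely $\int_{-\alpha}^\alpha\lvert\tilde\Phi(r,\theta)\rvert^2d\theta=1$ for every $r$, which is precisely the property already exploited in the proof of Proposition~\ref{Prop4.5}. By Fubini's theorem one has $\lVert\Pi v\rVert^2_{L^2(V_\alpha)}=\int_{\mathbb R_+}\lvert f(r)\rvert^2\big(\int_{-\alpha}^\alpha\lvert\tilde\Phi(r,\theta)\rvert^2d\theta\big)dr=\int_{\mathbb R_+}\lvert f(r)\rvert^2dr$, and substituting this into the previous display gives the corollary.

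Since every step is a direct substitution, there is no genuine obstacle here: the entire analytic content has already been packaged into Proposition~\ref{Prop4.5} and Lemma~\ref{majorK}. The only point requiring any care is the norm identification in the final step, and even this is immediate once the $L^2(-\alpha,\alpha)$-normalization of the transverse eigenfunction $\tilde\Phi$ is used.
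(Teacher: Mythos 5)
Your proposal is correct and is exactly the argument the paper intends: the corollary is stated without proof precisely because it follows by substituting the bound $K_\alpha(r)\le A\alpha^2$ of Lemma~\ref{majorK} into the identity of Proposition~\ref{Prop4.5} and using $\lVert \Pi v\rVert_{L^2(V_\alpha)}=\lVert f\rVert_{L^2(\mathbb R_+)}$, which the paper itself records (just before Proposition~\ref{sbb1}) as a consequence of the normalization of $\tilde\Phi$. Your write-up fills in these immediate steps correctly, with no gaps.
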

The next proposition gives an upper bound for the Rayleigh quotients.
\begin{proposition}\label{upperbound}
There exist  $M>0$ and $\alpha_0>0$ such that $\Lambda_n(T_\alpha) \leq \mathcal E_n(\alpha) + M$
for all $n\in\mathbb N$
and all $\alpha\in(0,\alpha_0)$.
\end{proposition}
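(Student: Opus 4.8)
The plan is to bound $\Lambda_n(T_\alpha)=\Lambda_n(Q_\alpha)$ from above by producing, for each $n$, an $n$-dimensional trial space inside $\mathcal G$ whose Rayleigh quotients are controlled by those of the model operator $H^\infty_\alpha$. The natural trial functions are the tensorized functions $v=f\tilde\Phi$, where $f$ runs over compactly supported approximations of the eigenfunctions $\psi_1,\dots,\psi_n$ of $H^\infty_\alpha$ from Subsection~\ref{op1D}, and $\tilde\Phi=\tilde\Phi_\alpha(r,\cdot)$ is the normalized ground state of $B_{\alpha,r}$ already used to define the projection $\Pi$. The point of this choice is that such $v$ are fixed by $\Pi$, so the one-sided bound of Corollary~\ref{Cor4.9} applies to them with no remainder coming from $Pv$.

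Concretely, for $v=f\tilde\Phi$ with $f\in C^\infty_c(\mathbb R_+)$ one checks $\Pi v=v$ (because $\int_{-\alpha}^\alpha|\tilde\Phi|^2\,d\theta=1$), hence $v\in\mathcal G$ and $\lVert v\rVert^2_{L^2(V_\alpha)}=\lVert f\rVert^2_{L^2(\mathbb R_+)}$. Corollary~\ref{Cor4.9} then gives
\[
q_\alpha(v,v)\le\int_{\mathbb R_+}\Big(|f'|^2+\Big(-\frac{1}{4r^2}+\frac{E_1(1,r\alpha)}{(r\alpha)^2}\Big)|f|^2\Big)\,dr+\alpha^2A\,\lVert f\rVert^2.
\]
The key algebraic step is to insert Proposition~\ref{Prop2.5}, namely $E_1(1,r\alpha)=-r\alpha+(r\alpha)^2\phi(r\alpha)$, which yields $E_1(1,r\alpha)/(r\alpha)^2=-1/(\alpha r)+\phi(r\alpha)$. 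This turns the bracketed quadratic form into exactly $h_\alpha(f,f)$ plus the error $\int_{\mathbb R_+}\phi(r\alpha)|f|^2\,dr$, bounded by $\lVert\phi\rVert_\infty\lVert f\rVert^2$ since $\phi\in L^\infty(\mathbb R_+)$. Altogether,
\[
q_\alpha(v,v)\le h_\alpha(f,f)+\big(\lVert\phi\rVert_\infty+\alpha^2A\big)\lVert f\rVert^2.
\]

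It remains to optimize over $f$. Since $\mathcal E_n(\alpha)=E_n(H^\infty_\alpha)$ lies strictly below $\inf\Specess H^\infty_\alpha=0$ and $C^\infty_c(\mathbb R_+)$ is a form core for the Friedrichs extension $H^\infty_\alpha$, the min-max principle gives $\mathcal E_n(\alpha)=\inf_F\sup_{f\in F}h_\alpha(f,f)/\lVert f\rVert^2$, the infimum being over $n$-dimensional subspaces $F\subset C^\infty_c(\mathbb R_+)$. Each such $F$ maps to an $n$-dimensional space $G=\{f\tilde\Phi:f\in F\}\subset\mathcal G$, and applying the min-max principle to $Q_\alpha$ over the dense set $\mathcal G$, then taking the infimum over $F$ (the error $\lVert\phi\rVert_\infty+\alpha^2A$ being independent of $F$), one obtains
\[
\Lambda_n(T_\alpha)=\Lambda_n(Q_\alpha)\le\mathcal E_n(\alpha)+\lVert\phi\rVert_\infty+\alpha^2A.
\]
Fixing any $\alpha_0\in(0,\frac{\pi}{2})$ and setting $M:=\lVert\phi\rVert_\infty+\alpha_0^2A$ finishes the proof for all $\alpha\in(0,\alpha_0)$, and both $M$ and $\alpha_0$ are independent of $n$.

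The main obstacle is the combination of the two reduction ingredients: verifying that the candidate trial functions genuinely lie in $\mathcal G$ (which forces one to work with compactly supported $f$, hence with the form-core description of $H^\infty_\alpha$ rather than with the eigenfunctions $\psi_k$ themselves), and carrying out the identification of the effective potential through Proposition~\ref{Prop2.5}. It is precisely this expansion that replaces the interval-eigenvalue term $E_1(1,r\alpha)/(r\alpha)^2$ by the attractive Coulomb-type term $-1/(\alpha r)$ of $H_\alpha$, up to the bounded remainder $\phi(r\alpha)$, and thus makes the comparison produce the \emph{exact} model eigenvalue $\mathcal E_n(\alpha)$ rather than a merely asymptotic match.
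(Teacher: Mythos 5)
Your proof is correct and follows essentially the same route as the paper's: trial functions of the form $f\tilde\Phi$ with $f\in C^\infty_c(\mathbb R_+)$ (so that $\Pi v=v$), the bound of Corollary~\ref{Cor4.9} combined with the expansion $E_1(1,r\alpha)=-r\alpha+(r\alpha)^2\phi(r\alpha)$ from Proposition~\ref{Prop2.5}, and the min-max principle using that $C^\infty_c(\mathbb R_+)$ is a form core for the Friedrichs extension $H^\infty_\alpha$. Your choice $M=\lVert\phi\rVert_\infty+\alpha_0^2A$ is in fact slightly more careful than the paper's, which writes $M=\lVert\phi\rVert_\infty+\alpha^2A$ with an implicit $\alpha$-dependence.
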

\begin{proof}
By Proposition \ref{Prop2.5} and Corollary \ref{Cor4.9}, for all $v\in \mathcal G$ there holds
\[
q_\alpha(\Pi v,\Pi v)\leq \int_{\mathbb R_+} \left( \big\lvert f'(r)\big\rvert^2 -\frac{1}{4r^2} \big\lvert f(r)\big\rvert^2 -\frac{1}{\alpha r} \big\lvert f(r)\big\rvert^2 \right ) dr 
+ \|\phi\|_\infty  \lVert f\rVert ^2_{L^2(\mathbb R^+)} + \alpha^2 A \lVert \Pi v \rVert ^2_{L^2(V_\alpha)}.
\]
Let $M=\|\phi\|_\infty+\alpha^2A$, then,
\begin{equation}
 q_\alpha(\Pi v,\Pi v)\leq \int_{\mathbb R_+} \left( \lvert f'(r)\rvert^2 -\frac{1}{4r^2} \lvert f(r)\rvert^2 -\frac{1}{\alpha r} \lvert f(r)\rvert^2 \right ) dr + M \lVert \Pi v\rVert^2_{L^2(V_\alpha)}.
\label{maj1}
\end{equation}
Let $n\in \mathbb N$ and $G\subset C^\infty_c(\mathbb R_+)$ such that $\dim G=n$. Denote
$\tilde G=\{v=g\tilde \Phi: g\in G\}$,
then $\tilde G \subset \mathcal G$ and $\dim \tilde G=n$. By \eqref{maj1} we obtain
\[\sup_{\substack{v\in \tilde G\\v\neq 0}} \dfrac{q_\alpha(v,v)}{\|v\|^2_{L^2(V_\alpha)}}
 \leq \sup_{\substack{g\in G\\g\neq 0}}\dfrac{ h_a(g,g)}{\|g\|^2_{L^2(\mathbb R_+)}} +M,\]
 and by min-max principle,
\[
\Lambda_n(T_\alpha)\leq \inf_{\substack{G\subset C^\infty_c(\mathbb R_+)\\ \dim G=n}}\sup_{\substack{v\in \tilde G\\v\neq 0}} \frac{q_\alpha(v,v)}{\|v\|^2}.
\]
Recall that $C^\infty_c(\mathbb R_+)$ is dense in $Q(H^\infty_\alpha)$
as $H^\infty_\alpha$ is the Friedrichs extension of $H_a$ (see subsection \ref{op1D}), hence,
\[
\inf_{\substack{G\subset C^\infty_c(\mathbb R_+)\\ \dim G=n}}\sup_{\substack{g\in G\\g\neq 0}}\frac{h_a(g,g)}{\|g\|^2}=\mathcal E_n(\alpha),
\]
which concludes the proof.
\end{proof}

\subsubsection{Lower bound of Theorem \ref{Thprinc}.}

Here we will still use the orthogonal projections $\Pi$ and $P$
defined in \eqref{4.2.3.1}. We recall that for $v\in \mathcal G$ we have $\Pi v= f\tilde \Phi \in \mathcal G$, $P v=v-\Pi v \in \mathcal G$, and $\lVert \Pi v\rVert_{L^2(V_\alpha)}=\lVert f\rVert_{L^2(\mathbb R_+)}$.

\begin{proposition}\label{sbb1}
For all $v \in \mathcal G$ and for all $\alpha \in (0,\frac{\pi}{2})$,
\begin{multline*}
q_\alpha(v,v)\geq\int_{\mathbb R_+}\left(\Big(1-\alpha^2\frac{r}{r+1}\Big)\lvert f'(r)\rvert^2 +\Big(\frac{E_1(1,\alpha r)}{(\alpha r)^2}-\frac{1}{4r^2}\Big) \lvert f(r)\rvert^2 \right)dr \\
+\int_{V_\alpha} \left((1-\alpha)\lvert \partial_r Pv(r,\theta)\rvert^2 + \Big(\frac{E_2(1,\alpha r)}{(\alpha r)^2}-\frac{1}{4r^2}-A\frac{r+1}{r}\Big)\lvert Pv(r,\theta)\rvert^2 \right)d\theta dr\\
-\alpha A \lVert \Pi v\rVert^2_{L^2(V_\alpha)},
\end{multline*}
with the constant $A$ from Lemma \ref{majorK}.
\end{proposition}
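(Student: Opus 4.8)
The plan is to split $q_\alpha(v,v)$ into three pieces—the radial part $\int_{V_\alpha}|v_r|^2$, the transverse part (the angular derivative together with the two boundary terms), and the Hardy potential $-\int_{V_\alpha}|v|^2/(4r^2)$—and to estimate each one on $\Pi v$ and $Pv$ separately. For the transverse part I rewrite, for each fixed $r$,
\[
\int_{-\alpha}^\alpha\frac{|v_\theta|^2}{r^2}\,d\theta-\frac{|v(r,\alpha)|^2+|v(r,-\alpha)|^2}{r}=\frac{1}{r^2}\,b_{\alpha,r}\big(v(r,\cdot),v(r,\cdot)\big),
\]
where $b_{\alpha,r}$ is the form of the interval operator $B_{\alpha,r}$ of subsection~\ref{interval}. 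Since $\Pi v(r,\cdot)$ is proportional to the first eigenfunction $\tilde\Phi(r,\cdot)$ and $Pv(r,\cdot)$ is $L^2(-\alpha,\alpha)$-orthogonal to it, the spectral theorem gives $b_{\alpha,r}(\Pi v,Pv)=0$, so $b_{\alpha,r}(v,v)=E_1(\alpha,r)|f(r)|^2+b_{\alpha,r}(Pv,Pv)$, and the min-max principle yields $b_{\alpha,r}(Pv,Pv)\ge E_2(\alpha,r)\|Pv(r,\cdot)\|^2$. The scaling identity \eqref{2.1.2}, which reads $E_j(\alpha,r)=\alpha^{-2}E_j(1,\alpha r)$, then turns these into the coefficients $E_1(1,\alpha r)/(\alpha r)^2$ and $E_2(1,\alpha r)/(\alpha r)^2$ appearing in the statement. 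The Hardy potential splits trivially by the same angular orthogonality, producing the two $-1/(4r^2)$ terms.

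The heart of the argument is the radial part, and this is where I expect the main obstacle. I expand $|v_r|^2=|\partial_r\Pi v|^2+|\partial_r Pv|^2+2\Re\big(\partial_r\Pi v\,\overline{\partial_r Pv}\big)$ and invoke the computation of Proposition~\ref{Prop4.5}, which gives $\int_{V_\alpha}|\partial_r\Pi v|^2=\int_{\mathbb R_+}\big(|f'|^2+K_\alpha(r)|f|^2\big)\,dr\ge\int_{\mathbb R_+}|f'|^2\,dr$ since $K_\alpha\ge0$. The delicate term is the cross term: a naive Cauchy--Schwarz would ruin the coefficient of $|f'|^2$. The key is to write $\partial_r\Pi v=f'\tilde\Phi+f\,\partial_r\tilde\Phi$ and to differentiate the orthogonality relation $\int_{-\alpha}^\alpha\tilde\Phi\,\overline{Pv}\,d\theta=0$ in $r$, which yields $\int_{-\alpha}^\alpha\tilde\Phi\,\overline{\partial_r Pv}\,d\theta=-\int_{-\alpha}^\alpha\partial_r\tilde\Phi\,\overline{Pv}\,d\theta$. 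This removes the dangerous $f'\tilde\Phi$ contribution, leaving
\[
2\Re\!\int_{V_\alpha}\partial_r\Pi v\,\overline{\partial_r Pv}=-2\Re\!\int_{V_\alpha}f'\,\partial_r\tilde\Phi\,\overline{Pv}+2\Re\!\int_{V_\alpha}f\,\partial_r\tilde\Phi\,\overline{\partial_r Pv},
\]
in which every occurrence of $\partial_r\tilde\Phi$ is small, being controlled by $\int_{-\alpha}^\alpha|\partial_r\tilde\Phi|^2\,d\theta=K_\alpha(r)\le A\alpha^2$ via Lemma~\ref{majorK}.

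It then remains to apply Cauchy--Schwarz in $\theta$ followed by Young's inequality with carefully tuned $r$-dependent weights. For the first cross term I weight $|f'|^2$ by $\alpha^2 r/(r+1)$; since $K_\alpha(r)(r+1)/(\alpha^2 r)\le A(r+1)/r$, this produces exactly the loss $-\alpha^2\frac{r}{r+1}|f'|^2$ and the term $-A\frac{r+1}{r}|Pv|^2$. For the second cross term, Young's inequality with weight $\alpha$ on $|\partial_r Pv|^2$ gives $-\alpha\int_{V_\alpha}|\partial_r Pv|^2$ together with $-\alpha^{-1}K_\alpha(r)|f|^2\ge -A\alpha|f|^2$; integrating in $r$ and using $\|f\|_{L^2(\mathbb R_+)}^2=\|\Pi v\|_{L^2(V_\alpha)}^2$ yields precisely the remainder $-\alpha A\|\Pi v\|^2_{L^2(V_\alpha)}$. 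Collecting the three pieces, discarding the nonnegative $K_\alpha|f|^2$, and combining the radial coefficients into $1-\alpha^2 r/(r+1)$ and $1-\alpha$ reproduces the claimed inequality. Thus the only genuine difficulty is the integration-by-parts identity for $\int_{-\alpha}^\alpha\tilde\Phi\,\overline{\partial_r Pv}\,d\theta$: it is what makes the sharp weights compatible with keeping the coefficient of $|f'|^2$ arbitrarily close to $1$.
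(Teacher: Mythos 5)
Your proposal is correct and takes essentially the same route as the paper's proof: decomposing $q_\alpha$ over $\Pi v$ and $Pv$, using the spectral theorem for $B_{\alpha,r}$ to get the $E_2(\alpha,r)$ bound on the $Pv$-part, differentiating the orthogonality relation $\int_{-\alpha}^\alpha \tilde\Phi\,\overline{Pv}\,d\theta=0$ to eliminate the dangerous $f'\tilde\Phi$ contribution in the cross term, and then applying Young's inequality with exactly the paper's weights $\epsilon(r)=r/\big(A(r+1)\big)$ and $\alpha$ together with Lemma~\ref{majorK}. The only difference is cosmetic bookkeeping (you split the form into radial, transverse and Hardy pieces rather than expanding $q_\alpha(\Pi v+Pv,\Pi v+Pv)$), which leads to the identical estimates.
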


\begin{proof}
Let $v\in \mathcal G$, then
\begin{equation}
      \label{qqq1}
q_\alpha(v,v)=q_\alpha(\Pi v,\Pi v)+q_\alpha(Pv,Pv)+2\Re q_\alpha(\Pi v,Pv).
\end{equation}
The first term is known thanks to proposition \ref{Prop4.5}. Then we have 
\begin{multline*}
q_\alpha(Pv,Pv)=\int_{V_\alpha} \lvert \partial_r Pv(r,\theta)\rvert^2 -\frac{\lvert Pv(r,\theta)\rvert^2}{4r^2} drd\theta \\
+ \int_{\mathbb R^+} \frac{1}{r^2} \left(\int_{-\alpha}^\alpha \lvert \partial_\theta Pv(r,\theta)\rvert^2 d\theta-r\lvert Pv(r,\alpha) \rvert^2 -r\lvert Pv(r,-\alpha)\rvert^2 \right)dr.
\end{multline*}
Applying the spectral theorem to the operator $B_{\alpha,r}$ we obtain
\[
\int_{-\alpha}^\alpha \lvert \partial_\theta Pv(r,\theta)\rvert^2 d\theta-r\lvert Pv(r,\alpha) \rvert^2 -r\lvert Pv(r,-\alpha)\rvert^2 \geq E_2(\alpha,r)\lVert Pv\rVert^2_{L^2(-\alpha,\alpha)}, \quad r>0.
\]
Using the equality $E_2(\alpha,r)=\alpha^{-2}E_2(1,r\alpha)$ we finally get
\begin{equation}
\label{sbb1.2}
q_\alpha(Pv,Pv)\geq \int_{V_\alpha}\left ( \lvert\partial_r Pv(r,\theta)\rvert^2+\left(\frac{E_2(1,r\alpha)}{(r\alpha)^2}-\frac{1}{4r^2}\right)\lvert Pv(r,\theta)\rvert^2 d\theta\right) dr.
\end{equation}
To estimate the last term in \eqref{qqq1} we write
\begin{multline}
\label{croise1}
q_\alpha(\Pi v,Pv)=\int_{V_\alpha} \left (\overline{(\partial_r\Pi v)}(\partial_r Pv) -\frac{\overline{\Pi v} Pv}{4r^2} \right) d\theta dr \\
+\int_{\mathbb R^+} \Big(\frac{1}{r^2} \int_{-\alpha}^\alpha \overline{(\partial_\theta \Pi v)}(\partial_\theta Pv) d\theta-r
 \big( \overline{\Pi v}(r,\alpha)Pv(r,\alpha)+\overline{\Pi v}(r,-\alpha)Pv(r,-\alpha)\big)\Big )dr.
\end{multline}
The functions $\Pi v$ and $Pv$ are orthogonal with respect to the scalar product of $L^2(-\alpha,\alpha)$, 
\[
\int_{-\alpha}^\alpha \overline{\mathstrut\Pi v} Pvd\theta=0,
\]
and  with respect to the form $b_{\alpha,r}$, i.e. 
\[
\int_{-\alpha}^\alpha \overline{(\partial_\theta \Pi v)}( \partial_\theta Pv) d\theta-r \Big( \overline{\Pi v}(r,\alpha)Pv(r,\alpha)+\overline{\Pi v}(r,-\alpha)Pv(r,-\alpha)\Big)=0.
\]
Then \eqref{croise1} becomes
\begin{equation}
\label{croise2}
\begin{aligned}
q_\alpha(\Pi v,Pv)&=\int_{V_\alpha}\overline{(\partial_r\Pi v)}( \partial_r Pv) d\theta dr\\
&=\int_{V_\alpha}\Big(\overline{f'(r)}\tilde \Phi(r,\theta) + \overline{f(r)} \partial_r\tilde \Phi(r,\theta) \Big) \partial_r Pv(r,\theta) d\theta dr.
\end{aligned}
\end{equation}
In addition, $\displaystyle \int_{-\alpha}^\alpha Pv(r,\theta) \tilde \Phi(r,\theta) d\theta=0$, and the derivative in $r$ gives
\begin{align*}
\int_{-\alpha}^\alpha \left(\partial_r Pv(r,\theta)\right) \tilde \Phi(r,\theta) d\theta= -\int_{-\alpha}^\alpha Pv(r,\theta)  \partial_r \tilde \Phi(r,\theta) d\theta.
\end{align*}
The substitution into \eqref{croise2} gives:
\begin{equation}
\label{croise3}
q_\alpha(\Pi v,Pv)=\int_{V_\alpha}\overline{f(r)}\left(\partial_r\tilde \Phi(r,\theta)\right)\left( \partial_r Pv(r,\theta)\right) drd\theta- \int_{V_\alpha} \overline{f'(r)} \left(\partial_r\tilde \Phi(r,\theta)\right) Pv(r,\theta) drd\theta.
\end{equation}
To get a lower bound, we notice that
\[
\left \lvert\int_{V_\alpha}\overline{f(r)}\partial_r\tilde \Phi(r,\theta)\partial_r P v(r,\theta) drd\theta \right \rvert \leq \frac{1}{2\alpha} \int_{V_\alpha} \lvert f(r)\rvert^2 \lvert \partial_r \tilde \Phi(r,\theta)\rvert^2 drd\theta+\frac{\alpha}{2} \int_{V_\alpha} \lvert \partial_r Pv(r,\theta)\rvert^2drd\theta,
\]
and using lemma \ref{majorK} we obtain
\begin{equation}
\label{croise4}
\left \lvert \int_{V_\alpha}\overline{f(r)}\partial_r\tilde \Phi(r,\theta)\partial_rP v(r,\theta) drd\theta \right \rvert \leq \frac{\alpha A}{2}\int_{\mathbb R_+}\lvert f(r)\rvert^2 dr +\frac{\alpha}{2} \int_{V_\alpha}\lvert\partial_rPv(r,\theta)\rvert^2 drd\theta.
\end{equation}
Furthermore for any $\epsilon(r)>0$ we have
\begin{multline*}
\left \lvert \int_{V_\alpha} \overline{f'(r)} Pv(r,\theta)\partial_r \tilde \Phi(r,\theta)drd\theta \right \rvert  \\
 \leq \frac{1}{2} \int_{V_\alpha} \frac{1}{\epsilon(r)} \lvert Pv(r,\theta)\rvert^2 drd\theta+\frac{1}{2}\int_{V_\alpha} \epsilon(r) \lvert f'(r)\rvert^2 \lvert \partial_r \tilde \Phi(r,\theta)\rvert^2 drd\theta,
\end{multline*}
and using again Lemma \ref{majorK} and $\epsilon(r)=\displaystyle \frac{r}{A(r+1)}$ we get
\begin{multline}
\label{croise5}
\left \lvert \int_{V_\alpha} \overline{f'(r)} Pv(r,\theta)\partial_r \tilde \Phi(r,\theta)drd\theta \right \rvert \\
\leq \frac{1}{2} \int_{V_\alpha} \frac{A(r+1)}{r} \lvert Pv(r,\theta)\rvert^2 drd\theta+\frac{\alpha^2}{2}\int_{\mathbb R_+} \frac{r}{r+1} \lvert f'(r)\rvert^2 dr.
\end{multline}
The substitution of \eqref{croise4} and \eqref{croise5} into \eqref{croise3} gives us the lower bound for the cross-term. Combining it with \eqref{sbb1.2} finally gives us the following lower bound for $q_\alpha$:
\begin{multline*}
q_\alpha(v,v)\geq\int_{\mathbb R_+}\Big((1-\alpha^2\frac{r}{r+1})\lvert f'(r)\rvert^2 +\left(K_\alpha(r)+\frac{E_1(1,\alpha r)}{(\alpha r)^2}-\frac{1}{4r^2}\right) \lvert f(r)\rvert^2 \Big)dr \\
+\int_{V_\alpha} \left((1-\alpha)\lvert \partial_r Pv(r,\theta)\rvert^2 + \left(\frac{E_2(1,\alpha r)}{(\alpha r)^2}-\frac{1}{4r^2}-A\frac{r+1}{r}\right)\lvert Pv(r,\theta)\rvert^2 \right)d\theta dr\\
-\alpha A \lVert f\rVert^2_{L^2(0,+\infty)}.
\end{multline*}
We notice that $K_\alpha$ is positive, which concludes the proof.
\end{proof}
The next proposition gives us a lower bound of $q_\alpha$ in terms of the one-dimensional operator $H^\infty_a$
defined in subsection \ref{op1D}.
\begin{proposition}\label{sbb2}
There exists $K\in\mathbb R_+$ such that for all $\alpha\in (0,1)$ and for all $v \in \mathcal G$, 
\[
q_\alpha(v,v)\geq (1-\alpha^2) \int_{\mathbb R_+}
\bigg[ \lvert f'(r)\rvert^2 -\left(\frac{1}{4r^2}+\frac{1}{r\alpha(1-\alpha^2)}\right) \lvert f(r)\rvert^2 
\bigg]dr - K \lVert v \rVert ^2_{L^2(V_\alpha)}.\]
\end{proposition}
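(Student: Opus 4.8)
The plan is to start from the lower bound of Proposition~\ref{sbb1} and to treat its $\Pi v$-part and $Pv$-part separately: I would compare the $\Pi v$-part with the one-dimensional form $h_a$ of subsection~\ref{op1D} for the parameter $a=\alpha(1-\alpha^2)$ (note that $(1-\alpha^2)h_{\alpha(1-\alpha^2)}(f,f)$ is precisely the right-hand side of the claimed inequality), and I would absorb the whole $Pv$-part, together with all lower-order errors, into the term $-K\|v\|^2$. Throughout, $f$ and $v$ are linked by \eqref{4.2.3.1}, so that $\|f\|_{L^2(\mathbb{R}_+)}=\|\Pi v\|_{L^2(V_\alpha)}\le\|v\|_{L^2(V_\alpha)}$, and $f\in C^\infty_c(\mathbb{R}_+)$.

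For the $Pv$-part I would show that the coefficient
\[
g_\alpha(r):=\frac{E_2(1,\alpha r)}{(\alpha r)^2}-\frac{1}{4r^2}-A\frac{r+1}{r}
\]
is bounded from below by a constant independent of $\alpha\in(0,1)$ and $r>0$. Setting $x=\alpha r$ and using the properties of $E_2(1,\cdot)$ from subsection~\ref{interval} — namely $E_2(1,x)\to E_2(1,0)=\tfrac{\pi^2}{4}$ as $x\to0$ and $E_2(1,x)=-x^2+O(x^2e^{-2x})$ as $x\to+\infty$ — the quotient $E_2(1,x)/x^2$ is continuous on $(0,\infty)$, tends to $+\infty$ at $0$ and to $-1$ at $+\infty$, hence is bounded below. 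Near $x=0$ the terms $-\alpha^2/(4x^2)$ and $-A\alpha/x$ are dominated by the positive blow-up $\tfrac{\pi^2}{4x^2}$ (since $\alpha<1<\pi$), and they are bounded for $x$ away from $0$; this gives $g_\alpha\ge -K_1$ uniformly. Dropping the nonnegative term $(1-\alpha)|\partial_r Pv|^2$ (here $\alpha<1$) then yields the bound $-K_1\|Pv\|^2_{L^2(V_\alpha)}\ge -K_1\|v\|^2_{L^2(V_\alpha)}$ for the $Pv$-part.

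For the $\Pi v$-part I would insert Proposition~\ref{Prop2.5} to write $\frac{E_1(1,\alpha r)}{(\alpha r)^2}=-\frac{1}{\alpha r}+\phi(\alpha r)$ with $\phi$ bounded, and split the kinetic coefficient as $1-\frac{\alpha^2 r}{r+1}=(1-\alpha^2)+\frac{\alpha^2}{r+1}$. Comparing with $(1-\alpha^2)h_{\alpha(1-\alpha^2)}(f,f)$, the $\tfrac1r$-terms cancel exactly while the $\tfrac{1}{r^2}$-terms leave a deficit, so the $\Pi v$-part is bounded below by
\[
(1-\alpha^2)h_{\alpha(1-\alpha^2)}(f,f)+\alpha^2\Big(\int_0^\infty\frac{|f'|^2}{r+1}dr-\frac14\int_0^\infty\frac{|f|^2}{r^2}dr\Big)-\|\phi\|_\infty\|f\|^2.
\]
The main obstacle is to control the bracket, i.e.\ to establish the improved weighted Hardy inequality
\[
\int_0^\infty\frac{|f'(r)|^2}{r+1}dr\ge \frac14\int_0^\infty\frac{|f(r)|^2}{r^2}dr-C\|f\|^2_{L^2(\mathbb{R}_+)}
\]
with $C$ independent of $f\in C^\infty_c(\mathbb{R}_+)$; this is exactly where the improved Hardy inequality of \cite{ref9} is used. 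I would prove it by the ground-state substitution $f=\sqrt{r}\,u$: after one integration by parts the left-hand side minus $\tfrac14\int|f|^2/r^2$ becomes
\[
\int_0^\infty\frac{r}{r+1}|u'|^2dr+\int_0^\infty\Big(\frac{1}{2(r+1)^2}-\frac{1}{4(r+1)}\Big)|u|^2dr,
\]
and since the bracketed potential is nonnegative for $r\le1$ and behaves like $-\tfrac{1}{4r}$ at infinity, it is bounded below by $-Cr$ for a suitable $C$; as $\int_0^\infty r|u|^2dr=\|f\|^2$, the inequality follows.

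Finally I would assemble the two parts, together with the remainder $-\alpha A\|\Pi v\|^2\ge -A\|v\|^2$, use $\alpha<1$ to replace $\alpha^2 C$ by $C$, and collect $K_1$, $A$, $\|\phi\|_\infty$ and $C$ into a single constant $K$. Combined with $\|f\|=\|\Pi v\|\le\|v\|$ and $\|Pv\|\le\|v\|$, this produces exactly the stated lower bound $q_\alpha(v,v)\ge(1-\alpha^2)\,h_{\alpha(1-\alpha^2)}(f,f)-K\|v\|^2_{L^2(V_\alpha)}$.
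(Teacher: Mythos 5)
Your proof is correct, and while it shares the paper's overall skeleton, its key analytic step takes a genuinely different route. Like the paper, you start from Proposition~\ref{sbb1}, you bound the $Pv$-potential uniformly from below (your rescaling $x=\alpha r$ with $E_2(1,x)/x^2\to+\infty$ at $0$ and $\to -1$ at $+\infty$ is exactly the paper's Lemma~\ref{a1sbb}), and you invoke Proposition~\ref{Prop2.5} to replace $E_1(1,\alpha r)/(\alpha r)^2$ by $-1/(\alpha r)$ up to $\lVert\phi\rVert_\infty$. But for the $f$-part the paper's Lemma~\ref{a2sbb} splits the integral into $(0,1)$ and $(1,+\infty)$, quotes the Brezis--Marcus improved Hardy inequality \cite[Lemma A.1]{ref9} in the form \eqref{hardy} on $(0,1)$, and handles $(1,+\infty)$ by separate elementary manipulations; you instead use the exact identity $1-\alpha^2\frac{r}{r+1}=(1-\alpha^2)+\frac{\alpha^2}{r+1}$ and the exact cancellation $(1-\alpha^2)\cdot\frac{1}{\alpha(1-\alpha^2)r}=\frac{1}{\alpha r}$ to reduce everything to the single global inequality $\int_0^\infty\frac{|f'|^2}{r+1}\,dr\ge\frac14\int_0^\infty\frac{|f|^2}{r^2}\,dr-C\lVert f\rVert^2$, which you then prove from scratch by the ground-state substitution $f=\sqrt r\,u$. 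Your computation checks out: $\frac{1}{4r(r+1)}-\frac{1}{4r}=-\frac{1}{4(r+1)}$, the integration by parts of $\frac{(|u|^2)'}{2(r+1)}$ produces $\frac{|u|^2}{2(r+1)^2}$ with no boundary terms since $u=f/\sqrt r\in C^\infty_c(\mathbb R_+)$, and the resulting potential $\frac{1}{2(r+1)^2}-\frac{1}{4(r+1)}=\frac{1-r}{4(r+1)^2}$ is nonnegative on $(0,1]$ and bounded below by a constant multiple of $-r$ on $[1,+\infty)$, so $\int_0^\infty r|u|^2\,dr=\lVert f\rVert^2$ closes the estimate with $C$ independent of $f$ and $\alpha$. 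What your route buys is self-containedness: despite your remark that this is ``where \cite{ref9} is used,'' your ground-state argument actually renders the citation unnecessary, and it avoids the paper's two-region case analysis (whose constant carries a harmless $\alpha^2/4$); what the paper's route buys is brevity, delegating the Hardy improvement to the literature. Your final bookkeeping ($\lVert f\rVert=\lVert\Pi v\rVert\le\lVert v\rVert$, $\lVert Pv\rVert\le\lVert v\rVert$, dropping $(1-\alpha)|\partial_r Pv|^2\ge 0$, and $-\alpha A\lVert\Pi v\rVert^2\ge -A\lVert v\rVert^2$) matches the paper's, so your constant $K=K_1+C+\lVert\phi\rVert_\infty+A$ is admissible and uniform in $\alpha\in(0,1)$, as required.
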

In order to prove Proposition~\ref{sbb2}, we will need some preliminary constructions. Let us introduce a new sesquilinear form,
\[
a_1(u,u)=\int_{V_\alpha}(1-\alpha) \Big(\lvert\partial_r u(r,\theta)\rvert^2 + W(r,\alpha) \lvert u(r,\theta)\rvert^2
\Big) drd\theta,\quad  u\in\mathcal G,
\]
where
\[
W(r,\alpha)=\displaystyle\frac{E_2(1,r\alpha)}{(r\alpha)^2}-\displaystyle\frac{1}{4r^2}-A\displaystyle\frac{r+1}{r}.
\]

\begin{lemma}\label{a1sbb}
There exists $C_1\in\mathbb R_+$ such that, for all $\alpha \in (0,1)$ and for all $u\in D(a_1)$, 
\[
a_1(u,u)\geq -C_1 \lVert u\rVert^2_{L^2(V_\alpha)}.
\]
\end{lemma}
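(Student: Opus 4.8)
The plan is to establish the bound first for $u$ in the core $\mathcal G$ on which $a_1$ was defined, the general case $u\in D(a_1)$ then following by density and closedness of the form. Since $0<1-\alpha<1$, it suffices to produce a constant $C_1$, \emph{independent of $\alpha$}, with
\[
\int_{V_\alpha}\big(|\partial_r u|^2+W(r,\alpha)\,|u|^2\big)\,dr\,d\theta\ge -C_1\,\|u\|^2_{L^2(V_\alpha)}.
\]
Indeed, for any real $x\ge -C_1$ and $\lambda\in(0,1)$ one has $\lambda x\ge -C_1$, so multiplying the un-weighted integral by the factor $1-\alpha\in(0,1)$ preserves the lower bound and yields $a_1(u,u)\ge -C_1\|u\|^2_{L^2(V_\alpha)}$.

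The next step is to isolate the two genuinely singular contributions in $W$. Writing $A\frac{r+1}{r}=A+\frac{A}{r}$ gives
\[
W(r,\alpha)=\frac{E_2(1,r\alpha)}{(r\alpha)^2}-\frac{1}{4r^2}-A-\frac{A}{r}.
\]
First I would dispose of the angular eigenvalue term: the function $g(x):=E_2(1,x)/x^2$ is continuous on $(0,+\infty)$, since $\gamma\mapsto E_2(1,\gamma)$ is continuous; it tends to $+\infty$ as $x\to 0^+$ because $E_2(1,0)=\pi^2/4>0$ by \eqref{2.1.4}; and it tends to $-1$ as $x\to+\infty$ by \eqref{2.1.6}. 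Hence $g$ is bounded below, say $g\ge -c_0$ on $(0,+\infty)$, with $c_0>0$, so that $g(r\alpha)\ge -c_0$ uniformly in $r>0$ and $\alpha\in(0,1)$. It then remains to control the singular part $-\frac{1}{4r^2}-\frac{A}{r}$ against the radial kinetic energy.

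This last step is the main obstacle, and it is precisely where the Hardy-type mechanism is needed: neither $-\frac{1}{4r^2}$ nor $-\frac{A}{r}$ is bounded below near $r=0$, so no pointwise estimate on $W$ can succeed, and the critical coupling $\tfrac14$ leaves no surplus kinetic energy to absorb the Coulomb tail $-A/r$ after a plain Hardy inequality. The resolution is to recognise the one-dimensional radial form
\[
\int_0^\infty\Big(|\partial_r u(r,\theta)|^2-\frac{|u(r,\theta)|^2}{4r^2}-\frac{A\,|u(r,\theta)|^2}{r}\Big)\,dr
\]
as the quadratic form $h_{1/A}$ of the model operator $H_{1/A}$ of subsection~\ref{op1D}. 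For $u\in\mathcal G$ each slice $u(\cdot,\theta)$ is smooth and compactly supported in $(0,+\infty)$, hence a legitimate element of the form core $C^\infty_c(\mathbb R_+)$; and by the results of Appendix~\ref{appb} the Friedrichs extension $H^\infty_{1/A}$ has lowest spectral point $\mathcal E_1(1/A)=-A^2$, so $h_{1/A}(f,f)\ge -A^2\|f\|^2_{L^2(\mathbb R_+)}$ for every such $f$. Integrating this fibre inequality over $\theta\in(-\alpha,\alpha)$ and combining it with $g(r\alpha)\ge -c_0$ and the constant $-A$ gives
\[
\int_{V_\alpha}\big(|\partial_r u|^2+W\,|u|^2\big)\,dr\,d\theta\ge -\big(A^2+c_0+A\big)\,\|u\|^2_{L^2(V_\alpha)}.
\]
Setting $C_1:=A^2+c_0+A$, which is independent of $\alpha$, the reduction of the first paragraph yields $a_1(u,u)\ge -C_1\|u\|^2_{L^2(V_\alpha)}$, as claimed.
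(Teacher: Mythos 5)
Your proof is correct, but it takes a genuinely different route from the paper's, and one of your side claims is false. You assert that ``no pointwise estimate on $W$ can succeed'' because $-\frac{1}{4r^2}-\frac{A}{r}$ is unbounded below near $r=0$; in fact a pointwise bound is exactly what the paper does. The point you missed is that the term $E_2(1,r\alpha)/(r\alpha)^2$ is not merely bounded below by a constant: after the substitution $x=r\alpha$ it carries its own positive singularity $E_2(1,x)/x^2\sim \frac{\pi^2}{4x^2}$ near $x=0$ by \eqref{2.1.4}, and since $\pi^2/4>1/4$ this dominates both negative singular terms, giving $h(x)=\frac{E_2(1,x)}{x^2}-\frac{1}{4x^2}-\frac{A}{x}\sim\frac{\pi^2-1}{4x^2}\to+\infty$ as $x\to 0$, while $h(x)\to -1$ as $x\to+\infty$ by \eqref{2.1.6}; continuity then yields a uniform pointwise lower bound on $W(r,\alpha)$, with no kinetic energy used at all. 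By replacing $E_2(1,x)/x^2$ with the crude constant $-c_0$ you discard this cancellation, and you then must recover the lost singular part $-\frac{1}{4r^2}-\frac{A}{r}$ from the radial kinetic term; your way of doing so --- recognising the fibre form as $h_{1/A}$ and invoking $\inf\Spec H^\infty_{1/A}=\mathcal E_1(1/A)=-A^2$ from Appendix~\ref{appb} --- is legitimate (slices of $u\in\mathcal G$ lie in the form core $C^\infty_c(\mathbb R_+)$, the Appendix~\ref{appb} spectral analysis is independent of this lemma, so there is no circularity, and the reduction of the prefactor $1-\alpha$ is handled correctly). The trade-off: your argument leans on the full spectral analysis of the model operator via Whittaker functions, whereas the paper's needs only continuity of $\gamma\mapsto E_2(1,\gamma)$ and the two endpoint limits already recorded in subsection~\ref{interval}, making it considerably more elementary. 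A final trivial remark: since $a_1$ is defined precisely on $\mathcal G$, one has $D(a_1)=\mathcal G$ and your opening density step is vacuous, though harmless.
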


\begin{proof}
It is sufficient to check that, for all $\alpha\in(0,1)$, the potential $r\mapsto W(r,\alpha)$ is
uniformly semi-bounded from below on $\mathbb R_+$. Notice that
\begin{equation}
\label{min1}
\inf_{r\in\mathbb R_+} W(r,\alpha)=\inf_{r\in\mathbb R_+} W\Big(\frac{r}{\alpha},\alpha\Big) = \inf_{x\in\mathbb R_+} \frac{E_2(1,x)}{x^2}-\frac{\alpha^2}{4x^2}-A\frac{x+\alpha}{x} \geq \inf_{x\in\mathbb R_+} h(x) -A,
\end{equation}
with
\[
h(x) \coloneqq \displaystyle\frac{E_2(1,x)}{x^2}-\displaystyle\frac{1}{4x^2}-\displaystyle\frac{A}{x}.
\]
 Clearly, $h$ is continuous on $\mathbb R_+$. In addition, by \eqref{2.1.6} we have $h(x)\to -1$ as $x\to +\infty$.
Furthermore, $E_2(1,0)=\frac{\pi^2}{4}$ by \eqref{2.1.4}. Hence, $h(x)\sim \frac{\pi^2-1}{4x^2}\to+\infty$ as $x\to0$, and
we can conclude that $h$ admits a finite lower bound on $\mathbb R_+$. Then, by \eqref{min1}, there exists $C_1\in\mathbb R_+$ such that $\inf_{r\in\mathbb R_+} W(r,\alpha) \geq -C_1$
for all $\alpha\in(0,1)$.
\end{proof}
Introduce another sesquilinear form,
\[
a_2(u,u)=\int_{\mathbb R_+}
\bigg[
 \left(1-\alpha^2 \frac{r}{r+1}\right)\lvert u'(r)\rvert^2 + \left(\frac{E_1(1,r\alpha)}{(r\alpha)^2} -\frac{1}{4r^2}\right) \lvert u(r)\rvert^2
\bigg] dr, \quad u\in C^\infty_c(\mathbb R_+).
\]
\begin{lemma}\label{a2sbb}
There exists $C_2 \in \mathbb R_+$ such that for all $\alpha \in (0,1)$ and for all $u\in D(a_2)$,
\[
a_2(u,u)\geq (1-\alpha^2) \int_{\mathbb R_+} \bigg[ \lvert u'(r)\rvert^2 +\left(-\frac{1}{4r^2}-\frac{1}{r\alpha (1-\alpha^2)} \right)\lvert u(r)\rvert^2 \bigg]dr -C_2 \lVert u\rVert^2_{L^2(\mathbb R_+)}.
\]
\end{lemma}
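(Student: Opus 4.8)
The plan is to reduce the statement to a single weighted Hardy-type inequality on the half-line and to prove the latter by the classical ground-state substitution. Throughout I would work with $u\in C^\infty_c(\mathbb R_+)$, which is a form core, and extend to $D(a_2)$ at the end by continuity of both sides in the form norm.

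First I would rewrite the two $\alpha$-dependent coefficients of $a_2$. For the kinetic weight I use the elementary identity
\[
1-\alpha^2\frac{r}{r+1}=(1-\alpha^2)+\frac{\alpha^2}{r+1},
\]
which exhibits the expected factor $(1-\alpha^2)$ plus a nonnegative remainder. For the potential I invoke Proposition~\ref{Prop2.5}: with $\gamma=r\alpha$ it yields
\[
\frac{E_1(1,r\alpha)}{(r\alpha)^2}=-\frac{1}{r\alpha}+\phi(r\alpha),
\]
with $\phi\in L^\infty(\mathbb R_+)$. Subtracting the claimed right-hand side, whose potential equals $-(1-\alpha^2)\frac{1}{4r^2}-\frac{1}{r\alpha}$, the singular terms $1/(r\alpha)$ cancel \emph{exactly}, and using $\int_{\mathbb R_+}\phi(r\alpha)|u|^2\,dr\geq-\|\phi\|_\infty\|u\|^2$ the whole inequality reduces to
\[
\alpha^2\int_{\mathbb R_+}\frac{|u'(r)|^2}{r+1}\,dr-\alpha^2\int_{\mathbb R_+}\frac{|u(r)|^2}{4r^2}\,dr\geq -C\,\|u\|^2_{L^2(\mathbb R_+)}
\]
for some $C$ independent of $\alpha\in(0,1)$. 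Since $\alpha^2\le 1$, it suffices to prove the $\alpha$-free version obtained by dropping the prefactor $\alpha^2$.

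The heart of the matter is therefore the weighted Hardy inequality
\[
\int_{\mathbb R_+}\frac{|u'|^2}{r+1}\,dr\geq\int_{\mathbb R_+}\frac{|u|^2}{4r^2}\,dr-C\,\|u\|^2,\qquad u\in C^\infty_c(\mathbb R_+).
\]
I would prove it with the substitution $u=\sqrt r\,v$, $v\in C^\infty_c(\mathbb R_+)$, which turns the left integrand into $\frac{1}{r+1}\big(\frac{|v|^2}{4r}+r|v'|^2+\Re(v\bar v')\big)$ and the right one into $\frac{|v|^2}{4r}$. Integrating the cross term by parts, $\int_{\mathbb R_+}\frac{\Re(v\bar v')}{r+1}\,dr=\frac12\int_{\mathbb R_+}\frac{|v|^2}{(r+1)^2}\,dr$ (boundary terms vanish by compact support), gives the exact identity
\[
\int_{\mathbb R_+}\Big(\frac{|u'|^2}{r+1}-\frac{|u|^2}{4r^2}\Big)dr
=\int_{\mathbb R_+}\frac{r\,|v'|^2}{r+1}\,dr+\int_{\mathbb R_+}\frac{(1-r)\,|u|^2}{4r(r+1)^2}\,dr.
\]
The first term is nonnegative; in the second the factor $\frac{1-r}{4r(r+1)^2}$ is nonnegative on $(0,1)$ and bounded on $(1,\infty)$, so its negative part contributes at least $-C\|u\|^2$ with $C=\sup_{r>1}\frac{r-1}{4r(r+1)^2}<\infty$. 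This yields the inequality with a universal constant, and setting $C_2=C+\|\phi\|_\infty$ finishes the proof after the density step.

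The only genuinely delicate point is this last inequality. The weight $1/(r+1)$ degrades the kinetic energy precisely at the Hardy threshold, where the constant $\frac14$ is sharp, so one cannot naively dominate $\int\frac{|u|^2}{4r^2}$ by a fraction of $\int|u'|^2$ plus a multiple of $\|u\|^2$. The ground-state substitution is exactly what makes the cancellation transparent and isolates a manifestly controllable remainder supported, up to a bounded perturbation, away from the origin; this is the step I expect to require the most care.
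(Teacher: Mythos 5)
Your proof is correct, and it takes a genuinely different route from the paper's. Both arguments start the same way, using Proposition~\ref{Prop2.5} to write $E_1(1,r\alpha)/(r\alpha)^2=-1/(r\alpha)+\phi(r\alpha)$, so that the singular Coulomb terms cancel exactly against the right-hand side. From there the paper splits the half-line into $(0,1)$ and $(1,+\infty)$: on $(0,1)$ it invokes the improved Hardy inequality of Brezis--Marcus, \cite[Lemma A.1]{ref9} (displayed as \eqref{hardy}), to absorb the kinetic weight deficit, and on $(1,+\infty)$ it performs the elementary rearrangement \eqref{a2.2}, ending with $C_2=\|\phi\|_\infty+\alpha^2/4$. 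You instead reduce everything, via the identity $1-\alpha^2\frac{r}{r+1}=(1-\alpha^2)+\frac{\alpha^2}{r+1}$ and the correct observation that the prefactor $\alpha^2\le1$ may be dropped because the bracket is only ever bounded below by a nonpositive multiple of $\|u\|^2$, to the single weighted Hardy inequality $\int_{\mathbb R_+}\frac{|u'|^2}{r+1}\,dr\ge\int_{\mathbb R_+}\frac{|u|^2}{4r^2}\,dr-C\|u\|^2$, which you then prove self-containedly by the ground-state substitution $u=\sqrt r\,v$. I verified your exact identity: the potential contributions $-\frac{|v|^2}{4(r+1)}+\frac{|v|^2}{2(r+1)^2}$ combine to $\frac{(1-r)|v|^2}{4(r+1)^2}=\frac{(1-r)|u|^2}{4r(r+1)^2}$, whose coefficient is nonnegative on $(0,1]$ and whose negative part on $(1,\infty)$ is bounded (it vanishes at $r=1$ and decays like $r^{-2}$), so $C=\sup_{r>1}\frac{r-1}{4r(r+1)^2}<\infty$ and $C_2=C+\|\phi\|_\infty$ is uniform in $\alpha\in(0,1)$, as required. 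What your route buys is independence from the external reference \cite{ref9} and a fully explicit universal constant, with the interval splitting arising automatically from the sign of $1-r$ rather than being imposed; what the paper's route buys is brevity given the citation and intermediate estimates \eqref{a2.1}--\eqref{a2.2} in a form directly matching the target expression. One small remark: your closing density step is vacuous, since the paper defines $D(a_2)=C^\infty_c(\mathbb R_+)$, so there is nothing to extend.
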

\begin{proof}
Using the function $\phi$ from Proposition~\ref{Prop2.5} we can write, for all $u\in D(a_2)$,
\[
a_2(u,u)\geq \int_{\mathbb R_+} \bigg[(1-\alpha^2\frac{r}{r+1}) \lvert u'(r)\rvert^2 +\left(-\frac{1}{4r^2} -\frac{1}{r\alpha}\right) \lvert u(r)\rvert^2 \bigg]dr -\|\phi\|_\infty \lVert u\rVert ^2_{L^2(\mathbb R_+)}.
\]
We want to study the right hand side of this inequality. To do that we separate the integral in two parts: a first integral on $(0,1)$ and a second one on $(1,+\infty)$.

On one hand, we notice that
\[
\int_0^1 \frac{r}{r+1} \lvert u'(r)\rvert^2 dr \leq \int_0^1 \lvert u'(r)\rvert^2 r dr.
\]
And, by an improved Hardy type inequality from \cite[Lemma A.1]{ref9}, as $u \in C^\infty_c(\mathbb R_+)$,
\begin{equation}
   \label{hardy}
\int_0^1\left( \lvert u'(r)\rvert^2 -\frac{\lvert u(r)\rvert^2}{4r^2} \right) dr \geq \int_0^1 \left ( \lvert u'(r) \rvert^2 + \frac{\lvert u(r)\rvert^2}{4r^2} \right) r dr.
\end{equation}
Then we have
\[
\int_0^1 \lvert u'(r)\rvert^2 r dr \leq \int_0^1 \bigg[
\lvert u'(r)\rvert ^2 - \left(\frac {1}{4r^2}+ \frac{1}{4r}\right)\lvert u(r)\rvert^2 \bigg]dr,
\]
and we can write
\begin{multline*}
\int_0^1 \bigg[\left(1-\alpha^2\frac{r}{r+1}\right) \lvert u'(r)\rvert^2 + \left(-\frac{1}{4r^2}-\frac{1}{r\alpha}\right)\lvert u(r)\rvert^2 \bigg]dr \\
\geq (1-\alpha^2)\int_0^1 \lvert u'(r)\rvert^2 dr+ \alpha^2 \int_0^1 \left(\frac{1}{4r^2}+\frac{1}{4r}\right)\lvert u(r)\rvert^2 dr +\int_0^1 \left(-\frac{1}{4r^2}-\frac{1}{r\alpha} \right)\lvert u(r)\rvert^2 dr.
\end{multline*}
Finally,
\begin{multline}
\label{a2.1}
\int_0^1
\bigg[\left(1-\alpha^2\frac{r}{r+1}\right) \lvert u'(r)\rvert^2 + \left(-\frac{1}{4r^2}-\frac{1}{r\alpha}\right)\lvert u(r)\rvert^2
\bigg] dr \\
\geq (1-\alpha^2)\int_0^1 \bigg[
\lvert u'(r)\rvert^2 -\left(\frac{1}{4r^2}+\frac{1}{\alpha(1-\alpha^2)r}\right) \lvert u(r)\rvert^2
\bigg]dr.
\end{multline}
On the other side, the integrals on $(1,+\infty)$ can be estimated by
\begin{multline*}
\int_1^{+\infty} 
\bigg[
\left(1-\alpha^2\frac{r}{r+1} \right)\lvert u'(r)\rvert^2 -\left(\frac{1}{4r^2} +\frac{1}{r\alpha}\right) \lvert u(r)\rvert^2
\bigg] dr \\
\geq (1-\alpha^2) \int_1^{+\infty} 
\bigg[\lvert u'(r)\rvert ^2 -\left(\frac{1}{4r^2(1-\alpha^2)} +\frac{1}{r\alpha(1-\alpha^2)} \right) \lvert u(r)\rvert^2
\bigg] dr .
\end{multline*}
Using
\[
\frac{1}{4r^2(1-\alpha^2)}=\displaystyle \frac{1}{4r^2}+ \displaystyle \frac{\alpha^2}{4r^2(1-\alpha^2)}
\]
we obtain
\begin{multline}
\label{a2.2}
\int_1^{+\infty} \bigg[ \left(1-\alpha^2\frac{r}{r+1} \right)\lvert u'(r)\rvert^2 -\left(\frac{1}{4r^2} +\frac{1}{r\alpha}\right) \lvert u(r)\rvert^2 \bigg]dr \\
\geq (1-\alpha^2) \int_1^{+\infty} \bigg[\lvert u'(r)\rvert^2 -\left(\frac{1}{4r^2}+\frac{1}{r\alpha (1-\alpha^2)}\right ) \lvert u(r)\rvert^2 \bigg]dr -\frac{\alpha^2}{4} \lVert u \rVert ^2_{L^2(\mathbb R_+)}.
\end{multline}
Putting \eqref{a2.1} and \eqref{a2.2} together we obtain the result whith $C_2=\|\phi\|_\infty+\displaystyle\frac{\alpha^2}{4}$.
\end{proof}

\begin{proof}[Proof of Proposition~\ref{sbb2}]
Let $v\in \mathcal G$, then $f\in D(a_2)$ and $Pv\in D(a_1)$, and, by Proposition \ref{sbb1},
\[
q_\alpha(v,v)\geq a_2(f,f)+a_1(Pv,Pv)-\alpha A \lVert \Pi v \rVert^2_{L^2(V_\alpha)}.
\]
We can conclude thanks to Lemmas \ref{a1sbb} and \ref{a2sbb} and by writing $K=\max(A+C_2,C_1)$.
\end{proof}

\begin{proposition}\label{lowerbound}
There exist $\alpha_0>0$ and $M_0>0$ such that, for all $\alpha\in(0,\alpha_0)$ and for all $n\in\mathbb N$,
\[
\Lambda_n(T_\alpha)\geq \mathcal E_n(\alpha)-M_0.
\]
\end{proposition}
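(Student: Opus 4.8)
The plan is to feed the form lower bound of Proposition~\ref{sbb2} into the $\sup$-$\inf$ version of the min-max principle, with test functions built from the eigenfunctions of the model operator $H^\infty_a$ of subsection~\ref{op1D}. Recall that $\Lambda_n(T_\alpha)=\Lambda_n(Q_\alpha)$, and observe first that Proposition~\ref{sbb2} can be rewritten compactly: setting $b:=\alpha(1-\alpha^2)$, the integrand $(1-\alpha^2)\big[|f'|^2-(\tfrac{1}{4r^2}+\tfrac{1}{r\alpha(1-\alpha^2)})|f|^2\big]$ is exactly the integrand of $(1-\alpha^2)\,h_b(f,f)$, because $(1-\alpha^2)\tfrac{1}{rb}=\tfrac{1}{r\alpha}$. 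Hence, for all $v\in\mathcal G$ and $\alpha\in(0,1)$, one has $q_\alpha(v,v)\ge(1-\alpha^2)\,h_b(f,f)-K\lVert v\rVert^2_{L^2(V_\alpha)}$, where $f=f_v$ is given by \eqref{4.2.3.1} and $K$ is the constant of Proposition~\ref{sbb2}.

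First I would fix $n\in\mathbb N$ and let $g_1,\dots,g_{n-1}$ be the first $n-1$ real, normalized eigenfunctions of $H^\infty_b$. Since the $\mathcal E_j(b)=-1/[(2j-1)^2b^2]$ form an infinite sequence of negative eigenvalues lying below $\Specess H^\infty_b=[0,+\infty)$, the spectral theorem gives $h_b(f,f)\ge\mathcal E_n(b)\lVert f\rVert^2_{L^2(\mathbb R_+)}$ for every $f$ in the form domain of $H^\infty_b$ with $f\perp g_1,\dots,g_{n-1}$; note $f_v\in C^\infty_c(\mathbb R_+)$ lies in this form domain. As admissible families in the $\sup$-$\inf$ min-max I would take $\psi_j(r,\theta):=g_j(r)\,\widetilde\Phi(r,\theta)\in L^2(V_\alpha)$, which indeed belong to $\mathcal H$ since $\lVert\psi_j\rVert_{L^2(V_\alpha)}=\lVert g_j\rVert_{L^2(\mathbb R_+)}$ by the $\theta$-normalization of $\widetilde\Phi$. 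Because $\widetilde\Phi$ is real, a direct computation yields $\langle v,\psi_j\rangle_{L^2(V_\alpha)}=\langle f_v,g_j\rangle_{L^2(\mathbb R_+)}$, so that $v\perp\psi_j$ forces $f_v\perp g_j$.

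For $v\in\mathcal G$ with $v\perp\psi_1,\dots,\psi_{n-1}$ I can then chain the estimates: $h_b(f_v,f_v)\ge\mathcal E_n(b)\lVert f_v\rVert^2$, and since $\mathcal E_n(b)<0$ while $\lVert f_v\rVert=\lVert\Pi v\rVert\le\lVert v\rVert$, the coefficient being negative lets me replace $\lVert f_v\rVert^2$ by the larger $\lVert v\rVert^2$ without spoiling the lower bound. Combined with Proposition~\ref{sbb2} this gives $q_\alpha(v,v)\ge\big[(1-\alpha^2)\mathcal E_n(b)-K\big]\lVert v\rVert^2$. Taking the infimum over such $v$ and then the supremum over families in the $\sup$-$\inf$ min-max (this particular choice being admissible) yields $\Lambda_n(Q_\alpha)\ge(1-\alpha^2)\mathcal E_n(b)-K$.

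Finally I would unwind the constants. A short computation gives $(1-\alpha^2)\mathcal E_n\big(\alpha(1-\alpha^2)\big)=\mathcal E_n(\alpha)/(1-\alpha^2)=\mathcal E_n(\alpha)-1/[(2n-1)^2(1-\alpha^2)]$, so this quantity differs from $\mathcal E_n(\alpha)$ by at most $1/(1-\alpha^2)$, uniformly in $n$. Choosing $\alpha_0\in(0,1)$ so small that $1-\alpha^2\ge\tfrac12$ for $\alpha<\alpha_0$, I conclude $\Lambda_n(T_\alpha)\ge\mathcal E_n(\alpha)-M_0$ with $M_0:=K+2$, which is the assertion. The only genuinely delicate point is the transfer of orthogonality through the map $v\mapsto f_v$, namely the choice $\psi_j=g_j\widetilde\Phi$ together with the identity $\langle v,\psi_j\rangle=\langle f_v,g_j\rangle$; once this is in place, the rest is the sign bookkeeping and the elementary rescaling just described.
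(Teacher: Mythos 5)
Your proof is correct, and it rests on the same skeleton as the paper's: Proposition~\ref{sbb2} as the key input, comparison with the model operator $H^\infty_{\alpha(1-\alpha^2)}$, and the identical final rescaling $(1-\alpha^2)\,\mathcal E_n\big(\alpha(1-\alpha^2)\big)=\mathcal E_n(\alpha)-\tfrac{1}{(2n-1)^2(1-\alpha^2)}$. Where you genuinely diverge is in how the min-max comparison is executed. The paper builds the auxiliary diagonal form $h^\mathrm{diag}$ on $C^\infty_c(\mathbb R_+)\times\mathcal G$ (Lemma~\ref{diag}), maps $n$-dimensional subspaces of $\mathcal G$ into the product space via the isometry $\mathcal J\colon v\mapsto(f,Pv)$, and uses the $\inf$-$\sup$ characterization; Lemma~\ref{diag} then requires a separate argument to show that the vanishing of the form on the second component does not lower the Rayleigh quotients, which is precisely where the sign $\mathcal E_n<0$ is invoked (a negative quotient only increases when the denominator is enlarged). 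You bypass the auxiliary operator entirely: the $\sup$-$\inf$ version of the min-max with the explicit vectors $\psi_j=g_j\tilde\Phi$, the orthogonality transfer $\langle v,\psi_j\rangle_{L^2(V_\alpha)}=\langle f_v,g_j\rangle_{L^2(\mathbb R_+)}$ (valid since $\tilde\Phi(r,\cdot)$ is real and normalized in $L^2(-\alpha,\alpha)$), and the one-line observation $\mathcal E_n(b)\lVert f_v\rVert^2\ge\mathcal E_n(b)\lVert v\rVert^2$ use the same sign information but dispose of the $Pv$-component without any product-space construction. The admissibility details you would need to check are all in place: $\mathcal G$ is dense in the form domain of $Q_\alpha$ (subsection~\ref{polarcoord} via Lemma~\ref{density}), $f_v\in C^\infty_c(\mathbb R_+)$ is a form core element for the Friedrichs extension, and $g_j\tilde\Phi\in L^2(V_\alpha)$ with $\lVert g_j\tilde\Phi\rVert=\lVert g_j\rVert$. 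The net effect is a slightly more economical proof of the same bound, trading the reusable Lemma~\ref{diag} for a direct orthogonality argument.
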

The proof is again based on the use of the min-max principle, and we need a preliminary assertion.
\begin{lemma}\label{diag}
We define, for all $(g,\varphi)\in C^{\infty}_c(\mathbb R_+)\times\mathcal G$, the sesquilinear form
\[
h^\mathrm{diag}\Big((g,\varphi),(g,\varphi)\Big)=\int_0^{+\infty}
\Big[ \,\lvert g'(r)\rvert^2 -\left(\frac{1}{4r^2}+\frac{1}{r\alpha(1-\alpha^2)}\right) \lvert g(r)\rvert^2
\Big] dr,
\]
and let $H^\mathrm{diag}$ be the self-adjoint operator associated
with its closure in $L^2(\mathbb R_+)\times L^2(V_\alpha)$. Then
$\Lambda_n(H^\mathrm{diag})\ge \mathcal E_n\big (\alpha(1-\alpha^2)\big)$
 for all $n\in\mathbb N$.
\end{lemma}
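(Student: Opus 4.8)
The conceptual point is that $h^{\mathrm{diag}}$ does not depend on its second argument $\varphi$ at all: on the first factor it coincides, after setting $a=\alpha(1-\alpha^2)$, with the form $h_a$ of subsection~\ref{op1D}, since $\frac{1}{ar}=\frac{1}{r\alpha(1-\alpha^2)}$. Thus $H^{\mathrm{diag}}$ is, morally, the orthogonal sum $H^\infty_a\oplus 0_{L^2(V_\alpha)}$; as the zero block contributes no spectrum below $0$ while $\Specess H^\infty_a=[0,+\infty)$, the portion of the spectrum lying below $0$ is governed entirely by $H^\infty_a$, whose $n$th Rayleigh quotient equals $\mathcal E_n(a)$. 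Rather than setting up the direct-sum decomposition of the closed forms, I would extract the desired lower bound directly from the min-max principle, which is shorter and yields exactly the stated inequality.

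Concretely, I would compute $\Lambda_n(H^{\mathrm{diag}})$ using the dense subset $C^\infty_c(\mathbb R_+)\times\mathcal G$ of the form domain, so that
\[
\Lambda_n(H^{\mathrm{diag}})=\inf_{\substack{G\subset C^\infty_c(\mathbb R_+)\times\mathcal G\\ \dim G=n}}\ \sup_{\substack{(g,\varphi)\in G\\ (g,\varphi)\ne 0}}\frac{h_a(g,g)}{\lVert g\rVert^2_{L^2(\mathbb R_+)}+\lVert\varphi\rVert^2_{L^2(V_\alpha)}}.
\]
Fixing an $n$-dimensional $G$ and writing $\mu$ for the inner supremum, the goal is $\mu\ge\mathcal E_n(a)$. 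Since $\mathcal E_n(a)=-\frac{1}{(2n-1)^2a^2}<0$, there is nothing to prove when $\mu\ge 0$, so I would assume $\mu<0$. Then every nonzero $(g,\varphi)\in G$ satisfies $h_a(g,g)<0$, hence $g\ne 0$, so the first-component projection $\pi_1\colon(g,\varphi)\mapsto g$ is injective on $G$ and $\pi_1(G)$ is an $n$-dimensional subspace of $C^\infty_c(\mathbb R_+)$. Because $h_a(g,g)<0$ and $\lVert g\rVert^2\le\lVert g\rVert^2+\lVert\varphi\rVert^2$, enlarging the denominator can only raise the negative quotient,
\[
\frac{h_a(g,g)}{\lVert g\rVert^2_{L^2(\mathbb R_+)}}\le\frac{h_a(g,g)}{\lVert g\rVert^2_{L^2(\mathbb R_+)}+\lVert\varphi\rVert^2_{L^2(V_\alpha)}}\le\mu,
\]
whence $\sup_{g\in\pi_1(G)}h_a(g,g)/\lVert g\rVert^2_{L^2(\mathbb R_+)}\le\mu$. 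On the other hand, by the min-max characterization of $\mathcal E_n(a)=\Lambda_n(H^\infty_a)$ over the form core $C^\infty_c(\mathbb R_+)$ (subsection~\ref{op1D}), the left-hand side of that last inequality is at least $\mathcal E_n(a)$; therefore $\mu\ge\mathcal E_n(a)$. Taking the infimum over $G$ gives $\Lambda_n(H^{\mathrm{diag}})\ge\mathcal E_n\big(\alpha(1-\alpha^2)\big)$.

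The whole argument is a bookkeeping of signs, and the one place that genuinely needs care is precisely that bookkeeping: the termwise comparison of the two quotients is valid only because the numerator $h_a(g,g)$ is negative, so I must first dispose of the case $\mu\ge 0$ (immediate from $\mathcal E_n(a)<0$) before enlarging denominators and projecting onto the first factor. The only other ingredients to keep explicit are that $\mathcal E_n(a)$ is genuinely the $n$th Rayleigh quotient of the Friedrichs extension $H^\infty_a$ and that $C^\infty_c(\mathbb R_+)$ is a form core for it, both recalled in subsection~\ref{op1D}; these are what make $\pi_1(G)$ an admissible trial space for $\mathcal E_n(a)$. I note that the direct-sum viewpoint in fact gives equality, but only the inequality is needed in what follows.
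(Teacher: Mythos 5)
Your proof is correct and follows the paper's own argument in all essentials: both reduce matters to the min-max principle for $H^\infty_{\alpha(1-\alpha^2)}$ over the form core $C^\infty_c(\mathbb R_+)$ by projecting trial subspaces onto the first factor (injective because the form is strictly negative there) and exploiting that a negative Rayleigh quotient only increases when the denominator is enlarged. The sole difference is organizational: the paper first establishes $\Lambda_n(H^{\mathrm{diag}})\le\mathcal E_n\big(\alpha(1-\alpha^2)\big)<0$ in order to restrict the infimum to subspaces on which $h^{\mathrm{diag}}$ is negative, whereas you handle each trial subspace via the case split $\mu\ge 0$ versus $\mu<0$ --- a harmless rearrangement of the same idea, which in both versions actually yields equality.
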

\begin{proof}
Let $n\in \mathbb N$. By the min-max principle,
\[
\Lambda_n(H^\mathrm{diag})=\inf_{\substack{G\subset C^{\infty}_c(\mathbb R_+)\times \mathcal G \\ \dim(G)=n}} \sup_{\substack{(g,\varphi)\in G\\ (g,\varphi)\neq (0,0)}} \frac{h^\mathrm{diag}\left((g,\varphi),(g,\varphi)\right)}{\lVert g\rVert^2_{L^2(\mathbb R_+)}+\lVert \varphi \rVert^2_{L^2(\mathbb R^2)}}.\]
Moreover, thanks again to the min-max principle, we also have:
\begin{multline*}
\mathcal E_n\big(\alpha(1-\alpha^2)\big)=\inf_{\substack{G\subset C^\infty_c(\mathbb R_+) \\ \dim(G)=n}} \sup_{\substack{g\in G\\g\neq 0}}\frac{\langle g, H^\infty_{\alpha(1-\alpha^2)}g\rangle}{\lVert g\rVert^2_{L^2(\mathbb R_+)}} \\
 = \inf_{\substack{G\subset C^{\infty}_c(\mathbb R_+)\times \{0\} \\ \dim(G)=n}} \sup_{\substack{(g,0)\in G\\ g\neq 0}} \frac{h^\mathrm{diag}\left((g,0),(g,0)\right)}{\lVert g\rVert^2_{L^2(\mathbb R_+)}} \geq \Lambda_n(H^\mathrm{diag}),
\end{multline*}
where $H^\infty_{\alpha(1-\alpha^2)}$ is the Friedrichs extension of $H_{\alpha(1-\alpha^2)}$ defined in Appendix~\ref{appb}.
As $\mathcal E_n\big(\alpha(1-\alpha^2)\big)<0$ for all $n\in\mathbb N$, one has $\Lambda_n(H^\mathrm{diag}) <0$,
and
\begin{align*}
\Lambda_n(H^\mathrm{diag})&=\inf_{\substack{G\subset C^{\infty}_c(\mathbb R_+)\times \mathcal G \\ \dim(G)=n \\ h^\mathrm{diag}_{|G}<0}} \sup_{\substack{(g,\varphi)\in G\\ (g,\varphi)\neq (0,0)}} \frac{h^\mathrm{diag}\left((g,\varphi),(g,\varphi)\right)}{\lVert g\rVert^2_{L^2(\mathbb R_+)}+\lVert \varphi \rVert^2_{L^2(\mathbb R^2)}}\\
&\geq \inf_{\substack{G\subset C^{\infty}_c(\mathbb R_+)\times \mathcal G \\ \dim(G)=n \\ h^\mathrm{diag}_{|G}<0}} \sup_{\substack{(g,\varphi)\in G\\ (g,\varphi)\neq (0,0)}} \frac{h^\mathrm{diag}\left((g,0),(g,0)\right)}{\lVert g\rVert^2_{L^2(\mathbb R_+)}}.
\end{align*}
On the other hand,

\begin{align*}
\inf_{\substack{G\subset C^{\infty}_c(\mathbb R_+)\times \mathcal G \\ \dim(G)=n \\ h^\mathrm{diag}_{|G}<0}} \sup_{\substack{(g,\varphi)\in G\\ (g,\varphi)\neq (0,0)}} \frac{h^\mathrm{diag}\left((g,0),(g,0)\right)}{\lVert g\rVert^2_{L^2(\mathbb R_+)}}
= \inf_{\substack{G\subset C^{\infty}_c(\mathbb R_+) \\ \dim(G)=n \\ h^\mathrm{diag}_{|G}<0}} \sup_{\substack{g\in G\\ g\neq 0}} \frac{\langle g, H^\infty_{\alpha(1-\alpha^2)}g\rangle}{\lVert g\rVert^2_{L^2(\mathbb R_+)}}
=\mathcal E_n\left(\alpha(1-\alpha^2)\right),
\end{align*}
namely $\Lambda_n(H^\mathrm{diag})=\mathcal E_n\big(\alpha(1-\alpha^2)\big)$.
\end{proof}
We are now able to give the proof of the lower bound of the eigenvalues.
\begin{proof}[Proof of Proposition~\ref{lowerbound}]
Let $n\in\mathbb N$. By Proposition \ref{sbb2} and the min-max principle we have
\[
\Lambda_n(T_\alpha) \geq (1-\alpha^2) \inf_{\substack{G \subset \mathcal G \\ \dim(G)=n}} \sup_{\substack{v \in G \\ v \neq 0}} \frac{h^\mathrm{diag}\left((f, Pv),(f, Pv) \right)}{\lVert f\rVert ^2_{L^2(\mathbb R^+)}+ \lVert Pv\rVert^2_{L^2(U_\alpha)}}- K.
\]
Consider the map $\mathcal J:\,   \mathcal G\ni v\mapsto (f,Pv)\in C^{\infty}_c(\mathbb R_+)\times \mathcal G$,
where $f$ and $Pv$ are defined in \eqref{4.2.3.1}. Then,
\begin{align*}
\Lambda_n(T_\alpha) &\geq (1-\alpha^2) \inf_{\substack{G'\subset \mathcal J(\mathcal G) \\ \dim (G')=n }} \sup_{\substack{(g,\varphi)\in G' \\ (g,\varphi)\neq (0,0)}} \frac{h^\mathrm{diag}\left((g,\varphi),(g,\varphi)\right)}{\lVert g \rVert^2_{L^2(\mathbb R_+)}+ \lVert \varphi \rVert^2_{L^2(V_\alpha)}} -K \\
&\geq (1-\alpha^2) \inf_{\substack{G'\subset C^{\infty}_c(\mathbb R_+)\times \mathcal G\\ \dim (G')=n }} \sup_{\substack{(g,\varphi)\in G' \\ (g,\varphi)\neq (0,0)}} \frac{h^\mathrm{diag}\left((g,\varphi),(g,\varphi)\right)}{\lVert g \rVert^2_{L^2(\mathbb R_+)}+ \lVert \varphi \rVert^2_{L^2(V_\alpha)}} -K \\
&\geq (1-\alpha^2) \mathcal E_n\left(\alpha(1-\alpha^2)\right) -K,
\end{align*}
thanks to Lemma \ref{diag}. As
\[
(1-\alpha^2)\mathcal  E_n\big(\alpha(1-\alpha^2)\big)=-\displaystyle\frac{1}{\alpha^2(2n-1)^2}-\displaystyle\frac{1}{(2n-1)^2(1-\alpha^2)},
\]
we can estimate
\[
(1-\alpha^2)\mathcal E_n\big(\alpha(1-\alpha^2)\big)\geq \mathcal E_n(\alpha)-\frac{1}{1-\alpha^2}-K. \qedhere
\]
\end{proof}

Now we are able to finish the proof of Theorem \ref{Thprinc}.
Thanks to proposition \ref{upperbound} and \ref{lowerbound}, there exist
$\alpha_0\in(0,1)$, $M\in \mathbb R$ and $m\in \mathbb R$ such that, for all $\alpha\in(0,\alpha_0)$ and for all $n\in\mathbb N$
one has $\mathcal E_n(\alpha)+m\leq \Lambda_n(T_\alpha)\leq \mathcal E_n(\alpha)+M$.
Taking $\mathcal C=\max(M,\lvert m\rvert)$ we arrive at the result.

\subsection{Complete asymptotic expansion for eigenvalues}

Theorem \ref{Thprinc} and Corollary \ref{corol43} give a first order asymptotics
for the eigenvalues. In particular, it follows that each discrete
eigenvalue is simple as the angle is small. This can be used
to apply the standard perturbation theory to obtain
a full asymptotic expansion.

\begin{theorem}\label{full}
For any $n\in \mathbb N$ there exist $\lambda_{j,n}\in \mathbb R$, $j\in\mathbb N\mathop{\cup}\{0\}$,
 such that for any $N\in\mathbb N$ one has the asymptotics
\[
E_n(T_\alpha)=\dfrac{1}{\alpha^2}
\sum_{j=0}^N \lambda_{j,n} \alpha ^{2j}+ O(\alpha^{2N}) \text{ as }
\alpha\to 0,
\]
and $\lambda_{0,n}=-\dfrac{1}{(2n-1)^2}$.
\end{theorem}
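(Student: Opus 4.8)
The plan is to turn the Born--Oppenheimer reduction of the previous subsection into a quantitative statement valid to all orders, and then to apply perturbation theory to a one-dimensional effective operator. Throughout I work with $Q_\alpha=\mathcal U T_\alpha\mathcal U^*$ on $L^2(V_\alpha)$, $V_\alpha=(0,+\infty)\times(-\alpha,\alpha)$, and the form $q_\alpha$ of Subsection~\ref{polarcoord}. Since $E_n(T_\alpha)\sim-1/((2n-1)^2\alpha^2)$, the first step is a double rescaling $r=\alpha\rho$, $\theta=\alpha\omega$ together with a multiplication by $\alpha^2$: writing $w(\rho,\omega)=\alpha\,v(\alpha\rho,\alpha\omega)$, which preserves the $L^2$ norm, a direct computation turns $\alpha^2 q_\alpha$ into
\[
\int_0^\infty\!\!\int_{-1}^1\Big(|\partial_\rho w|^2-\frac{|w|^2}{4\rho^2}+\frac{|\partial_\omega w|^2}{\alpha^2\rho^2}\Big)\,d\omega\,d\rho-\int_0^\infty\frac{|w(\rho,1)|^2+|w(\rho,-1)|^2}{\rho}\,d\rho .
\]
Now the angular interval is fixed, the target eigenvalue sits at the $O(1)$ level $-1/(2n-1)^2$, and the transverse operator at fixed $\rho$ is $\frac{1}{\alpha^2\rho^2}B_{1,\alpha^2\rho}$ in the notation of Subsection~\ref{interval}, with ground-state energy $\frac{1}{\alpha^2\rho^2}E_1(1,\alpha^2\rho)=-\frac1\rho+\alpha^2\phi(\alpha^2\rho)$ by Proposition~\ref{Prop2.5}.

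The crucial structural point is that after this rescaling the natural variable is $s=\alpha r=\alpha^2\rho$, and \emph{all} $\alpha$-dependent ingredients become smooth (indeed real-analytic, as $\gamma\mapsto E_1(1,\gamma)$ is) functions of $s$. The transverse profile becomes $\alpha^{-1/2}\Psi(\alpha^2\rho,\omega)$ with $\Psi(s,\omega)=c(s)\cosh\big(m(s)\omega\big)$, where $c$ and $\cosh(m(s)\omega)$ depend on $s$ only through $m(s)^2=-E_1(1,s)$; the Born--Oppenheimer correction becomes $K_\alpha(\alpha\rho)=\alpha^2\mathcal K(\alpha^2\rho)$ with $\mathcal K$ smooth; and the effective potential is $-\frac1\rho+\alpha^2\phi(\alpha^2\rho)$. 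Consequently the operator at $\alpha=0$ is \emph{exactly} the model operator $H_1$ of Subsection~\ref{op1D}, whose eigenvalue $\mathcal E_n(1)=-1/(2n-1)^2$ is simple and isolated, and the whole family is governed by the parameter $\beta:=\alpha^2$ through these smooth coefficients.

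Next I would decompose with the projections $\Pi,P$ of \eqref{4.2.3.1} and perform a Feshbach--Schur (Grushin) reduction to the $\Pi$-sector $\cong L^2(\mathbb R_+)$. The diagonal block $\Pi\mathcal A\Pi$ is $H_1+O(\beta)$ by (the rescaled) Proposition~\ref{Prop4.5}; the coupling $\Pi\mathcal A P$ is $O(\beta)$, since it arises only from $\partial_\rho$ hitting the $s$-dependence of $\Psi$, exactly as in the cross-term computation \eqref{croise3}; and, as in Propositions~\ref{sbb1}--\ref{sbb2} and Lemma~\ref{a1sbb}, the block $P\mathcal A P$ has bottom $\approx\alpha^2\,E_2(1,\alpha^2\rho)/(\alpha^2\rho)^2$, which by \eqref{2.1.6} stays above $-\beta+o(\beta)$ and is therefore separated from the target by a positive $O(1)$ gap, so $(P\mathcal A P-\lambda)^{-1}$ is bounded and smooth in $(\lambda,\beta)$ near $-1/(2n-1)^2$. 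Thus the Schur complement is $\mathcal A^{\mathrm{eff}}(\lambda,\beta)=H_1+\beta R(\lambda,\beta)$ with $R$ bounded and smooth in $\beta$, and the coupling contributes only at order $\beta^2$.

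Finally, since $-1/(2n-1)^2$ is a simple isolated eigenvalue of $H_1$ and, for small $\alpha$, $E_n(T_\alpha)$ is itself simple and isolated by Theorem~\ref{Thprinc}, I would run the Rayleigh--Schr\"odinger scheme: expanding the smooth coefficients $\phi(\beta\rho)$ and $\mathcal K(\beta\rho)$ in powers of $\beta$ (with coefficients polynomial in $\rho$, which are harmless because $\psi_n$ decays exponentially), one constructs order by order, for every $N$, a quasimode $w^{(N)}=\sum_{j\le N}\beta^j(f_j\Psi+P\text{-corrections})$ with residual $O(\beta^{N+1})$, the solvability at each step being the Fredholm condition against $\psi_n$ and the $P$-corrections being fixed by the invertibility of $P\mathcal A P-\lambda$. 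The $O(1)$ spectral gap then upgrades the quasimode into the genuine expansion $\lambda_n(\beta)=\sum_{j=0}^N\lambda_{j,n}\beta^j+O(\beta^{N+1})$ with $\lambda_{0,n}=\mathcal E_n(1)=-1/(2n-1)^2$; undoing the rescaling gives $E_n(T_\alpha)=\alpha^{-2}\lambda_n(\alpha^2)$, which is the asserted asymptotics. The main obstacle is that the Born--Oppenheimer limit is \emph{singular}, the transverse stiffness $1/(\alpha^2\rho^2)$ blowing up, so one cannot apply regular perturbation theory to the two-dimensional family directly; all the real work lies in the Feshbach reduction, i.e. in verifying that the coupling is $O(\beta)$ and the complementary resolvent is uniformly controlled, which is precisely the to-all-orders sharpening of the two-sided bounds already obtained in this section.
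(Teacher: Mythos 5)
Your overall architecture coincides in substance with the paper's proof: the same scaling $r=\alpha t$, $\theta=\alpha\eta$ reducing $Q_\alpha$ to $\alpha^{-2}L_\alpha$ on the fixed strip $V_1$ (your displayed form is exactly the paper's $\ell_\alpha$); the identification of the $\alpha=0$ problem with the model operator $H^\infty_1$; an order-by-order construction in $\beta=\alpha^2$ in which $\lambda_k$ is fixed by the Fredholm condition against the exponentially decaying $\psi_n$ and the corrections are polynomial in the transverse variable; and the final identification of the spectral point near $E_N$ as $E_n(L_\alpha)$ via Theorem~\ref{Thprinc}. Your structural observation that the transverse profile and all coefficients are analytic in $s=\alpha^2 t$ because everything depends on $m(s)$ only through $m(s)^2=-E_1(1,s)$ is correct and is the clean way to see what Lemma~\ref{majorK} and Proposition~\ref{Prop2.5} extract by hand. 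The paper implements all of this as a direct formal power series rather than through your Feshbach--Schur dressing.

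There are, however, two concrete gaps. First, the Feshbach step as you state it would fail: the Schur complement is not $H_1+\beta R(\lambda,\beta)$ with $R$ \emph{smooth in $\beta$} uniformly, because $\partial_\beta$ applied to coefficients such as $\phi(\beta t)$ or $\mathcal K(\beta t)$ produces factors of $t$, which are unbounded multiplication operators on $L^2(\mathbb R_+)$; the family is not a regular perturbation family in any operator norm, and your own remedy (polynomially growing Taylor coefficients tamed by the exponential decay of $\psi_n$) is precisely the order-by-order quasimode construction of the paper --- so the Grushin machinery buys nothing beyond it and the claimed smoothness of $(P\mathcal A P-\lambda)^{-1}$ in $\beta$ is unjustified. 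Second, and more seriously, your validation step ("the $O(1)$ spectral gap upgrades the quasimode") is missing a necessary ingredient: any truncation of the series violates the $\beta$-dependent Robin condition $\pm\partial_\eta\varphi(t,\pm1)=\alpha^2 t\,\varphi(t,\pm1)$ by an $O(\alpha^{2N+2})$ boundary term, so the quasimode does \emph{not} lie in the operator domain of $L_\alpha$ and the standard $L^2$ residual estimate is unavailable. The paper resolves this by measuring the residual in the form sense: the trace inequality \eqref{eqtr} controls the boundary defect relative to the form norm, and the weak quasimode lemma of Appendix~\ref{appd} (Proposition~\ref{propd1}, applied to the shifted positive operator $L_\alpha+b$) converts the form-level estimate \eqref{eq-ma02} into $\dist\big(\Spec L_\alpha,E_N\big)=O(\alpha^{2N-1})$, which suffices because $N$ is arbitrary. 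Without a substitute for this form-level argument (or an exact-domain correction of the quasimode), your plan does not close.
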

\begin{proof}
Let us consider the operator $Q_\alpha$ acting on $L^2(V_\alpha)$ and defined in Section~\ref{polarcoord}:
\[
Q_\alpha v =-\partial^2_{r} v -\frac{1}{4r^2} -\frac{1}{r^2} \partial^2_{\theta} v, \quad
\pm \frac{1}{r} \partial_\theta v(r,\pm \alpha) = v(r,\pm \alpha).
\]
Using the scaling, $\theta=\alpha\eta$, $r=\alpha t$, one shows that $Q_\alpha$
is unitarily equivalent to $\alpha^{-2}L_\alpha$,
where $L_\alpha$ acts in $L^2(V_1)$ by
\[
L_\alpha v\coloneqq -\partial^2_{t} v-\frac{v}{4t^2} -\frac{1}{\alpha^2t^2} \partial^2_{\eta} v,\quad
\pm \partial_\eta v(t,\pm 1) = \alpha^2 t v(t,\pm 1),
\]
and is associated with the sesquilinear form
\[
\ell_\alpha(u,v)=\int_{V_1} \Big(
\overline{u_t} v_t -\dfrac{\overline{u} v}{4t^2}
+\dfrac{\overline{u_\eta} v_\eta}{\alpha^2 t^2} \, \Big)dt\,d\eta
-\int_{\mathbb R_+} \dfrac{\overline{u(t,1)} v(t,1)+\overline{u(t,-1)}v(t,-1)}{t}\, dt.
\]
For the eigenvalues one has $E_n(T_\alpha)=\alpha^{-2} E_n(L_\alpha)$,
$n\in\mathbb N$, and we prefer to work with $L_\alpha$ in what follows.
Remark that, in view of Theorem~\ref{SPEC}, for any $\gamma>0$ one has
\begin{multline*}
\int_{\mathbb R_+} \dfrac{\big|v(t,1)\big|^2+\big|v(t,-1)\big|^2}{t}\, dt\\
\le
\dfrac{1}{\gamma}
\int_{V_1}
\Big( |v_t|^2 -\dfrac{|v|^2}{4t^2}
+\dfrac{|v_\eta|^2}{\alpha^2 t^2} \Big)\, dt\,d\eta
+\dfrac{\gamma \alpha^2}{\sin^2\alpha}
\int_{V_1} |v|^2 dt\,d\eta,
\quad v\in D(\ell_\alpha).
\end{multline*}
In particular, there exists $b_0>0$ and $b>0$ such that
for small $\alpha$ there holds
\begin{equation}
      \label{eqtr}
\int_{\mathbb R_+} \dfrac{\big|v(t,1)\big|^2+\big|v(t,-1)\big|^2}{t}\, dt
\le b_0\Big( \ell_\alpha(v,v)+ b \|v\|^2_{L^2(V_1)}\Big),
\quad v\in D(\ell_\alpha).
\end{equation}
Introduce the following differential expressions
\[
\mathcal L_{-1} \coloneqq -\displaystyle \frac{1}{t^2} \partial ^2_{\eta},
\quad
\mathcal L_0\coloneqq -\partial^2_{t} -\displaystyle \frac{1}{4t^2},
\]
then $L_\alpha u= \mathcal L_0 u + \alpha^{-2} \mathcal L_{-1} u$.
We look for a formal approximate eigenpair $(E_\alpha, \varphi_\alpha)$ of $L_\alpha$
of the form
\[
 E_\alpha \underset{\alpha\to 0}{\sim} \sum_{j\geq 0} \lambda_j \alpha^{2j}, \quad
\varphi_\alpha \underset{\alpha\to 0}{\sim} \sum_{j\geq 0} u_j \alpha^{2j},
\]
satisfying in the sense of formal series the following eigenvalue problem
\[
L_\alpha \varphi_\alpha \underset{\alpha\to 0}{\sim} E_\alpha \varphi_\alpha, \quad
\pm \partial_{\eta} \varphi_\alpha(t,\pm 1) \underset{\alpha\to 0}{\sim} \alpha^2 t \varphi_\alpha(t,\pm 1).
\]
By collecting the terms according to the powers of $\alpha$, one arrives at
an infinite system of partial differential equations.

To determine $(\lambda_0, u_0)$ we collect the terms containing $\alpha^{-2}$, then we have to solve
\begin{align*}
\mathcal L_{-1} u_0=0, \quad
\pm \partial_\eta u_0(t,\pm 1) =0.
\end{align*}
As a consequence, $u_0$ only depends on $t$. Collecting the terms corresponding to $\alpha^0$ we get 
\begin{align}
\label{as1}
\mathcal L_0 u_0+\mathcal L_{-1} u_1 &= \lambda_0 u_0, \\
\label{as2}
\pm \partial_\eta u_1(t,\pm 1) &= t u_0(t,\pm 1).
\end{align}
Let us denote by $\mathcal L_{-1}^N$ the operator acting as $\mathcal L_{-1}$ on $L^2(V_1)$ whose domain is
\[
D(\mathcal L_{-1}^N)=\{u\in L^2 (V_1),\quad \mathcal L_{-1}^N u \in L^2(V_1),\quad \pm \partial_{\eta}u(t,\pm 1)=0\}.
\] We already know that $u_0\in\Ker(\mathcal L^N_{-1})$ and we notice that the orthogonal projections on $\Ker(\mathcal L_{-1}^N)$ and the differential expression
$\mathcal L_0$ commute. Thus we can integrate \eqref{as1} on $(-1,1)$ and obtain 
\begin{align}
\label{as3}
-2\mathcal L_0u_0-\frac{1}{t^2} \int_{-1}^1 \partial^2_{\eta} u_1 d\eta= 2 \lambda_0 u_0.
\end{align}
Using the boundary condition \eqref{as2}, the equality \eqref{as3} becomes
\[
-\partial^2_{t} u_0-\frac{1}{4t^2} u_0-\frac{1}{t} u_0=\lambda_0 u_0.
\]
Here, we recognize a one-dimensional differential operator $H^\infty_1$ defined in section~\ref{op1D}.
Hence, we are lead to choose
\[
\lambda_{0}= -\frac{1}{(2n-1)^2},
\]
where $n\in\mathbb N$ is fixed for the rest of the proof, and $u_{0}$ is the associated normalized eigenfunction, see Appendix~\ref{appb}.
We then can get an expression for $u_1$ rewritting \eqref{as1} as
\[
\mathcal L_{-1} u_1= -\frac{1}{t} u_0.
\]
Integrating it two times in $\eta$ and using the boundary condition \eqref{as2} we obtain 
\[
u_{1}(t,\eta)=\frac{u_{0}(t)}{2} t \eta^2 + C_{1} (t),
\]
where $C_{1}$ has to be determined in the next step. Notice that the function
$t\mapsto t u_{0}(t)$ belongs to $L^2(\mathbb R_+)$ as $u_{0}$ decays exponentially, 
see Appendix~\ref{appb}.

We now can give the proof of the existence of the further terms. Let $k\in\mathbb N$ and suppose that $(\lambda_{1},...,\lambda_{k-1})$ and $(u_{1},...,u_{k-1})$ are known and satisfy 
\[
u_{l}=\sum_{i=1}^l f^i_l(u_0(t),t)\eta^{2i}+ C_{l}(t)\in L^2(V_1), \quad l=1,...,k-1,
\quad C_{l}\in\Ker\big(\mathcal L_0-\displaystyle \frac{1}{t}-\lambda_0\big)^{\perp},
\]
and the functions $t\mapsto C_{l}(t)$ and $t\mapsto f_l^i(u_0(t),t)$ decay exponentially, for all $i\le l$ and $l\le k-1$.
We want to determine $(\lambda_{k},u_{k})$. We first use the equation obtained by collecting the terms in $\alpha^{2k-2}$, i.e.
\begin{align}
\label{as4}
\mathcal L_{-1} u_{k}+\mathcal L_0u_{k-1}&=\sum_{i+j=k-1} \lambda_{j} u_{i}, \\
\label{as5}
\pm\partial_\eta u_{k}(t,\pm 1)&=tu_{k-1}(t,\pm 1).
\end{align}
Using \eqref{as4} and the hypotheses we have
\begin{multline*}
\mathcal L_{-1} u_k= \sum_{\substack{i+j=k-1 \\ i\neq 0}} \lambda_j \left( \sum_{m=1}^if_i^m(u_0(t),t)\eta ^{2m} + C_i(t) \right) +\lambda_{k-1} u_0(t) \\
-\sum_{m=1}^{k-1} \left (\mathcal L_0 f_{k-1}^m(u_0(t),t) \right)\eta^{2m} -\mathcal L_0C_{k-1}(t).
\end{multline*}
We integrate it two times in $\eta$ and we use the boundary condition \eqref{as5} to cancel the term corresponding to $\eta$, then there exists $t\mapsto \tilde C_k(t)$ such that
\begin{multline}
\label{as6}
u_k(t)=-t^2\Bigg(\sum_{\substack{i+j=k-1 \\ i\neq 0}} \lambda_j \Big( \sum_{m=1}^i f_i^m(u_0(t),t)\Big) \frac{\eta^{2m+2}}{(2m+1)(2m+2)}\\
-\sum_{m=1}^{k-1}\Big(\mathcal L_0 f_{k-1}^m(u_0(t),t)\Big) \frac{\eta^{2m+2}}{(2m+1)(2m+2)} \\
+\Big(\sum_{\substack{i+j=k-1 \\ i\neq 0}} \lambda_j C_i(t) -\mathcal L_0 C_{k-1}(t) +\lambda_{k-1} u_0(t) \Big) \frac{\eta^2}{2} 
+\tilde C_{k}(t)\Bigg).
\end{multline}
We set
\begin{align*}
f_k^m(x,t)&\coloneqq -\frac{t^2}{2m(2m-1)} \left (\sum _{i=m-1}^{k-1} \lambda_{k-1-i}  f^{m-1}_i(x,t) -\mathcal L_0f_{k-1}^{m-1}(x,t)\right),\quad m=2,...,k, \\
f_k^1(x,t)&\coloneqq -\frac{t^2}{2} \left(\sum_{\substack{i+j=k-1\\i \neq 0}} \lambda_jC_i(t) -\mathcal L_0C_{k-1}(t)+\lambda_{k-1} x\right), \\
C_k(t)&\coloneqq -t^2 \tilde C_k(t).
\end{align*}
Notice that each $t\mapsto f^m_k(u_0(t),t)$, $m=1,...k$, is then in $L^2(\mathbb R_+)$ and decay exponentially due to the hypothesis on $\big(f_i^m(u_0(\cdot),\cdot)\big)_{m=1}^i$, $i=1,...,k-1$, and $\big(C_i(\cdot)\big)_{i=1}^{k-1}$. Then \eqref{as6} can be written in the form 
\[
u_k(t,\eta)= \sum_{m=1}^kf_k^m (u_0(t),t) \eta^{2m} +C_k(t).
\]
We have now to determine $\lambda_k$ and $C_k$. Let us consider the equation obtained after collecting the terms in $\alpha^{2k}$:
\begin{align}
\label{as7}
\mathcal L_{-1} u_{k+1} +\mathcal L_0 u_k&=\sum_{i+j=k} \lambda_j u_i, \\
\label{as8}
\pm \partial_\eta u_{k+1} (t,\pm 1) &= t u_k(t,\pm 1).
\end{align}
The integration of \eqref{as7} on $(-1,1)$ with respect to $\eta$ and the boundary conditions \eqref{as8} give,
\begin{multline}
\label{as9}
\left(\mathcal L_0-\frac{1}{t}-\lambda_0\right) C_k(t)= \frac{1}{t} \sum_{m=1}^k f_k^m(u_0(t),t)-\sum_{m=1}^k \frac{1}{2m+1} \mathcal L_0 f_k^m(u_0(t),t) \\
+ \sum_{\substack{i+j=k\\ i\neq 0, j\neq 0}} \lambda_j\left(\sum_{l=1}^i \frac{1}{2l+1} f_i^l(u_0(t),t) + C_i(t) \right) + \lambda_k u_0(t).
\end{multline}
This equation admits a solution $C_k \in \Ker(H^\infty_1-\lambda_0)^\perp$ iff the right hand side belongs to $\Ker (H^\infty_1-\lambda_0)^\perp$. Thus, $\lambda_k$ is uniquely determined by 
\begin{multline*}
\lambda_k=\bigg\langle -\frac{1}{t} \sum_{m=1}^k f_k^m(u_0(\cdot),\cdot)+\sum_{m=1}^k \frac{1}{2m+1} \mathcal L_0 f_k^m(u_0(\cdot),\cdot)\\
- \sum_{\substack{i+j=k\\ i\neq 0, j\neq 0}} \lambda_j\left(\sum_{l=1}^i \frac{1}{2l+1} f_i^l(u_0(\cdot),\cdot) + C_i(\cdot) \right),u_0\bigg\rangle_{L^2(\mathbb R_+)}.
\end{multline*}
As $C_k$ satisfies the inhomogeneous equation~\eqref{as9}, a standard application
of the variation of constants shows that
it is exponentially decaying, which concludes the construction of the formal asymptotics.

Now we are going to show that the above formal expression for $E_\alpha$ provides an asymptotics
for the eigenvalues of $L_\alpha$. Now let us fix $N\in \mathbb N$ and consider the finite sums
\[
E_N=\sum_{j=0}^N \lambda_j \alpha^{2j},
\quad
\varphi_N=\sum_{j=0}^N u_j \alpha^{2j}.
\]
By the preceding constructions one has
\begin{gather*}
\Big( \mathcal L_0 +\alpha^{-2} \mathcal L_{-1}\Big)\varphi_N=E_N\varphi_N +\alpha^{2N} \psi_N,
\quad
\psi_N:=\mathcal L_0 u_N-\sum_{k=N}^{2N}\Big(\sum_{i+j=k} \lambda_i u_j\Big) \alpha^{2(k-N)},\\
\pm \partial_\eta \varphi_N (t,\pm 1)= \alpha^2 t \varphi_N(t,\pm 1) - \alpha^{2N+2} t u_N(t,\pm 1).
\end{gather*}
Remark that $\varphi_N$ does not belong to the domain of $L_\alpha$
as it does not satisfy the boundary condition, but belongs to the form domain of $L_\alpha$,
and for any $v\in D(\ell_\alpha)$ one has, using the integration by parts,
\begin{multline}
     \label{eq-ma01}
\ell_\alpha(\varphi_N,v)-E_N\int_{V_1} \overline{\varphi_N} v\, dt\,d\eta\\
=\alpha^{2N}\bigg(\int_{V_1} \overline{\psi_N} v\, \,dt\,d\eta
-\int_{\mathbb R_+} \dfrac{\overline{u_N(t,1)}v(t,1)+\overline{u_N(t,-1)}v(t,-1)}{t}\, dt
\bigg).
\end{multline}

Recall that $\inf\Spec L_\alpha=\alpha^2\inf\Spec Q_\alpha=-\alpha^2(\sin^2\alpha)^{-1}$.
Furthermore, without loss of generality we may assume that the constant $b$ in \eqref{eqtr}
is such that $L'_\alpha:=L_\alpha+b\ge 1$ for small $\alpha$ and consider
the associated shifted sesquilinear form
\[
\ell'_\alpha(u,v)=\ell_\alpha(u,v)+b\langle u,v\rangle_{L^2(V_1)},
\quad
D(\ell'_\alpha)=D(\ell_\alpha).
\]
Using the preceding estimates and the Cauchy-Schwarz inequality
one deduces from \eqref{eqtr} and \eqref{eq-ma01} that for any $v\in D(\ell'_\alpha)$
there holds, with some $a_1>0$,
\begin{multline}
       \label{eq-ma02}
\big|\ell'_\alpha(\varphi_N,v)- (E_N+b) \langle \varphi_N,v\rangle_{L^2(V_1)}\big|
\\
\le a_1 \alpha^{2N}  \Big(
\ell'_\alpha(\psi_N,\psi_N)^{\frac 12}\ell'_\alpha(v,v)^{\frac 12}
+
\ell'_\alpha(u_N,u_N)^{\frac 12}\ell'_\alpha(v,v)^{\frac 12}
\Big)
\end{multline}
as $\alpha$ is sufficiently small.
Remark that for small $\alpha$ the values $\ell'_\alpha(\psi_N,\psi_N)$
and $\ell'_\alpha(u_N,u_N)$ can be estimated as $O(\alpha^{-2})$. On the other hand,
$\ell'_\alpha(\varphi_N,\varphi_N)\ge \|\varphi_N\|^2=1+O(1)$
for small $\alpha$, and it follows from \eqref{eq-ma02}
that, with some $a_2>0$, 
\[
\big|\ell'_\alpha(\varphi_N,v)- (E_N+b) \langle \varphi_N,v\rangle_{L^2(V_1)}\big|
\le a_2 \alpha^{2N-1}  
\ell'_\alpha(\varphi_N,\varphi_N)^{\frac 12}\ell'_\alpha(v,v)^{\frac 12},
\quad v\in D(\ell'_\alpha).
\]
A simple application of the spectral theorem, see Proposition~\ref{propd1} in Appendix~\ref{appd}, shows that
\[
\dist \big( \Spec L_\alpha, E_N)\le \dfrac{a_2\alpha^{2N-1}}{1-a_2\alpha^{2N-1}} (E_N+b)
=O(\alpha^{2N-1}) \text{ as $\alpha$ tends to $0$.}
\]
By Theorem~\ref{Thprinc}, the only point of the spectrum of $L_\alpha$
which can satisfy the above estimate is the $n$th eigenvalue $E_n(L_\alpha)$.
As $N$ is arbitrary, the result follows.
\end{proof}

\section{Decay of eigenfunctions}\label{sec50}

Let $\alpha\in(0,\frac{\pi}{2})$ be fixed. The following proposition (Agmon-type estimate)
shows that the eigenfunctions of $T_\alpha$ corresponding to the discrete eigenvalues
are localized near the vertex of the sector.

\begin{theorem}\label{agmon}
Let $E$ be a discrete eigenvalue of $T_\alpha$ and $\mathcal V$
be an associated  eigenfunction, then for any $\epsilon\in(0,1)$
one has
\begin{equation}
      \label{agm2}
\int_{U_\alpha} \big( \lvert \nabla \mathcal V\rvert^2 + \lvert \mathcal V\rvert^2 \big) e^{2(1-\epsilon) \sqrt{-1-E} \lvert x \rvert}dx < +\infty.
\end{equation}
\end{theorem}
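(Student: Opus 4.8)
The plan is to run a weighted-energy (Agmon) argument based on the ground-state substitution, exploiting the gap $\mu:=-1-E>0$ between the eigenvalue $E$ and the bottom $-1=\inf\Specess T_\alpha$ from Theorem~\ref{SPEC}, together with a localization separating a neighbourhood of the vertex from the two channels going to infinity along the edges, as in~\cite{agmon}. Fix $\epsilon\in(0,1)$ and set $\beta:=(1-\epsilon)\sqrt{\mu}$. For $\tau>0$ I would introduce the bounded Lipschitz weight $\Phi_\tau(x):=\beta\min\{|x|,\tau\}$, so that $|\nabla\Phi_\tau|\le\beta$ almost everywhere, $\Phi_\tau\nearrow\beta|x|$ as $\tau\to\infty$, and $e^{2\Phi_\tau}\mathcal V$ belongs to the form domain $H^1(U_\alpha)$.

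The first step is the Agmon identity. Since $\mathcal V$ is an eigenfunction one has $t_\alpha(\mathcal V,w)=E\langle\mathcal V,w\rangle$ for every $w\in H^1(U_\alpha)$; choosing $w=e^{2\Phi_\tau}\mathcal V$ and writing $g_\tau:=e^{\Phi_\tau}\mathcal V$, a direct computation (using that $\Phi_\tau$ is real-valued and the Robin condition $\partial_\nu u=u$) gives
\[
\int_{U_\alpha}|\nabla g_\tau|^2\,dx-\int_{\partial U_\alpha}|g_\tau|^2\,ds=\int_{U_\alpha}\big(E+|\nabla\Phi_\tau|^2\big)|g_\tau|^2\,dx,
\]
that is, $t_\alpha(g_\tau,g_\tau)=\int_{U_\alpha}(E+|\nabla\Phi_\tau|^2)|g_\tau|^2\,dx$. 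The gain from the weight is that $E+|\nabla\Phi_\tau|^2\le E+\beta^2=-1-\mu\epsilon(2-\epsilon)=:-1-c_0$ with $c_0>0$, whence $t_\alpha(g_\tau,g_\tau)\le(-1-c_0)\|g_\tau\|^2_{L^2(U_\alpha)}$.

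The second step is to confront this with the lower bound coming from the essential spectrum. Using an IMS partition of unity $\chi_0^2+\chi_1^2=1$ with $\chi_0$ supported in $\{|x|<2R\}$ and $\chi_1$ in $\{|x|>R\}$, one has $t_\alpha(g_\tau,g_\tau)=t_\alpha(\chi_0 g_\tau,\chi_0 g_\tau)+t_\alpha(\chi_1 g_\tau,\chi_1 g_\tau)-\int_{U_\alpha}(|\nabla\chi_0|^2+|\nabla\chi_1|^2)|g_\tau|^2\,dx$, the last term being $O(R^{-2})\|g_\tau\|^2$. By the Persson characterization of $\inf\Specess T_\alpha=-1$, for any $\delta>0$ one can choose $R$ so large that $t_\alpha(\chi_1 g_\tau,\chi_1 g_\tau)\ge(-1-\delta)\|\chi_1 g_\tau\|^2$. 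Taking $\delta<c_0$ and $R$ large and combining with the upper bound, then absorbing the $O(R^{-2})$ error, yields an inequality of the form $\|\chi_1 g_\tau\|^2\le C\big(|t_\alpha(\chi_0 g_\tau,\chi_0 g_\tau)|+\|\chi_0 g_\tau\|^2\big)$ with $C$ independent of $\tau$.

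Finally, the right-hand side is bounded uniformly in $\tau$, since on $\supp\chi_0$ both $\Phi_\tau\le2\beta R$ and $|\nabla\Phi_\tau|\le\beta$ are bounded independently of $\tau$, so $\chi_0 g_\tau$ stays bounded in $H^1(U_\alpha)$; consequently $\|g_\tau\|^2=\int_{U_\alpha}e^{2\Phi_\tau}|\mathcal V|^2\,dx$ is bounded uniformly, and monotone convergence as $\tau\to\infty$ gives $\int_{U_\alpha}e^{2\beta|x|}|\mathcal V|^2\,dx<\infty$, the $L^2$-part of \eqref{agm2}. The gradient part then follows from the same identity: $\int|\nabla g_\tau|^2=\int_{\partial U_\alpha}|g_\tau|^2+\int(E+|\nabla\Phi_\tau|^2)|g_\tau|^2$, where the boundary term is controlled by a trace inequality $\int_{\partial U_\alpha}|g_\tau|^2\le\tfrac12\int|\nabla g_\tau|^2+C\|g_\tau\|^2$ (the relative form-boundedness of the boundary term established in Theorem~\ref{SPEC}); this bounds $\int e^{2\Phi_\tau}|\nabla\mathcal V|^2$ uniformly in $\tau$ via $\nabla g_\tau=e^{\Phi_\tau}(\nabla\mathcal V+\mathcal V\nabla\Phi_\tau)$ and $|\nabla\Phi_\tau|\le\beta$, and monotone convergence concludes. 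I expect the main obstacle to be the second step, namely making rigorous the Persson-type lower bound near infinity — quantifying that outside a large ball the form $t_\alpha$ is bounded below by essentially $-1$ — which is precisely the geometric content of the essential-spectrum threshold and is handled by the same vertex/edge decomposition of $U_\alpha$ already employed in Theorem~\ref{thm31}.
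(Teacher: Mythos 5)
Your proposal is correct and structurally the same as the paper's proof: truncated Agmon weight $\beta\min\{|x|,\tau\}$, the identity $t_\alpha(e^{\Phi}\mathcal V,e^{\Phi}\mathcal V)=\int(E+|\nabla\Phi|^2)|e^{\Phi}\mathcal V|^2dx$, an IMS partition with $O(R^{-2})$ error, a lower bound of the form ``$\ge -1-\delta$'' on functions living in $\{|x|>R\}$, absorption, a $\tau$-uniform bound on the weighted norm, and monotone convergence. The one point where you and the paper genuinely diverge is the step you yourself flagged as the obstacle: you invoke ``the Persson characterization'' of $\inf\Specess T_\alpha=-1$, but no Persson-type theorem for Robin Laplacians on domains with noncompact boundary is proved or cited in the paper, and the mere identity $\Specess T_\alpha=[-1,+\infty)$ does not by itself give the needed lower bound for the quadratic form on functions supported outside a large ball; the paper instead proves this bound directly, decomposing $\{x\in U_\alpha:|x|\ge R\}$ into two edge channels $D^\pm$ and a translated sector $C_R$, reducing $D^\pm$ by rotation to a Robin--Neumann strip whose first eigenvalue satisfies $\Lambda_1(T_{RN})=-\frac{1}{(1-\delta)^2}-4e^{-2R\sin\alpha/(1-\delta)}+O(Re^{-4R\sin\alpha/(1-\delta)})$ (via \cite[Lemma A.1]{ref2}), which yields the quantitative decay of the error in $R$. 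Your suggested remedy --- the vertex/edge decomposition in the spirit of Theorem~\ref{thm31} and Appendix~\ref{appa2} --- is exactly this argument, so the gap is one of execution, not of idea. A second, minor difference: the paper reserves a gradient portion from the outset by splitting $t_\alpha=\delta\|\nabla\cdot\|^2+(1-\delta)t^{1/(1-\delta)}_\alpha$, while you recover the gradient part a posteriori from the Agmon identity together with the trace inequality $\int_{\partial U_\alpha}|u|^2ds\le\frac12\|\nabla u\|^2+C\|u\|^2$ established in the proof of Theorem~\ref{SPEC}; both devices work, yours being marginally more economical, the paper's keeping all constants explicit in a single chain of inequalities.
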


We remark that the term $(-1-E)$ appearing in the exponential is exactly the distance between
the eigenvalue and the bottom of the essential spectrum.

\begin{proof}
Let $\epsilon\in(0,1)$ and $L>0$. Define
$\phi_L(x)=\sqrt{-1-E} \min\big(\lvert x\rvert,L\big)$.
We are going to show first that there exists $K_\epsilon>0$ such that
\begin{equation}
   \label{agm1}
\lVert \mathcal V e^{(1-\epsilon)\phi_L} \rVert^2_{H^1(U_\alpha)} \leq K_\epsilon.
\end{equation}
The proof follows essentially the same steps as \cite[Proposition 2.8]{ref2} showing a similar result
for the lowest eigenvalue of corner domains.

Let $\chi_0$ and $\chi_1$ be smooth functions of $\mathbb R_+$ satisfying
\[
\chi_0(t)=1 \text{ for } 0<t<1, \quad
\chi_0(t)=0 \text{ for } t>2, \quad
\chi_0^2(t)+\chi_1^2(t)=1.
\]
For $R>0$, consider the functions
$\chi_{j,R}(x)=\chi_j\big(\lvert x \rvert/R\big)$
 defined on $U_\alpha$, $j=0,1$.
We get easily, for $u\in H^1(U_\alpha)$ and for $\gamma >0$,
\[
t^\gamma_\alpha(u,u)=t^\gamma_\alpha(u\chi_{0,R},u\chi_{0,R})+t^\gamma_\alpha(u\chi_{1,R},u\chi_{1,R}) -\sum_{j=0,1} \lVert u\nabla \chi_{j,R} \lVert^2_{L^2(U_\alpha)}.
\]
In particular, there exists $C>0$ such that 
\begin{align}
\label{EA}
t^\gamma_\alpha(u,u)\geq t^\gamma_\alpha(u\chi_{0,R},u\chi_{0,R})+t^\gamma_\alpha(u\chi_{1,R},u\chi_{1,R}) -\frac{C}{R^2} \lVert u \rVert^2_{L^2(U_\alpha)},
\quad
u\in H^1(U_\alpha), \quad R>0.
\end{align}
For $\delta\in (0,1)$ we have $t_\alpha(u,u)=\delta \lVert \nabla u \rVert_{L^2(U_\alpha)}^2 +(1-\delta) t^{\frac{1}{1-\delta}}_\alpha(u,u)$,
and \eqref{EA} leads to
\begin{multline}
\label{EA1}
t_\alpha(u,u)\geq \delta\lVert \nabla u \rVert ^2_{L^2(U_\alpha)} \\
+(1-\delta) \left(t^{\frac{1}{1-\delta}}_\alpha(u\chi_{0,R},u\chi_{0,R})+t^{\frac{1}{1-\delta}}_\alpha(u\chi_{1,R},u\chi_{1,R}) -\frac{C}{R^2} \lVert u \rVert^2_{L^2(U_\alpha)}\right).
\end{multline}
We are going to provide a lower bound for the first two terms in the bracket.
As $u\chi_{0,R}\in H^1(U_\alpha)$, we have immediatly by the min-max principle
\begin{align}
\label{EA2}
t_\alpha^{\frac{1}{1-\delta}}(u\chi_{0,R},u\chi_{0,R})\geq E_1(T^{\frac{1}{1-\delta}}_\alpha) \lVert u\chi_{0,R}\rVert^2_{L^2(U_\alpha)}=
-\dfrac{1}{(1-\delta)^2\sin^2\alpha}
\lVert u\chi_{0,R}\rVert^2_{L^2(U_\alpha)}.
\end{align}
To estimate $t^{\frac{1}{1-\delta}}_\alpha(u\chi_{1,R},u\chi_{1,R})$ we introduce the domain
$U_\alpha^R=\{x\in U_\alpha, \text{ } \lvert x \rvert \geq R \}$,
and let $C_R$ be the sector obtained by translation of vector $(R,0)$ of $U_\alpha$.
Define
\[
D^+=\big(U_\alpha^R\backslash \overline{C_R}\big)\cap \big(\mathbb R \times \mathbb R_+\big), \quad
D^-=\big(U_\alpha^R\backslash \overline{C_R}\big)\cap \big(\mathbb R \times \mathbb R_-\big),
\]
see Figure~\ref{dessinEA2}, then
$\overline{U_\alpha^R}=\overline{D^+\cup D^- \cup C_R}$.

\begin{figure}
	\centering
		\includegraphics[width=0.40\textwidth]{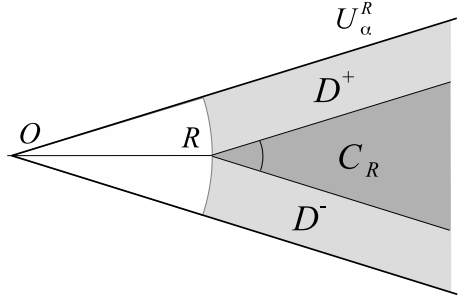}
	\caption{\label{dessinEA2}Partition of $U^R_\alpha$.}
\end{figure}

Consider the  sesquilinear forms
\begin{gather*}
q^\pm(u,u)=\int_{D^\pm}\lvert \nabla u\rvert ^2 dx -\frac{1}{1-\delta}\int_{\partial U_\alpha\cap \partial D^\pm}\lvert u\rvert^2 ds, 
\quad
D(q^\pm)=\big\{u\in H^1(D^\pm),\text{ } u=0 \text{ if } \lvert x \rvert=R\big\},\\
q_{C_R}(u,u)=\int_{C_R}\lvert \nabla u\rvert^2 dx, \quad D(q_{C_R})=H^1(C_R),
\end{gather*}
and denote by $Q^\pm$ and $Q_{C_R}$ the associated self-adjoint operators
in $L^2(D^\pm)$ and $L^2(C_R)$ respectively.
As $Q^+$ and $Q^-$ are unitarily equivalent and $q_{C_R}$ is non-negative, we have
\begin{multline*}
t_\alpha^{\frac{1}{1-\delta}}(u\chi_{1,R},u\chi_{1,R})= q^+(u\chi_{1,R},u\chi_{1,R})+q^-(u\chi_{1,R},u\chi_{1,R})+q_{C_R}(u\chi_{1,R},u\chi_{1,R}) \\
\geq \Lambda_1(Q^+)\left ( \lVert u\chi_{1,R}\rVert^2_{L^2(D^+)}+\lVert u\chi_{1,R}\rVert^2_{L^2(D^-)}\right),
\end{multline*}
We define 
\[
\mathcal R_\alpha=\Big\{x\in\mathbb R^2: \frac{x_2}{\tan\alpha}\leq x_1\leq \frac{x_2}{\tan\alpha}+R\Big\}.
\]
Remark that, if one takes $u\in D(q^+)$ and denotes by $\widetilde u$ its extension by zero to $\mathcal R_\alpha$ then, $q^+(u,u)=q_{\mathcal R_\alpha}(\widetilde u,\widetilde u)$, where
\[
q_{\mathcal R_\alpha}(u,u)=\int_{\mathcal R_\alpha}\lvert \nabla u\rvert ^2 dx -\frac{1}{1-\delta} \int_{\mathbb R} \Big\lvert u \Big(\frac{x_2}{\tan\alpha},x_2\Big)
\Big\rvert^2  \frac{dx_2}{\sin \alpha}, \quad
u\in H^1(\mathcal R_\alpha).
\]
Then $\inf\Spec Q^+ \geq \inf\Spec Q_{\mathcal R_\alpha}$, where $Q_{\mathcal R_\alpha}$ is the self-adjoint operator associated to $q_{\mathcal R_\alpha}$ acting in $L^2(\mathcal R_\alpha)$.
By applying an anti-clockwise rotation of angle $\frac{\pi}{2}-\alpha$
one sees that $Q_{\mathcal R_\alpha}$ is unitarily equivalent to $T_{RN}\otimes 1 + 1\otimes L$, where
$T_{RN}$ is the (Robin-Neumann) Laplacian acting in $L^2(0,R\sin \alpha)$ and defined on
\[
D(T_{RN})=\Big\{u\in H^2(0,R\sin \alpha), \text{ } -u'(0)=\frac{1}{1-\delta} u(0) \text{ and } u'(R\sin\alpha)=0\Big\},
\]
and $L$ is the free Laplacian in $L^2(\mathbb R)$. Then, $\inf\Spec Q_{\mathcal R_\alpha}=\inf\Spec T_{RN}$. By \cite[Lemma A.1]{ref2} we have
\[
\Lambda_1(T_{RN})=-\frac{1}{(1-\delta)^2} -4 e^{-2\frac{R\sin \alpha}{1-\delta}} +O(Re^{-4\frac{R\sin \alpha}{1-\delta}}), \text { } R \to \infty,
\]
and there exist $R_0>0$ and $C_0>0$ such that for all $R\geq R_0$ we have
\[
\Lambda_1(T_{RN})\geq -\frac{1}{(1-\delta)^2} -4C_0e^{-2\frac{R\sin \alpha}{1-\delta}}.
\]
Finally, for all $u\in H^1(U_\alpha)$ we have
\begin{align}
\label{EA3}
t^{\frac{1}{1-\delta}}_\alpha(u\chi_{1,R},u\chi_{1,R}) \geq \Big({}-\frac{1}{(1-\delta)^2}-4C_0e^{-2\frac{R\sin \alpha}{(1-\delta)}}\Big)\lVert u\chi_{1,R}\rVert ^2_{L^2(U_\alpha)}, \text{ } R\geq R_0.
\end{align}
Combining \eqref{EA1} with \eqref{EA2} and \eqref{EA3} we get, for all $u\in H^1(U_\alpha)$ and $R\geq R_0$,
\begin{multline}
\label{EA4}
t_\alpha(u,u)\geq \delta\lVert \nabla u \rVert ^2_{L^2(U_\alpha)} -\frac{1}{(1-\delta)\sin^2\alpha}\lVert u \chi_{0,R}\rVert^2_{L^2(U_\alpha)} \\
-\left(\frac{1}{1-\delta}+4(1-\delta)C_0e^{-\frac{2R\sin \alpha}{1-\delta}}\right) \lVert u\chi_{1,R}\rVert^2_{L^2(U_\alpha)} -\frac{C(1-\delta)}{R^2} \lVert u\rVert^2_{L^2(U_\alpha)}.
\end{multline}

Denote $\psi_{L,\epsilon}=(1-\epsilon)\phi_L$. The functions $\psi_{L,\epsilon}$ and $\nabla \psi_{L,\epsilon}$ belong to $L^\infty(\mathbb R^2)$ and
\begin{align}
\label{EA5}
\big\lVert \nabla \psi_{L,\epsilon}\big\rVert^2_\infty \leq (1-\epsilon)^2 ({}-1-E).
\end{align}
An integration by parts, see \cite[Lemma 2.7]{ref2} for details, gives
\[
t_\alpha( \mathcal V e^{\psi_{L,\epsilon}},\mathcal V e^{\psi_{L,\epsilon}}) =\int_{U_\alpha} \lvert \mathcal V e^{\psi_{L,\epsilon}}\rvert^2
\Big(E+\lvert \nabla \psi_{L,\epsilon} \rvert^2\Big) dx.
\]
As $\mathcal V e^{\psi_{L,\epsilon}} \in H^1(U_\alpha)$, we obtain by \eqref{EA4}, for all $R\geq R_0$,
\begin{multline*}
\int_{U_\alpha} \lvert \mathcal V e^{\psi_{L,\epsilon}}\rvert^2 \left (E+\lvert \nabla \psi_{L,\epsilon} \rvert^2\right ) dx \\
\geq \delta\lVert \nabla (\mathcal V e^{\psi_{L,\epsilon}}) \rVert ^2_{L^2(U_\alpha)} -\frac{1}{(1-\delta)\sin^2\alpha}\lVert \mathcal V e^{\psi_{L,\epsilon}} \chi_{0,R}\rVert^2_{L^2(U_\alpha)} \\
-\left(\frac{1}{1-\delta}+4(1-\delta)C_0e^{-\frac{2R\sin \alpha}{1-\delta}}\right) \lVert \mathcal V e^{\psi_{L,\epsilon}}\chi_{1,R}\rVert^2_{L^2(U_\alpha)} -\frac{C(1-\delta)}{R^2} \lVert \mathcal V e^{\psi_{L,\epsilon}}\rVert^2_{L^2(U_\alpha)},
\end{multline*}
which can be transformed in virtue of~\eqref{EA5} into
\begin{align}
\label{EA6}
A_0 \lVert \mathcal V e^{\psi_{L,\epsilon}}\chi_{0,R}\rVert ^2_{L^2(U_\alpha)} \geq \delta \lVert\nabla(\mathcal V e^{\psi_{L,\epsilon}}\rVert^2_{L^2(U_\alpha)} + A_1 \lVert \mathcal V e^{\psi_{L,\epsilon}} \chi_{1,R} \rVert^2_{L^2(U_\alpha)},\text{ } R\geq R_0,
\end{align}
with
\begin{align*}
A_0&=(\epsilon^2-2\epsilon) (-1-E) +\frac{\cos^2\alpha+\delta\sin^2\alpha}{(1-\delta)\sin^2\alpha}+\frac{C(1-\delta)}{R^2},\\
A_1&=(2\epsilon-\epsilon^2)(-1-E) -\frac{\delta}{1-\delta}-4(1-\delta)C_0e^{-\frac{2R\sin\alpha}{1-\delta}}-\frac{C(1-\delta)}{R^2}.
\end{align*}
As $\epsilon \in (0,1)$, we have $0<2\epsilon-\epsilon^2<1$.
In addition, $0<-1-E\le -1-E_1(T_\alpha)=\cot^2\alpha$.
Therefore, one can find $R_\epsilon >R_0$ and $\delta_\epsilon \in (0,1)$ such $A_0>0$ and $A_1>0$
that for all $R\geq R_\epsilon$. Furthermore, there exists $m_\epsilon>0$ such that $A_1\geq m_\epsilon$ for $R\geq R_\epsilon$. For the same $\delta_\epsilon$ we can show that there exits $M_\epsilon>0$ such that $A_0\leq M_\epsilon$ for all $R\geq R_\epsilon$. 
The inequality \eqref{EA6} implies then
\[
M_\epsilon\lVert\mathcal V e^{\psi_{L,\epsilon}} \chi_{0,R}\rVert^2_{L^2(U_\alpha)} \geq \delta_\epsilon \lVert \nabla (\mathcal V e^{\psi_{L,\epsilon}})\rVert^2_{L^2(U_\alpha)} +m_\epsilon \lVert \mathcal V e^{\psi_{L,\epsilon}} \chi_{1,R}\rVert^2_{L^2(U_\alpha)},\quad R\geq R_\epsilon,
\]
and, finally,
\[
C_\epsilon\lVert \mathcal V e^{\psi_{L,\epsilon}}\chi_{0,R} \rVert^2_{L^2(U_\alpha)} \geq \lVert \mathcal V e^{\psi_{L,\epsilon}}\rVert^2_{H^1(U_\alpha)}, \text{ }R\geq R_\epsilon,
\]
with $C_\epsilon=\frac{M_\epsilon}{\delta_\epsilon+m_\epsilon}+1$. Notice that, 
\[
\lVert\mathcal V e^{\psi_{L,\epsilon}}\chi_{0,R}\rVert^2_{L^2(U_\alpha)} \leq e^{4(1-\epsilon)\sqrt{-1-E}R} \lVert\mathcal V \chi_{0,R}\rVert^2_{L^2(U_\alpha)}\leq e^{4(1-\epsilon)\sqrt{-1-E}R}\lVert \mathcal V \rVert^2_{L^2(U_\alpha)},
\]
which gives \eqref{agm1}.

Now let us pass from \eqref{agm1} to \eqref{agm2}. We have
\[
\lVert \mathcal V e^{\psi_{L,\epsilon}} \rVert^2_{H^1(U_\alpha)}=\int_{U_\alpha} \Big\lvert \nabla \mathcal V+ \mathcal V \nabla \psi_{L,\epsilon} \Big\rvert^2 e^{2\psi_{L,\epsilon}} dx + \int_{U_\epsilon} \lvert \mathcal V \rvert^2 e^{2\psi_{L,\epsilon}}dx.
\]
Using the inequality
\[
2\Big \lvert \int_{U_\alpha} e^{2\psi_{L,\epsilon}} \mathcal V \left ( \nabla  \mathcal V\cdot\nabla\psi_{L,\epsilon}\right) dx \Big \rvert\leq \sigma \lVert  \nabla \mathcal V  e^{\psi_{L,\epsilon}} \rVert ^2_{L^2(U_\alpha)} +\frac{1}{\sigma} \lVert \nabla \psi_{L,\epsilon}\mathcal V   e^{\psi_{L,\epsilon}} \rVert^2_{L^2(U_\alpha)}, \quad \sigma>0,
\]
with $\sigma=\displaystyle \frac{1}{2}$ we get
\[
\lVert \mathcal V e^{\psi_{L,\epsilon}} \rVert^2_{H^1(U_\alpha)} \geq \int_{U_\alpha} \Big (1-\lvert \nabla \psi_{L,\epsilon}\rvert^2 \Big) \lvert \mathcal V\rvert ^2 e^{2\psi_{L,\epsilon}} dx +\frac{1}{2} \int_{U_\alpha} \lvert \nabla \mathcal V\rvert^2 e^{2\psi_{L,\epsilon}} dx,
\]
and then
\[
\lVert \mathcal V e^{\psi_{L,\epsilon}} \rVert^2_{H^1(U_\alpha)} + \lVert \nabla \psi_{L,\epsilon} \mathcal V e^{\psi_{L,\epsilon}} \rVert^2_{L^2(U_\alpha)} \geq \frac{1}{2} \int_{U_\alpha} \Big ( \lvert \nabla \mathcal V \rvert^2+ \lvert \mathcal V\rvert^2\Big) e^{2\psi_{L,\epsilon}} dx.
\]
By combining \eqref{agm1} with \eqref{EA5} we arrive at 
\[
\int_{U_\alpha} \Big( \lvert \nabla \mathcal V \rvert^2+ \lvert \mathcal V\rvert^2\Big) e^{2\psi_{L,\epsilon}} dx \leq 2K_\epsilon \Big (1+(1-\epsilon )^2(-1-E) \Big).
\]
As the right-hand side does not depend on $L$, 
one can pass to the limit as $L\to+\infty$ using the monotone convergence,
which gives the result.
\end{proof}

\appendix

\section{Proof of Lemma~\ref{density}}\label{appa}
By standard arguments, $C^\infty_c(\mathbb R^2)$ is dense in $H^1(U_\alpha)$.
In order to prove Lemma~\ref{density} it is then sufficient to show that 
any function $v\in C^\infty_c(\mathbb R^2)$ can be approximated
by the functions from $\mathcal F$ in the norm of $H^1(U_\alpha)$.

Let $v\in C^\infty_c(\mathbb R^2)$. Pick a smooth function $\psi\in C^\infty(\mathbb R_+)$ with
\[
\psi(s)=1 \quad \text{if} \quad s \leq \frac{1}{2}, \quad
\psi(s)=0 \quad \text{if} \quad s \geq 1, \quad
0\le \psi\le 1,
\]
and set, for small $\epsilon >0$, 
\[
\chi_\epsilon (x)=\psi\left(\left \lvert\frac{\ln\lvert x\rvert}{\ln \epsilon}\right\rvert\right).
\]
Finally, set $v_\epsilon(x)=\chi_\epsilon(x) v(x)$. 

Notice that $\chi_\epsilon$ is radial and then there exists $\varphi_\epsilon \in C^\infty_c(\mathbb R_+)$ sucht that $\chi_\epsilon(x)=\varphi_\epsilon(\lvert x\rvert)$. In addition, $\varphi_\epsilon$ satisfies
\[
\varphi_\epsilon(r)=1 \quad \text{if} \quad r \in \Big(\sqrt{\epsilon},\frac{1}{\sqrt{\epsilon}}\Big), \quad
\varphi_\epsilon(r)=0 \quad \text{if} \quad r \leq \epsilon \quad\text{or}\quad r\geq \frac{1}{\epsilon},
\]
and ${v_\epsilon} \in \mathcal F$.
Let us show that $\lVert v-v_\epsilon\rVert_{H^1(U_\alpha)} \to 0$ as $\epsilon \to 0$.

Denote $u(r,\theta)\coloneqq v(r\cos\theta,r\sin \theta)$ and $u_\epsilon(r,\theta)\coloneqq v_\epsilon(r\cos\theta,r\sin\theta)$, then
\begin{multline*}
\lVert v - v_\epsilon \rVert^2_{L^2(U_\alpha)}= \int_{-\alpha} ^\alpha \int_0 ^{+ \infty} \lvert u(r, \theta) -u_\epsilon (r,\theta) \rvert ^2 r dr d\theta \\
= \int_{-\alpha} ^\alpha \int_0 ^{\sqrt{\epsilon}} \lvert u \vert ^2 \lvert 1 - \varphi_\epsilon \rvert ^2 r drd\theta + \int_{-\alpha}^\alpha \int_{\frac{1}{\sqrt{\epsilon}}}^{+\infty} \lvert u\rvert ^2\lvert 1 - \varphi_\epsilon \rvert ^2 r dr d\theta,
\end{multline*}
and the right-hand side tends to $0$ as $\epsilon$ is small due to the dominated convergence.
Furthermore,
\begin{multline*}
 \lVert \nabla v - \nabla v_\epsilon \rVert ^2_{L^2(U_\alpha)}  \leq 2 \int_{-\alpha} ^\alpha \int_0 ^{+ \infty} \lvert \nabla v(r\cos\theta,r\sin\theta) \rvert ^2 \lvert 1 - \varphi_\epsilon(r) \rvert ^2 r dr d\theta \\
 + 2 \int_{-\alpha} ^\alpha \int_0 ^{+ \infty } \lvert v(r\cos\theta,r\sin\theta) \vert ^2 \lvert \nabla \chi_\epsilon(r\cos\theta,r\sin\theta) \vert ^2 r dr d\theta,
\end{multline*}
and the first term tends to 0 by the dominated convergence. On the other hand,
\[ \lvert \nabla \chi_\epsilon(r\cos\theta,r\sin\theta) \vert ^2 =\lvert \varphi'_\epsilon (r)\rvert^2= \frac{1}{r^2 \lvert \ln \epsilon \rvert ^2 } \left \lvert \psi' \left ( \left \lvert \frac{\ln r}{\ln \epsilon} \right \rvert \right )\right \lvert ^2, \]
and
\begin{multline*}
\int_{-\alpha} ^\alpha \int_0 ^{+ \infty } \lvert v(r\cos\theta,r\sin\theta) \vert ^2 \lvert \nabla \chi_\epsilon(r\cos\theta,r\sin\theta) \vert ^2 r dr d\theta = \\
\int_{-\alpha}^{\alpha} \int_\epsilon^{\sqrt{\epsilon}} \lvert u(r,\theta) \rvert ^2 \lvert \varphi'_\epsilon(r) \rvert ^2 r dr d\theta + \int_{-\alpha}^{\alpha} \int_{\frac{1}{\sqrt{\epsilon}}}^{\frac{1}{\epsilon}} \lvert u(r,\theta) \rvert ^2 \lvert\varphi'_\epsilon(r) \rvert ^2 r dr d\theta.
\end{multline*}
The functions $u$ and $\psi'$ are bounded, and we can get the following upper bound:
\[
\int_{-\alpha} ^\alpha \int_0 ^{+ \infty } \big\lvert v(r\cos\theta,r\sin\theta) \big\vert^2
\big \lvert \nabla \chi_\epsilon(r\cos\theta,r\sin\theta) \big\vert^2 r dr d\theta
\leq \alpha\frac{\lVert u \rVert^2_\infty \lVert \chi' \rVert^2_\infty}{\lvert \ln\epsilon \rvert} \xrightarrow[\epsilon \to 0]{} 0,
\]
which concludes the proof.

\section{Proof of Theorem~\ref{SPEC}}\label{appa2}

\subsection{Closedness and semiboundedness.} Let $\gamma>0$, $\alpha\in (0,\pi)$ and $u\in\mathcal F$. Using \eqref{2.1.1} we have
\begin{equation}
   \label{trace1}
\begin{aligned}
t_\alpha^\gamma (u,u)&= \int_{x_2 \in \mathbb R}\int_{x_1 > \frac{\lvert x_2 \rvert }{\tan\alpha}}\lvert \nabla u(x_1,x_2)\rvert ^2 dx -\gamma \int_{\mathbb R} \lvert u \left(\frac{\lvert x_2 \rvert}{\tan\alpha},x_2 \right)\rvert ^2 \frac{dx_2}{\sin\alpha} \\
        &\geq \int_{\mathbb R} \left ( \int_{x_1 > \frac{\lvert x_2 \rvert}{\tan\alpha}} \Big\lvert \frac{\partial u}{\partial x_1}(x_1,x_2) \Big\rvert ^2 dx_1
				- \frac{\gamma}{\sin\alpha} \Big\lvert u \Big(\frac{\lvert x_2 \rvert}{\tan\alpha},x_2 \Big)\Big\rvert ^2 \right ) dx_2,\\
				&\geq -\frac{\gamma^2}{\sin^2\alpha}\lVert u\rVert^2_{L^2(U_\alpha)}.
\end{aligned}
\end{equation}
Writing $\gamma=1/\epsilon$ we conclude that for all $u\in \mathcal F$ and for all $\epsilon>0$ we have
\begin{align*}
\int_{\partial U_\alpha} \lvert u\rvert^2 ds \leq \epsilon \int_{U_\alpha}\lvert \nabla u\rvert^2 dx +\frac{1}{\epsilon\sin^2\alpha} \int_{U_\alpha}\lvert u\rvert^2 dx,
\end{align*}
which means that the trace can be extended to a bounded linear map from $H^1(U_\alpha)$
to $L^2(\partial U_\alpha)$.
Furthermore, the boundary term in $t^\gamma_\alpha$ is then infinitesimally small
with respect to the gradient term, hence,
$t^\gamma_\alpha$ is closed on $H^1(U_\alpha)$
due to the KLMN theorem, see \cite[Theorem X.17]{rs2}.

\subsection{Bottom of the spectrum for $\alpha<\frac{\pi}{2}$.}
Let us show that $\inf\Spec T^\gamma_\alpha=-\gamma^2 (\sin^2\alpha)^{-1}$
for $\alpha<\frac{\pi}{2}$. By \eqref{trace1} we have the inequality
$T^\gamma_\alpha \geq -\gamma^2 (\sin^2\alpha)^{-1}$. On the other hand, by the explicit
computation, $t^\gamma_\alpha(u_0,u_0)=-\gamma^2(\sin^2\alpha)^{-1} \|u_0\|^2_{L^2(U_\alpha)}$
for $u_0(x_1,x_2)=e^{-\gamma x_1/\sin \alpha}$. It follows that the lower bound is optimal
and that the bottom of the spectrum is an eigenvalue, and $u_0$ is an associated eigenfunction,
which proves the point (b) of the theorem.

\subsection{Lower bound for $\alpha\ge \frac{\pi}{2}$.}\label{b3}
Let us show that $\inf\Spec T^\gamma_\alpha \geq -\gamma^2$ for $\alpha \in [\frac{\pi}{2},\pi)$.
Decompose $U_\alpha$ into the following three pieces:
\begin{gather*}
D_1=\left\{x\in\mathbb R^2\text{ }: \text{ }\lvert \arg(x_1+i x_2)\rvert< \alpha-\displaystyle \frac{\pi}{2} \right\},\\
D_2=\left(U_\alpha \backslash \overline{D_1}\right ) \cap \left(\mathbb R\times\mathbb R_+\right),
\quad
D_3=\left(U_\alpha \backslash \overline{D_1}\right ) \cap\left( \mathbb R\times \mathbb R_-\right),
\end{gather*}
then $\overline{U_\alpha}=\overline{D_1\cup D_2\cup D_3}$ and we define, for $j=1,2,3$,
\[
q_j(u,u)=\int_{D_j}\lvert \nabla u\rvert^2 dx -\int_{\partial U_\alpha \cap \partial D_j} \lvert u\rvert^2 ds, \quad u\in H^1(D_j).
\]
By the min-max principle and the inclusion $H^1(U_\alpha)\subset H^1( D_1 \mathop{\cup}D_2\mathop{\cup}D_3)$ we have
\begin{equation}
 \label{infspec1}
\inf\Spec  T^\gamma_\alpha\geq \inf\Spec  Q,
\end{equation}
where $Q$ is the self-adjoint operator acting on $L^2(U_\alpha)$ associated to the sesquilinear form defined for $u\in H^1( D_1 \mathop{\cup}D_2\mathop{\cup}D_3)$ by
$q(u,u)=q_1(u,u)+q_2(u,u)+q_3(u,u)$.
We have then $Q=Q_1\oplus Q_2 \oplus Q_3$,
where $Q_j$ are the self-adjoint operator associated with $q_j$ and acting in $L^2(D_j)$.
Notice that $Q_1$ is positive and $Q_2$ and $Q_3$ are unitarily equivalent
and have the same spectrum.
Furthermore, $Q_2$ is unitarily equivalent to $T_N\otimes 1+1\otimes B_\gamma$, where $T_N$ is the Neumann Laplacian in $L^2(\mathbb R_+)$
and $B_\gamma$ is defined in subsection \ref{halfline}, which gives $\inf\Spec Q_2=-\gamma^2$.
Therefore, $\inf\Spec Q=\min\{\inf\Spec Q_1,\inf\Spec Q_1,\inf\Spec Q_1\}=-\gamma^2$,
and \eqref{infspec1} gives the result.

\subsection{Lower bound for the essential spectrum as $\alpha<\frac{\pi}{2}$.}\label{b4}

Let us show the lower bound
\begin{align}
\label{Specess0}
\inf\Specess T_\alpha^\gamma \geq -\gamma^2.
\end{align}
Let $A=(a,0)$ with $a>0$. We denote by $C_A$ the sector obtained after a translation of $U_\alpha$ along the vector $OA$. Let $H_A$ be the orhogonal projection of $A$ on the half-line
$\mathbb R_+ (1,\tan\alpha)$ and $L:=\lvert AH_A\rvert\equiv a \sin\alpha$, and, in particular, $L\to +\infty$ for $a\to +\infty$.
In the same way, we define $H'_A$ the orthogonal projection of $A$ on $\mathbb R_+(1,-\tan\alpha)$. Consider the following four domains:
\begin{align*}
D_1&=OH_AAH'_A, & D_2&=\left( U_\alpha \backslash(\overline{C_A\cup D_1})\right) \cap \left(\mathbb R\times\mathbb R_+\right),\\
D_3&=\left( U_\alpha \backslash(\overline{C_A\cup D_1})\right) \cap \left(\mathbb R\times \mathbb R_-\right), &
D_4&=C_A,
\end{align*}
see Fig~\ref{dessinSpecess}. Clearly,
$\overline{U_\alpha}=\overline{D_1\cup D_2 \cup D_3\cup D_4 }$.
\begin{figure}
  \centering
		\includegraphics[width=0.40\textwidth]{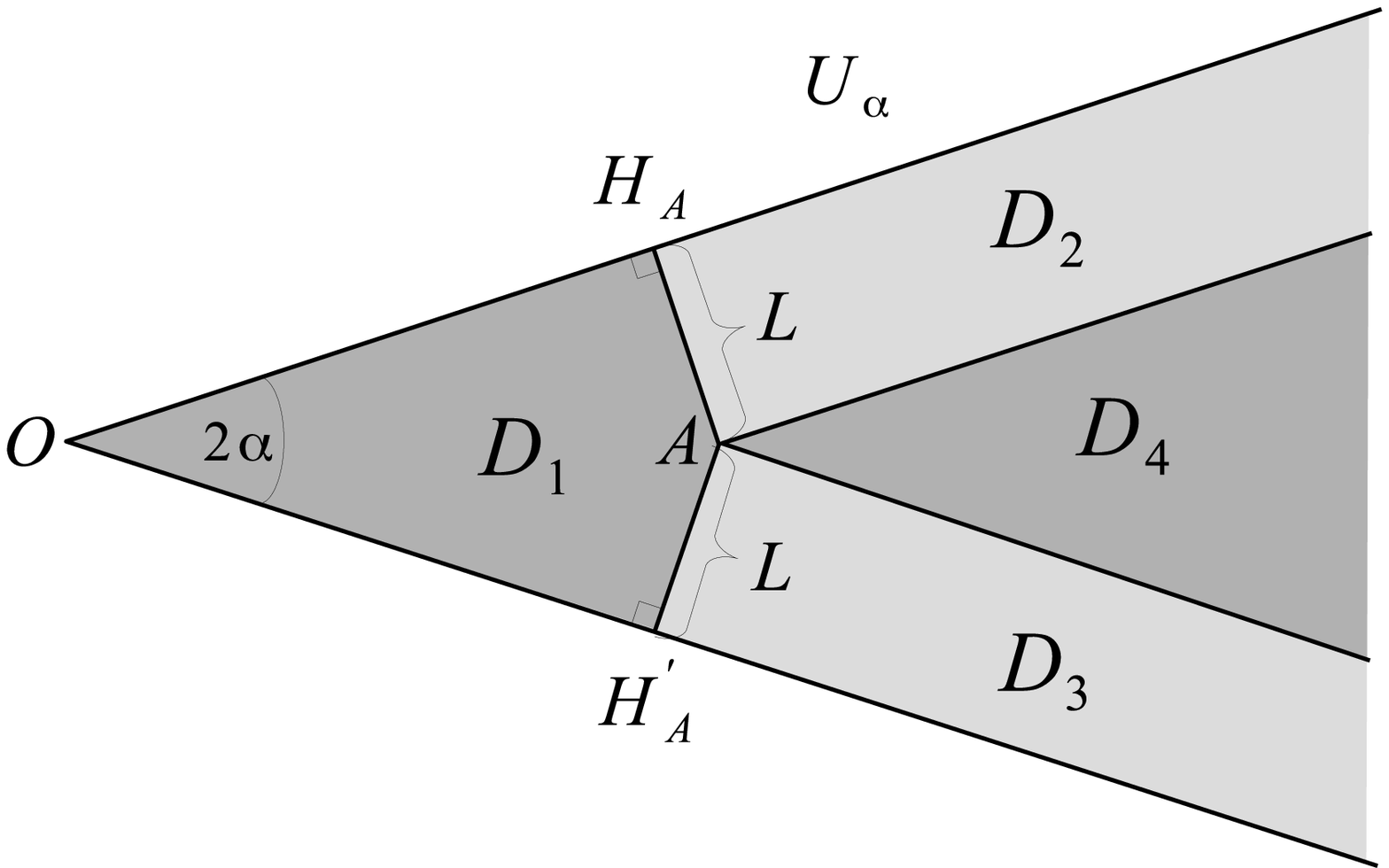}
	\caption{\label{dessinSpecess}Partition of $U_\alpha$.}
\end{figure}
Define for $j\in\{1,2,3,4\}$ the sesquilinear forms
\[
q_j(u,u)=\int_{D_j}\lvert \nabla u\rvert^2 dx -\gamma \int_{\partial U_\alpha\cap \partial D_j} \lvert u \rvert^2 ds,
\quad 
D(q_j)=H^1(D_j). 
\]
By the min-max principle, the inclusion $H^1(U_\alpha)\subset H^1(D_1 \mathop{\cup}D_2\mathop{\cup}D_3\mathop{\cup}D_4)$
implies the inequality
\begin{equation}
 \label{eqess}
\inf \Specess T^\gamma_\alpha \ge \inf \Specess Q,
\end{equation}
 where $Q$ is the self-adjoint operator acting in $L^2(U_\alpha)$
and associated to the sesquilinear form defined for $u\in H^1( D_1 \mathop{\cup}D_2\mathop{\cup}D_3\mathop{\cup}D_4)$ by
$q(u,u)=q_1(u,u)+q_2(u,u)+q_3(u,u)+q_4(u,u)$.
(Remark that we use the convention $\inf\varnothing=+\infty$.)
We have then $Q=Q_1\oplus Q_2 \oplus Q_3 \oplus Q_4$, where each $Q_j$
is the self-adjoint operator acting in $L^2(D_j)$ and
associated to $q_j$, and \eqref{eqess} implies
\begin{equation}
 \label{eqess2}
\inf \Specess T^\gamma_\alpha\ge \min_{j\in\{1,2,3,4\}} \inf \Specess Q_j.
\end{equation}
The operator $Q_1$ has a compact resolvent, then its essential spectrum is empty and $\inf\Specess Q_1=+\infty$, and the operator $Q_4$ is non-negative,
hence, $\inf \Specess Q_4\ge 0$.

The operator $Q_2$ et $Q_3$ are unitarily equivalent, so we have to study $Q_2$ only. By applying an anticlockwise rotation by angle $\theta=\frac{\pi}{2}-\alpha$
we see that $Q_2$ is unitarily equivalent to the self-adjoint operator $\tilde Q_2$
acting in $H^1\big((0,L)\times \mathbb R_+ \big)$ 
and associated with the sesquilinear form
\[
\tilde q_2(u,u)=\int_{0}^{+\infty} \int_0^L\lvert \nabla u \rvert ^2 dx
-\gamma \int_0^{+\infty} \big\lvert u (0,x_2)\big\rvert^2 dx_2,
\quad
u\in H^1\big((0,L)\times \mathbb R_+ \big).
\]
Clearly, $\tilde Q_2=T^\gamma_{RN}\otimes 1 + 1\otimes T_N$, where $T^\gamma_{RN}$ is the Robin-Neumann Laplacian acting in $L^2(0,L)$
on the domain
\[
D(T^\gamma_{RN})=\big\{u\in H^2(0,L):  u'(0)+\gamma u(0)=u'(L)=0\big\},
\]
and $T_N$ is the Neumann Laplacian in $L^2(\mathbb R_+)$. As $\Spec T_N=[0,+\infty)$, we have
 $\Specess Q_2=\Specess Q_2=\big[\Lambda_1(\Spec T^\gamma_{RN}),+\infty\big)$,
and an easy computation, see e.g. \cite[Lemme A.1]{ref2}, shows that
$\Lambda_1(T^\gamma_{RN})=-\gamma^2+o(1)$  as $L\to +\infty$.
Hence, for any $\epsilon>0$ there exists $L_\epsilon>0$ such as 
\[
\inf\Specess Q_2\geq -\gamma^2-\epsilon
\text{ for } L\geq L_\epsilon.
\]
As \eqref{eqess2} is valid for any $L>0$, we have
$\inf\Specess T^\gamma_\alpha\ge-\gamma^2-\varepsilon$ for any $\varepsilon>0$, which
implies \eqref{Specess0}.

\subsection{Description of the essential spectrum}
Due to the results of subsections \ref{b3} and \ref{b4}
we have $\Specess T^\gamma_\alpha\subset[-\gamma^2,+\infty)$
for any $\alpha$.
In order to conclude the proof of the point (a), it is sufficient to show the inclusion
\begin{equation}
\label{Specess02}
[-\gamma^2,+\infty)\subset \Spec T^\gamma_\alpha.
\end{equation}
It is more convenient to work with the rotated sector
\[
\tilde U_\alpha =\big\{x\in\mathbb R^2: 0<\arg(x_1+ix_2)<2\alpha \big\}
\]
and the associated Robin Laplacian $\tilde T^\gamma_\alpha$ in $L^2(\tilde U_\alpha)$
corresponding to the sesquilinear form
\[
\tilde t^\gamma_\alpha(u,u)=\int_{\tilde U_\alpha} |\nabla u|^2 dx -\gamma\int_{\partial \tilde U_\alpha} |u|^2ds,
\quad
u\in H^1(\tilde U_\alpha).
\]
Clearly, $\tilde T^\gamma_\alpha$ is unitarily equivalent to $T^\gamma_\alpha$.
The proof of \eqref{Specess02} consists in finding, for each $k\in\mathbb R$,
a family of functions $f_N\in D(\tilde T^\gamma_\alpha)$ with
\begin{equation}
\label{limiteTF}
\frac{\lVert \tilde T^\gamma_\alpha f_N-(k^2-\gamma^2)f_N\rVert^2_{L^2(\tilde U_\alpha)}}{\lVert f_N\rVert^2_{L^2(\tilde U_\alpha)}} \to 0 \text{ for } N\to +\infty,
\end{equation}
then the result follows by the spectral theorem for self-adjoint operators.
Let $\phi \in C^\infty(\mathbb R)$ satisfying $\phi(t)=0$ for $t\leq 0$ and $\phi(t)=1$ for $t \geq 1$.
For $N\in \mathbb N$, define a function $f_N:\tilde U_\alpha\to \mathbb C$ by
\[
f_N(x_1,x_2)=e^{ikx_1-\gamma x_2} \phi(2N-x_1)\phi(x_1-N) \phi(aN-x_2),
\]
where $a=\sin 2\alpha$ for $\alpha<\frac\pi 4$ and $a>0$ is arbitrary otherwise.
The function $f_N$ is smooth, compactly supported and satisfies the Robin boundary condition
$\partial f_N/\partial \nu=\gamma f_N$ at $\partial\tilde U_\alpha$, hence,
it belongs to $D(\tilde T^\gamma_\alpha)$, and $\tilde T^\gamma_\alpha f_N=-\Delta f_N$.
Easy estimates show that for large $N$ we have
\[
\big\|f_N\big\|^2_{L^2(\Tilde U_\alpha)}\ge c N \text{ with } c>0,
\quad
\big\| -\Delta f_N-(k^2-\gamma^2)f_N\big\|^2_{L^2(\tilde U_\alpha)}=O(1),
\]
which gives \eqref{limiteTF}.

Finally, the combination of \eqref{Specess02} with the result of subsection~\ref{b3}
gives the point (c) of the theorem.

\section{Study of the operator $H^{\infty}_a$}\label{appb}

We consider the operator acting on $L^2(0,+\infty)$ defined by:
\[
H_a=-\frac{d^2}{dr^2} -\frac{1}{4r^2}-\frac{1}{a r},
\quad
D(H_a)=C^\infty_c(\mathbb R_+).
\]
and the associated sesquilinear form $h_a$,
\[
h_a(u,u)=\int_{\mathbb R_+}
\Big(
\big| u'(r)\big|^2dr-\frac{\big|u(r)\big|^2}{4r^2}-\frac{\big|u(r)\big|^2}{a r}
\Big) dr,
\quad
u\in C^\infty_c(\mathbb R_+).
\]
Some parts of the analysis of the operator $T^\gamma_\alpha$
are based on the spectral properties of the Friedrichs extension
$H^\infty_a$ of $H_a$. Various parts of the description
and of the spectral analysis 
of $H^\infty_a$ are spread through the literature,
and in the present section we give a compact presentation
of the necessary results.

\subsection{The adjoint of $H_a$}
Let  $H_a^*$ be the adjoint of $H_a$. Recall the deficiency subspaces of $H_a$ are defined by
\[
\mathcal K_{\pm}=\Ker(H_a^*\mp i)=\Ran(H_a\pm i)^{\perp},
\]
and their dimensions $n_{\pm}=\dim(\mathcal K_\pm)$ are called the deficiency indices of $H_a$.
The following proposition gives us the existence of self-adjoint extensions of $H_a$.
\begin{proposition}
The operator $H_a^*$ is given by the same differential expression as $H_a$ and acts on the domain
\begin{equation}
         \label{dha}
D(H_a^*)=\big\{u\in L^2(\mathbb R_+):\, u\in H^2(\epsilon,+\infty) \text{ for any } \epsilon>0 \text{ and }
H_a^*u\in L^2(\mathbb R_+)\big\}.
\end{equation}
The deficiency indices of $H_a$ are equal to $1$, and $H_a$ admits self-adjoint extensions.
\end{proposition}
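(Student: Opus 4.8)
The plan is to treat the three assertions in turn: the description of $H_a^*$ and its domain, the computation of the deficiency indices, and the existence of self-adjoint extensions, the last being an immediate consequence of the second. For the description of $H_a^*$ I would use the standard characterization of the adjoint of a densely defined differential operator. Writing $\tau$ for the formal expression $\tau u=-u''-\frac{1}{4r^2}u-\frac{1}{ar}u$, a function $u\in L^2(\mathbb R_+)$ lies in $D(H_a^*)$ if and only if there is $g\in L^2(\mathbb R_+)$ with $\langle \tau\varphi,u\rangle=\langle\varphi,g\rangle$ for all $\varphi\in C^\infty_c(\mathbb R_+)$; since $\tau$ is formally symmetric this says precisely that $\tau u=g$ holds in the sense of distributions with $\tau u\in L^2(\mathbb R_+)$, so $H_a^*u=\tau u$. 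The regularity statement then follows by a local elliptic bootstrap: on any interval $(\epsilon,+\infty)$ the potential $r\mapsto \frac{1}{4r^2}+\frac{1}{ar}$ is bounded, hence $u''=-g-\big(\frac{1}{4r^2}+\frac{1}{ar}\big)u\in L^2(\epsilon,+\infty)$, which together with $u\in L^2$ yields $u\in H^2(\epsilon,+\infty)$ after shrinking $\epsilon$ slightly, giving exactly \eqref{dha}.

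For the deficiency indices I would invoke the Weyl limit-point/limit-circle theory for the Sturm--Liouville expression $\tau$ on $(0,+\infty)$, applied at the two singular endpoints $0$ and $+\infty$. The index $n_\pm=\dim\Ker(H_a^*\mp i)$ equals the number of linearly independent solutions of $\tau u=\pm i\,u$ that lie in $L^2(\mathbb R_+)$. At $+\infty$ the potential is bounded (it takes values in $[-\frac14-\frac1a,0]$ on $(1,+\infty)$), so the endpoint is in the limit-point case and exactly one solution is square-integrable near $+\infty$; concretely, for $\lambda=\pm i$ the two solutions behave like $e^{\pm\sqrt{-\lambda}\,r}$ with $\Re\sqrt{-\lambda}>0$, so one decays and one grows. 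At the origin the leading balance of $\tau u=\lambda u$ is the Euler equation $-u''-\frac{1}{4r^2}u=0$, whose indicial equation $s(s-1)+\frac14=0$ has the double root $s=\frac12$; a Frobenius-type asymptotic analysis then shows that the two independent solutions behave like $r^{1/2}$ and $r^{1/2}\ln r$ as $r\to0$, both of which are square-integrable near $0$ since $\int_0^1 r\,dr$ and $\int_0^1 r(\ln r)^2\,dr$ are finite. Thus $0$ is in the limit-circle case. Combining the two endpoints, the globally $L^2$ solution space is one-dimensional, because the unique solution that is $L^2$ at $+\infty$ is automatically $L^2$ near $0$; hence $n_+=n_-=1$.

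The step I expect to be the main obstacle is the limit-circle analysis at the origin, because the Hardy coefficient $\frac14$ is exactly critical: this is precisely the borderline between the limit-circle regime and the limit-point (oscillatory) regime, so the two indicial roots coincide and a logarithmic solution appears. One has to check carefully that the subleading term $-\frac{1}{ar}u$, which is more singular than a bounded perturbation yet strictly less singular than $\frac{1}{4r^2}$, does not destroy the $r^{1/2}(\ln r)$-type asymptotics. I would handle this by substituting $u=r^{1/2}w$ and analyzing the resulting regular-singular equation for $w$, or equivalently by a variation-of-constants/fixed-point argument treating $-\frac{1}{ar}-\lambda$ as a perturbation of the Euler operator and verifying that the correction terms remain $L^2$ near $0$.

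Finally, since the deficiency indices coincide, $n_+=n_-=1$, von Neumann's theorem on self-adjoint extensions of symmetric operators guarantees that $H_a$ admits self-adjoint extensions (in fact a one-parameter family), which establishes the last assertion.
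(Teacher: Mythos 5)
Your proof is correct, and it reaches the same three conclusions as the paper, but the computation of the deficiency indices follows a genuinely different route. For the identification of $H_a^*$ with the maximal operator your argument coincides in substance with the paper's; your distributional formulation is in fact a streamlined version of it, since the paper proves the two inclusions separately and must check that the boundary terms $\overline{u'(\epsilon)}v(\epsilon)-\overline{u(\epsilon)}v'(\epsilon)$ vanish as $\epsilon\to 0$, which in your version is absorbed into the fact that the test functions vanish near both endpoints. The divergence is in the index computation: the paper first gets $n_+=n_-$ from the commutation of $H_a^*$ with complex conjugation, then substitutes $u(r)=w\big(2e^{i\pi/4}r\big)$ to turn $\tau u=-iu$ into Whittaker's equation and cites the known one-dimensionality of its $L^2$ solution space; you instead run the Weyl alternative at the two singular endpoints, with limit point at $+\infty$ (bounded potential) and limit circle at $0$. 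You correctly flag the critical Hardy constant $\tfrac14$ as the delicate point, and your proposed fix is sound: $r=0$ is a regular singular point of $\tau u=\lambda u$ (multiplying the potential by $r^2$ yields a function analytic at $0$), so Frobenius theory with the double indicial root $s=\tfrac12$ rigorously produces the $r^{1/2}$ and $r^{1/2}\ln r$ solutions, both square-integrable near $0$, the Coulomb term $-\tfrac{1}{ar}$ contributing only higher-order corrections. Your approach avoids special functions entirely, works verbatim for any potential that is limit circle at $0$ and limit point at $\infty$, and as a by-product delivers exactly the asymptotics $a_1\sqrt r+a_2\sqrt r\ln r$ that the paper later imports from the literature for its boundary-triple description; the paper's route buys explicit Whittaker representations of the deficiency elements. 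One cosmetic imprecision on your side: since $1/(ar)$ is not integrable at infinity, the solutions behave like $e^{\pm\sqrt{-\lambda}\,r}$ only up to an algebraic Coulomb factor, but this is harmless because the limit-point conclusion at $+\infty$ needs only the boundedness of the potential there, not the precise asymptotics.
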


\begin{proof}
Denote by $\mathcal D$ the set on the right-hand side of \eqref{dha}. Let $v\in D(H^*)$, then for all $u\in D(H_a)$ one has
\[
\langle H_au,v\rangle_{L^2(\mathbb R_+)}=\int_{\mathbb R_+}\overline{\left(-u''(r)-\frac{1}{4r^2}u(r)-\frac{1}{ a r} u(r)\right)} v(r) dr=\langle u,H_a^*v\rangle_{L^2(\mathbb R_+)},
\]
and
\[
H_a^*v(r)=-v''(r)-\frac{1}{4r^2}v(r)-\frac{1}{a r} v(r) \quad \text{in}\quad \mathcal D'(\mathbb R_+).
\]
In particular, $D(H_a^*)\subset\{v\in L^2(\mathbb R_+), H_a^*v\in L^2(\mathbb R_+)\}$. Let $u\in L^2(\mathbb R_+)$ satisfy
$H_a^*u\in L^2(\mathbb R_+)$, then, for all $\epsilon >0$, we have $u/r^2 \in L^2(\epsilon,+\infty)$ and $u/r \in L^2(\epsilon,+\infty)$.
Due to
\[
-u''(r)=H_a^*u(r)+\frac{1}{4r^2}u(r)+\frac{1}{a}u(r) \in L^2(\epsilon,+\infty),
\]
we also have $u\in H^2(\epsilon,+\infty)$ for all $\epsilon>0$.
This shows the inclusion $D(H_a^*)\subset \mathcal D$.

Let us prove the reverse inclusion. Let $v\in \mathcal D$. After an integration by parts we have,
for all $\epsilon>0$ and for all $u\in D(H_a)$, 
\begin{gather*}
\langle H_a u,v\rangle_{L^2(\mathbb R_+)}=\int_0^{\epsilon}\overline{H_a u(r)}v(r) dr+\int_{\epsilon}^{+\infty}\overline{u(r)}\left(-v''(r)-\frac{1}{4r^2}v(r)-\frac{1}{a r}v(r)\right)dr \\
+\overline{u'(\epsilon)}v(\epsilon)-\overline{u(\epsilon)}v'(\epsilon).
\end{gather*}
Notice that the boundary terms are well defined as $u,v,u',v'$ are continuous.
Moreover,  as $u\in C^\infty_c(\mathbb R_+)$ we have
\[
\lim_{\epsilon \to 0}\Big(\overline{u'(\epsilon)}v(\epsilon)-\overline{u(\epsilon)}v'(\epsilon)\Big)=0.
\]
Hence, in the limit $\epsilon \to 0$ we obtain
\[
\langle H_a u,v\rangle_{L^2(\mathbb R_+)}=\int_{\mathbb R_+}\overline{u(r)}\left(-v''(r)-\frac{1}{4r^2}v(r)-\frac{1}{a r}v(r)\right)dr,
\]
for all $u\in D(H_a)$. Then $\mathcal D\subset D(H_a^*)$ and, finally, $D(H_a^*)=\mathcal D$.

Recall that a symmetric operator admits self-adjoint extensions if and only if his deficiency indices are equal.
In our case, $H^*_a$  commutes with the complex conjugation, which gives immediately 
$n_+=n_-$, as $\mathcal K_+$ and $\mathcal K_-$ are mutually complex conjugate.

It remains to determine the deficiency indices. The functions $u\in\Ker(H_a^*+i)$
are the solutions to the differential equation
\[
-u''-\frac{1}{4r^2}u-\frac{1}{a r} u +iu=0.
\]
Represent $u(r)=w\big(2e^{i\frac{\pi}{4}r}\big)$, then the function $w$ is a solution to the Whitakker's equation,
\[
w''(y)+\left(\frac{1}{4y^2} +\frac{1}{2ae^{i\frac{\pi}{4}} y } -\frac{1}{4} \right ) w(y)=0.
\]
whose linearly independent solutions are expressed in terms on the confluent hypergeometric functions,
and the space of $L^2$ solutions is one-dimensional, see \cite[Eq.~13.1.31]{abs}.
\end{proof}

\subsection{Self-adjoint extensions of $H_a$}
To describe the self-adjoint extensions of the operator $H_a$ we use the boundary triple approach,
see e.g. \cite{ref6,GG}. The integration by parts gives  
\begin{gather*}
\langle H_a^*\phi,\psi\rangle_{L^2(\mathbb R_+)}-\langle\phi,H_a^*\psi\rangle_{L^2(\mathbb R_+)}=\delta(\phi,\psi),
\quad \psi,\phi \in D(H_a^*),\\
\text{with }
\delta(\phi,\psi)=\lim_{r\to 0+}\left ( \overline{\phi'(r)}\psi(r)-\overline{\phi(r)}\psi'(r)\right).
\end{gather*}
The self-adjoint extensions of $H_a$ are restrictions of $H^*_a$ to maximal subspaces $D$ satisfying $D(H_a)\subset D \subset D(H_a^*)$
with the property that $\delta(\phi,\psi)=0$ for all $\phi,\psi \in D$. Notice that $\delta(\phi,\psi)$ is finite but $\lim_{x\to 0}\phi(x)$ or $\lim_{x\to 0}\phi'(x)$ could be infinite.

As shown in e.g. \cite[page 301]{ref7}, based on the asymptotic behavior
of the confluent hypergeometric functions, there exists linear maps
$a_1,a_2:D(H_a^*)\to\mathbb C$  such that
for any $\psi \in D(H_a^*)$ one has, as $x\to 0$,
\begin{align*}
\psi(r)&=a_1(\psi) \sqrt{r} +a_2(\psi)\sqrt{r} \ln r+O(r^{\frac{3}{2}}\ln r), \\
\psi'(r)&=\frac{a_1(r)}{2\sqrt{r}}+ \frac{a_2(\psi)}{\sqrt{r}}\left(\frac{\ln r}{2}+1 \right) +O(\sqrt{r} \ln r),
\end{align*}
or, equivalently,
\[
a_2(\psi)=\lim_{r\to 0^+} \frac{\psi(r)}{\sqrt{r}\ln r}, \quad
a_1(\psi)= \lim_{r\to 0^+} \frac{\psi(r)-a_2(\psi)\sqrt{r} \ln r }{\sqrt{r}},
\]
and a direct computation gives the equality
$\delta(\phi,\psi)= \overline{a_2(\phi)}a_1(\psi)-\overline{a_1(\phi)}a_2(\psi)$.
Furthermore, for any $(b_1,b_2)\in\mathbb C^2$ there is $\psi\in D(H_a^*)$
such that $a_1(\psi)=b_1$ and $a_2(\psi)=b_2$. In the language of \cite{ref6},
the triple $(\mathbb C,a_1,a_2)$ is a boundary triple
for $H_a$, and any self-adjoint extension
of $H_a$ is a restriction of $H_a^*$
to the functions $\psi$ satisfying the boundary
condition $a_1(\psi)\cos\vartheta=a_2(\psi)\sin\vartheta$,
where $\vartheta$ is a real-valued parameter.

By \cite[Theorem 3.1]{ref8}, 
the Friedrichs extension $H^\infty_a$
corresponds to the boundary condition $a_2(\psi)=0$, i.e. to $\vartheta=\frac\pi 2$.
The spectral properties for this case are completely analyzed in \cite[Subsection 8.3.3]{ref7}:
the essential spectrum is
$[0,+\infty)$, and the discrete spectrum consists of the simple eigenvalues
$\mathcal E_n(a)$ with the associated eigenfunctions $\psi_n$ given by
\[
\mathcal E_n(a)=-\frac{1}{a^2}\frac{1}{(2n-1)^2}, \quad
\psi_n(r)=\sqrt r \exp\Big(-\frac{r}{(2n-1)a}\Big)L_{n-1}\Big(\dfrac{2 r}{(2n-1)a}\Big),
\quad n\in\mathbb N,
\]
where $L_m$ are the Laguerre polynomials \cite[Section 22.1]{abs}.

\section{Weak quasimodes}\label{appd}

At reader's convenience we provide a complete proof
of the following simple assertion.

\begin{proposition}\label{propd1}
Let $T$ be a self-adjoint operator in a Hilbert space $\mathcal H$ such that
$T\ge a>0$, and  let $t$ be the associated sesquilinear form.
Assume that there exist a non-zero $u\in D(t)$ and numbers $\lambda>0$
and $\varepsilon\in(0,1)$ such that
\begin{equation}
  \label{eq-weak}
\big|\,
t(u,v)-\lambda \langle u, v\rangle 
\big|
\le \varepsilon \sqrt{t(u,u)}\sqrt{t(v,v)}
\text{ for all } v\in D(t),
\end{equation}
then $\dist\big(\lambda,\Spec T\big)\le \dfrac{\varepsilon}{1-\varepsilon}\,\lambda$.
\end{proposition}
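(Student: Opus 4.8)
The plan is to exploit the strict positivity $T\ge a>0$, which makes $T^{1/2}$ a self-adjoint operator with bounded, everywhere-defined inverse $T^{-1/2}$, with $D(t)=D(T^{1/2})$ and $t(x,y)=\langle T^{1/2}x,T^{1/2}y\rangle$ for $x,y\in D(t)$. Writing $\xi\coloneqq T^{1/2}u$, I would test the hypothesis \eqref{eq-weak} against $v=T^{-1/2}\phi$ with $\phi\in\mathcal H$ arbitrary; this is legitimate since $\Ran T^{-1/2}=D(T^{1/2})=D(t)$. A direct computation gives $t(u,v)=\langle\xi,\phi\rangle$, $\langle u,v\rangle=\langle T^{-1}\xi,\phi\rangle$ and $t(v,v)=\|\phi\|^2$, so that \eqref{eq-weak} turns into $|\langle\xi-\lambda T^{-1}\xi,\phi\rangle|\le\varepsilon\|\xi\|\,\|\phi\|$ for every $\phi$. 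Taking the supremum over $\|\phi\|=1$ collapses the form estimate into the clean operator inequality
\[
\big\|\xi-\lambda T^{-1}\xi\big\|\le \varepsilon\|\xi\|.
\]

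Next I would invoke the spectral theorem for $T$. Let $E$ be its spectral measure and $\nu(\cdot)\coloneqq\langle E(\cdot)\xi,\xi\rangle$ the associated scalar measure; it is finite with total mass $\nu(\mathbb R)=\|\xi\|^2$, and it is supported in $\Spec T\subseteq[a,+\infty)$, so that $\mu\ge a>0$ throughout its support. Note that $\xi\ne0$: since $T^{1/2}$ is injective and $u\ne0$, one has $\|\xi\|^2>0$. Functional calculus rewrites the previous inequality as
\[
\int_{\Spec T}\Big(1-\frac{\lambda}{\mu}\Big)^2\,d\nu(\mu)\le \varepsilon^2\int_{\Spec T}d\nu(\mu).
\]
From here I would argue that the set $A=\{\mu>0:\,|1-\lambda/\mu|\le\varepsilon\}$ must have positive $\nu$-measure: otherwise $(1-\lambda/\mu)^2>\varepsilon^2$ would hold $\nu$-almost everywhere and, $\nu$ being nonzero, the last display would be violated with a strict inequality. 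Since $\nu$ is carried by $\Spec T$, this forces $A\cap\Spec T\ne\varnothing$, and I may pick $\mu_0\in\Spec T$ with $|\mu_0-\lambda|\le\varepsilon\mu_0$.

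Finally, the elementary estimate $\mu_0\le\lambda+|\mu_0-\lambda|\le\lambda+\varepsilon\mu_0$ yields $\mu_0\le\lambda/(1-\varepsilon)$, whence
\[
\dist(\lambda,\Spec T)\le|\mu_0-\lambda|\le\varepsilon\mu_0\le\frac{\varepsilon}{1-\varepsilon}\,\lambda,
\]
which is the claim. The only genuinely delicate point is the first paragraph: the hypothesis controls $t(u,v)-\lambda\langle u,v\rangle$ only in terms of the form norm $\sqrt{t(v,v)}$ rather than the ambient norm $\|v\|$, so one cannot directly represent the functional $v\mapsto t(u,v)-\lambda\langle u,v\rangle$ by an element of $\mathcal H$. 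The substitution $v=T^{-1/2}\phi$ is precisely the device that converts this form-bounded functional into a genuinely bounded functional on $\mathcal H$, after which the spectral theorem does the rest; the measure-theoretic extraction of the spectral point $\mu_0$ and the concluding arithmetic are then routine.
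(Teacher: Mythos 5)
Your proposal is correct and follows essentially the same route as the paper: substituting $v=T^{-1/2}\phi$ is exactly the paper's observation that the vectors $\sqrt{T}\,v$ exhaust $\mathcal H$ as $v$ runs through $D(t)$, and both arguments reduce \eqref{eq-weak} to the operator inequality $\big\|(1-\lambda T^{-1})T^{1/2}u\big\|\le\varepsilon\big\|T^{1/2}u\big\|$ before concluding via the spectral theorem and the same elementary arithmetic $\mu_0\le\lambda/(1-\varepsilon)$. The only difference is cosmetic: where the paper directly quotes $\dist\big(1,\Spec(\lambda T^{-1})\big)\le\varepsilon$, you unwind that step through the scalar spectral measure, which is a valid (slightly more detailed) rendering of the same argument.
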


\begin{proof}
Due to the spectral theorem for self-adjoint operators we have $D(t)=D(\sqrt T)$, and for $f,g\in D(t)$ there holds
$t(f,g)=\langle \sqrt T f,\sqrt T g\rangle$. Hence, we can rewrite the inequality \eqref{eq-weak} as
\[
\Big|
\langle \sqrt T u , \sqrt T v\rangle-\lambda \big\langle T^{-1} \sqrt Tu, \sqrt T v\big \rangle 
\Big|
\le \varepsilon\big\|\sqrt T u\big\|\cdot \big\|\sqrt T v\big\|,
\] 
and,
\begin{equation}
     \label{eq-weak2}
\bigg|
\Big\langle
(1-\lambda T^{-1}) \sqrt T u, \dfrac{\sqrt T v}{\|\sqrt T v\|} \Big\rangle
\bigg| \le \varepsilon \big\|\sqrt T u\big\|
\text{ for all } v\in D(t), \quad v\ne 0.
\end{equation}
As the vectors $\sqrt T v$ cover  the whole of $\mathcal  H$ as $v$ runs through $D(t)$,
taking the supremum over $v$ in \eqref{eq-weak2} gives
$\big\|(1-\lambda T^{-1}) \sqrt T u
\big\|
\le \varepsilon \big\|\sqrt T u\big\|$.
As $\sqrt T u\ne 0$, by the spectral theorem
for self-adjoint operators
we have $\dist\big(1,\Spec (\lambda T^{-1})\big)\le \varepsilon$, which means
that there exists $\mu\in \Spec T\subset[a,+\infty)$ such that $|1-\lambda \mu^{-1}|\le\varepsilon$.
Hence, $|\mu-\lambda|\le \varepsilon \mu$.  In particular,
$\mu-\lambda\le \varepsilon \mu$ and $\mu\le \lambda/(1-\varepsilon)$, which concludes the proof.
\end{proof}

\section{Applications to $\delta$-interactions on star graphs}\label{sec6}

We say that a subset $\Gamma\subset\RR^2$ is a \emph{star graph}
if it can be obtained as the union of finitely many rays starting at
the origin. Let $(r,\theta)$ be the polar coordinate systemcentered at the origin,
then $\Gamma$ can be identified with a family $(\theta_1,\dots,\theta_M)$ with
$0\le \theta_1<\dots<\theta_M<2\pi$ by $\Gamma:=\bigcup_{j=1}^M \big\{(r,\theta):\theta=\theta_j,\, r\ge 0\big\}$.

By the $\delta$-interaction of strength $\gamma>0$ one means
the Schr\"odinger operator formally written as $Q_{\Gamma,\gamma}=-\Delta-\gamma \delta_\Gamma$,
where $\delta_\Gamma$ is the Dirac $\delta$-distribution supported by $\Gamma$,
which is defined as the unique self-adjoint operator in $L^2(\RR^2)$ associated with
the closed lower semibounded sesqualinear form
\[
q_{\Gamma,\gamma}(u,u)=\int_{\RR^2} |\nabla u|^2dx - \gamma \int_\Gamma |u|^2 ds,
\quad u\in H^1(\RR^2),
\]
where $ds$ is the one-dimensional Hausdorff measure.
It seems that the operators of the above type were
first analyzed in~\cite{en1,en2}, and they are used to model to so-called
leaky quantum graphs, see e.g. the review \cite{exrev}.
The basic spectral properties of $Q_{\Gamma,\gamma}$ are well known:
the essential spectrum is equal to $[-\gamma^2/4,+\infty)$ and the discrete spectrum is non-empty
except for the degenerate cases $M=1$ and for $M=2$ with $\theta_2-\theta_1=\pi$, see \cite{ei,p17}.
On the other hand, the finiteness of the discrete spectrum was not discussed, except in some obvious
cases derived from the separation of variables. Using the above estimates obtained for Robin Laplacians
we are able to prove the following result for the star graphs:

\begin{theorem}
For any $M\ge 1$ and  $(\theta_1,\dots,\theta_M)$ with
$0\le \theta_1<\dots<\theta_M<2\pi$, the discrete spectrum of the associated operator $Q_{\Gamma,\gamma}$
is finite. Furthermore, if $M=2$ and $\pi/3\le (\theta_2-\theta_1)\mod 2\pi <\pi$, then the discrete spectrum consists
of a single eigenvalue.
\end{theorem}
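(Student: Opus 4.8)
The plan is to reduce the analysis of $Q_{\Gamma,\gamma}$ to that of the Robin Laplacians in sectors studied above, by decoupling the plane along the rays of $\Gamma$ and distributing the $\delta$-strength symmetrically. Assume first $M\ge 2$, so that the rays cut $\RR^2$ into $M$ open sectors $S_1,\dots,S_M$ of openings $\beta_1,\dots,\beta_M$ with $\sum_j\beta_j=2\pi$; the case $M=1$ is precisely the degenerate one for which the discrete spectrum is already known to be empty, see \cite{ei,p17}, so that finiteness is trivial there.

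The key observation is a decoupling (Neumann-type bracketing) argument. Each ray is shared by exactly two adjacent sectors, so on $H^1(\RR^2)$ the continuity of the traces gives
\[
q_{\Gamma,\gamma}(u,u)=\sum_{j=1}^M\Big(\int_{S_j}|\nabla u|^2\,dx-\tfrac\gamma2\int_{\partial S_j}|u|^2\,ds\Big),
\]
since the weight $\gamma$ carried by each ray is split as $\tfrac\gamma2+\tfrac\gamma2$ between its two sides. The right-hand side is the sum of the Robin forms of parameter $\gamma/2$ over the sectors $S_j$, and it is defined on the larger domain $\bigoplus_j H^1(S_j)\supset H^1(\RR^2)$. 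Hence, by the min-max principle, $\bigoplus_j T^{\gamma/2}_{S_j}\le Q_{\Gamma,\gamma}$, where $T^{\gamma/2}_{S_j}$ denotes the Robin Laplacian of parameter $\gamma/2$ on $S_j$, and therefore $N(Q_{\Gamma,\gamma},\lambda)\le \sum_{j} N(T^{\gamma/2}_{S_j},\lambda)$ for all $\lambda$. Note that $\Specess T^{\gamma/2}_{S_j}=[-\gamma^2/4,+\infty)$ coincides with $\Specess Q_{\Gamma,\gamma}$, so the relevant threshold is $\lambda=-\gamma^2/4$ throughout. By the dilation invariance, each $T^{\gamma/2}_{S_j}$ is unitarily equivalent to $(\gamma/2)^2 T_{\alpha_j}$ with half-opening $\alpha_j=\beta_j/2$, whence $N(T^{\gamma/2}_{S_j},-\gamma^2/4)=N(T_{\alpha_j},-1)$ is just the number of discrete eigenvalues of $T_{\alpha_j}$.

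Finiteness then follows at once: for a sector with $\beta_j<\pi$, i.e. $\alpha_j<\tfrac\pi2$, Theorem~\ref{thm31} gives $N(T_{\alpha_j},-1)<+\infty$, while for $\beta_j\ge\pi$, i.e. $\alpha_j\ge\tfrac\pi2$, Theorem~\ref{SPEC}(c) gives $N(T_{\alpha_j},-1)=0$; summing over the finitely many sectors yields $N(Q_{\Gamma,\gamma},-\gamma^2/4)<+\infty$. For the second assertion, let $M=2$ and $\beta:=(\theta_2-\theta_1)\bmod 2\pi\in[\tfrac\pi3,\pi)$. The two sectors have openings $\beta$ and $2\pi-\beta$, i.e. half-openings $\alpha_1=\beta/2\in[\tfrac\pi6,\tfrac\pi2)$ and $\alpha_2=\pi-\beta/2\in(\tfrac\pi2,\tfrac{5\pi}6]$. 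By Theorem~\ref{SPEC}(c) the wide sector contributes nothing, $N(T_{\alpha_2},-1)=0$, while by Theorem~\ref{thmpi6} the narrow one contributes exactly one, $N(T_{\alpha_1},-1)=1$; the bracketing gives $N(Q_{\Gamma,\gamma},-\gamma^2/4)\le 1$. As this configuration is non-degenerate, the discrete spectrum is non-empty by \cite{ei,p17}, so it consists of exactly one eigenvalue.

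The only genuinely delicate step is the bracketing inequality itself: one must verify that on $H^1(\RR^2)$ the $\delta$-form decomposes, with the stated symmetric splitting of the strength, as the restriction of the direct sum of the sectorial Robin forms, and that passing to the larger form domain $\bigoplus_j H^1(S_j)$ lowers the Rayleigh quotients in the sense of min-max. Everything else is bookkeeping with the scaling $T^{\gamma/2}_{S_j}\cong(\gamma/2)^2T_{\alpha_j}$ and with the coincidence of the essential-spectrum thresholds at $-\gamma^2/4$.
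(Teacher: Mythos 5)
Your proposal is correct and follows essentially the same route as the paper: decoupling $q_{\Gamma,\gamma}$ along the rays into sectorial Robin forms with coefficient $\gamma/2$, applying the min-max bracketing $N(Q_{\Gamma,\gamma},-\gamma^2/4)\le\sum_j N(T^{\gamma/2}_{S_j},-\gamma^2/4)$, reducing by dilation to $N(T_{\alpha_j},-1)$, and invoking Theorem~\ref{thm31}, Theorem~\ref{SPEC}(c) and Theorem~\ref{thmpi6} together with the known non-emptiness from \cite{ei,p17}. The handling of the degenerate case $M=1$ and the half-opening bookkeeping for $M=2$ also match the paper's argument.
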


\begin{proof}
Recall that by $N(A,\lambda)$ we denote the number of discrete eigenvalues, counting multiplicities, of a self-adjoint operator $A$
in $(-\infty,\lambda)$. Therefore, we need to show that $N(Q_{\Gamma,\gamma},-\gamma^2/4)<\infty$.

The proof is by comparison with Robin Laplacians in sectors. As noted above, the result trivially holds for $M=1$.
Assume that $M\ge 2$ and consider the $M$ infinite sectors $V_j$ given in the polar coordinates $(r,\theta)$
by
\[
V_j=\{ (r,\theta): \theta\in (\theta_j,\theta_{j+1}),\, r>0 \},
\quad
j=1,\dots,M,\quad \theta_{M+1}:=2\pi+\theta_1.
\]
Define sesquilinear forms $q_j$ in $L^2(V_j)$ by
\[
q_j(u_j,u_j):=\int_{V_j} |\nabla u_j|^2dx - \dfrac{\gamma}{2} \int_{\partial V_j} |u_j|^2ds,
\quad u_j\in H^1(V_j),
\]
and let $Q_j$ be the associated self-adjoint operators in $L^2(V_j)$, which are in fact the Robin Laplacians in $V_j$
with the Robin coefficient $\gamma/2$. The self-adjoint operator $Q$ in $\bigoplus_{j=1}^M L^2(V_j)\simeq L^2(\RR^2)$
associated with the sesquilinear form
\[
q\big((u_1,\dots u_M),(u_1,\dots u_M)\big)=\sum_{j=1}^M q_j(u_j,u_j), \quad u_j \in H^1(V_j)
\]
is then represented as $Q=Q_1\oplus\dots\oplus Q_M$. As $Q_j$ is unitary equivalent to $T^{\gamma/2}_{(\theta_{j+1}-\theta_j)/2}$,
its essential spectrum is $[-\gamma^2/4,+\infty)$, and $N(Q_j,-\gamma^2/4)<\infty$, as follows
from respectively Theorem~\ref{SPEC} and Theorem~\ref{thm31}.
Now remark that the restriction $u_j$ of any $u \in H^1(\RR^2)$ to any $V_j$
belongs to $D(q_j)$, and one has the equality
\[
q_{\Gamma,\gamma}(u,u)=q\big((u_1,\dots u_M),(u_1,\dots u_M)\big), \quad u\in D(q_{\Gamma,\gamma}).
\]
By the min-max principle it follows that
\begin{equation}
  \label{eq-n4}
N(Q_{\Gamma,\gamma}, -\gamma^2/4)\le \sum_{j=1}^M N(Q_j, -\gamma^2/4).
\end{equation}
As each summand on the right-hand side is finite, the first part of the assertion follows.

It remains to prove the second part of the assertion. Remark that, as mentioned above, we know already that
$N(Q_{\Gamma,\gamma},-\gamma^2/4)\ge 1$, so we only need to prove the reverse inequality.
By applying a suitable rotation we may assume that $\pi/3\le \theta_2-\theta_1 <\pi$,
then we have $N(Q_1,-\gamma^2/4)=1$ by Theorem~\ref{thmpi6} and $N(Q_2,-\gamma^2/4)=0$ by Theorem~\ref{SPEC}(c),
and the substitution into \eqref{eq-n4} gives the result.
\end{proof}

\end{document}